\newcommand{\T}{\mathcal{T}}
\newcommand{\Ttilde}{\widetilde{\mathcal{T}}}
\newcommand{\calI}{\mathcal{I}}
\newcommand{\calP}{\mathcal{P}}
\newcommand{\calR}{\mathcal{R}}
\newcommand{\bfb}{\boldsymbol{b}}
\newcommand{\bigOh}{\mathcal{O}}
\newcommand{\bfv}{\boldsymbol{v}}
\newcommand{\bfw}{\boldsymbol{w}}
\newcommand{\bfH}{\boldsymbol{H}}
\newcommand{\bfu}{\boldsymbol{u}}
\newcommand{\bfn}{\boldsymbol{n}}
\newcommand{\bfe}{\boldsymbol{e}}
\newcommand{\bfr}{\boldsymbol{r}}
\newcommand{\bfmu}{\boldsymbol{\mu}}
\newcommand{\bfh}{\boldsymbol{h}}
\newcommand{\bfs}{\boldsymbol{s}}
\newcommand{\bfl}{\boldsymbol{l}}
\newcommand{\bfB}{\boldsymbol{B}}
\newcommand{\bfdelta}{\boldsymbol{\delta}}
\newcommand{\bflambda}{\boldsymbol{\lambda}}
\newcommand{\bfones}{\boldsymbol{1}}
\newcommand{\eps}{\varepsilon}
\newcommand{\bfbeta}{\boldsymbol{\beta}}
\newcommand{\bfeps}{\boldsymbol{\eps}}
\newcommand{\Expect}{\mathbb{E}}
\newcommand{\Var}{\mathbb{V}}
\DeclareMathOperator{\Res}{Res}
\DeclareMathOperator{\lcm}{lcm}
\newcommand{\integers}{\mathbb{Z}}
\newcommand{\lmin}{l_{\mathrm{min}}}
\newcommand{\reals}{\mathbb{R}}
\newcommand{\naturals}{\mathbb{N}}
\newcommand{\complexes}{\mathbb{C}}
\renewcommand{\MR}[1]{}
\newcommand{\floor}[1]{\lfloor #1\rfloor}
\newcommand{\fpart}[1]{\{ #1\}}
\newtheorem{theorem}{Theorem}
\newtheorem{lemma}{Lemma}[section]
\newtheorem{corollary}[lemma]{Corollary}
\newtheorem{statement}[lemma]{Statement}
\theoremstyle{definition}
\newtheorem{definition}[lemma]{Definition}
\theoremstyle{remark}
\newtheorem{example}[lemma]{Example}
\newtheorem{remark}[lemma]{Remark}
\newif{\ifdetails}
\newif{\ifoverfulboxhacks}
\newcommand{\OBHnotag}{\notag}
\newenvironment{equationaligned}{\align}{\endalign}
\newcommand{\OBHnotag}{}
\newenvironment{equationaligned}{\equation\aligned}{\endaligned\endequation}
\title[Output sum of transducers]{Output sum of transducers: Limiting distribution and periodic fluctuation}
\author{Clemens Heuberger}
\address{Institut f\"ur Mathematik, Alpen-Adria-Universit\"at Klagenfurt,
  Universit\"atsstra\ss e 65--67, 9020 Klagenfurt, Austria}
\email{clemens.heuberger@aau.at}
\author{Sara  Kropf}
\address{Institut f\"ur Mathematik, Alpen-Adria-Universit\"at Klagenfurt,
  Universit\"atsstra\ss e 65--67, 9020 Klagenfurt, Austria}
\email{sara.kropf@aau.at}
\thanks{The first two authors are supported by the Austrian Science Fund (FWF):
  P~24644-N26.}
\author{Helmut Prodinger}
\address{Department of Mathematical Sciences, Stellenbosch University, 7602 Stellenbosch,
 South Africa}
\email{hproding@sun.ac.za}
\thanks{The third author
  was supported by an incentive grant of the NRF of South Africa.\\ 
  Parts of the
  article were written while Helmut Prodinger was a visitor at
  Alpen-Adria-Universit\"at Klagenfurt and while Sara Kropf was a visitor at
  Stellenbosch University, respectively.}
\thanks{An extended abstract with less general
  Theorems~\ref{thm:asydist}, \ref{thm:fourier} and \ref{thm:recursion} and without proofs appears as~\cite{Heuberger-Kropf-Prodinger:2014:asymp}.}
\keywords{Central limit theorem, periodic fluctuation, Fourier coefficient,
  transducer, automatic sequence, non-differentiability.}
\subjclass[2010]{
60F05;  	
68R15,   	
05A16,   	
68Q45,   	
11M41
}
\begin{document}
\begin{abstract}
As a generalization of the sum of digits function and other digital sequences,
sequences defined as the sum of the output of a
transducer
are asymptotically analyzed. The input of the transducer is a random integer in $[0, N)$. Analogues in higher
dimensions are also considered. Sequences defined by a certain class of
recursions can be written in this framework.

Depending on properties of the transducer, the
main term, the periodic fluctuation and an error term of the
expected value and the variance of this sequence are established. The periodic fluctuation of the expected
value is H\"older continuous and, in many cases, nowhere differentiable. A general formula for
the Fourier coefficients of this periodic function is derived. Furthermore, it
turns out that the sequence is asymptotically normally
distributed for many transducers. As an example, the abelian
complexity function of the paperfolding sequence is analyzed. This sequence has recently been
studied by Madill and Rampersad.
\end{abstract}

\maketitle

\section{Introduction}
\label{sec:in}
Over the last decades, asymptotic properties of digital sequences have been
studied by many authors. The simplest example is the \begin{math}q\end{math}-ary sum of digits,
 see Delange~\cite{Delange:1975:chiffres}. This has been generalized to
various other number systems
(cf.~\cite{Kirschenhofer:1983:subbl}, \cite{Kirschenhofer-Prodinger:1984},
\cite{Thuswaldner:1999},
\cite{Grabner-Thuswaldner:2000:sum-of-digits-negative},
\cite{Barat-Grabner:2001:distr},
\cite{Grabner-Heuberger-Prodinger:2003:subbl},
\cite{Grabner-Heuberger-Prodinger:2004:distr-results-pairs},
\cite{Heuberger-Muir:2007:minim-weigh}, \cite{Heuberger-Kropf:2013:analy}). Similar results have been
obtained for other digital sequences (cf.~\cite{Cateland:digital-seq} and
\cite{Bassily-Katai:1995:distr}). Frequently observed phenomena in the
asymptotic analysis of these sequences include periodic fluctuations in the
second order term and asymptotic normality (see
also~\cite{Drmota-Grabner:2010}).

The purpose of this article is to use finite state machines as a uniform
framework to derive such asymptotic results. The results mentioned above will
follow as corollaries from our main results, see the end of the introduction
for more details. As an example of a new result fitting into this framework, we
study the abelian complexity function of the paperfolding sequence (cf.\
\cite{Madill-Rampersad:2013}), see Example~\ref{ex:paperfolding}.

Our main focus lies on transducers: these finite state machines transform input
words to output words using a finite memory (see Section~\ref{sec:pre} for a more precise
definition). In our case, the input is the \begin{math}q\end{math}-ary digit expansion of a random
integer in the interval \begin{math}[0,N)\end{math}. We then asymptotically study the sum of the
output of the transducer for \begin{math}N\to\infty\end{math}.  This is also extended to higher
dimensions.

While some of the examples can easily be formulated by transducers,
other examples are more readily expressed in terms of recursions of the shape
\begin{equation}\label{eq:recursion}a(q^{\kappa}n+\lambda)=a(q^{\kappa_{\lambda}}n+r_{\lambda})+t_{\lambda}\quad\text{
    for }\quad 0\leq\lambda<q^{\kappa}\end{equation}
 with fixed \begin{math}\kappa\end{math}, \begin{math}\kappa_{\lambda}\end{math}, \begin{math}r_{\lambda}\in\integers\end{math},
 \begin{math}t_{\lambda}\in\reals\end{math} and
 \begin{math}\kappa_{\lambda}<\kappa\end{math}. We transform such a recursion into
 a transducer in Theorem~\ref{thm:recursion} in
Section~\ref{sec:rec}.

Several notions abstracting the sum-of-digits and related problems have been
studied. One of them is the notion of completely \begin{math}q\end{math}-additive functions
 \begin{math}a:\naturals_{0}\rightarrow\reals\end{math} with
\begin{equation*}
a(qn+\lambda)=a(n)+a(\lambda)
\end{equation*}
for \begin{math}0\leq \lambda<q\end{math}
(cf.~\cite{Bassily-Katai:1995:distr}). These have been generalized to digital
sequences as defined
in~\cite{Allouche-Shallit:2003:autom,Cateland:digital-seq}: A sequence \begin{math}a(n)\end{math}
is a digital sequence if it can be represented as a sum $\sum_w f(w)$ where $f$ is a given
function and $w$ runs over all windows of a fixed length \begin{math}\kappa\end{math}
of the \begin{math}q\end{math}-ary digit representation of \begin{math}n\end{math}. These digital sequences can easily be formulated by a recursion as
in \eqref{eq:recursion}.

For a transducer \begin{math}\T\end{math}, let \begin{math}\T(n)\end{math} be the sum of the output labels of \begin{math}\T\end{math} when reading the \begin{math}q\end{math}-ary
expansion of \begin{math}n\end{math}. For a positive integer \begin{math}N\end{math}, we study the behavior
of \begin{math}\T(n)\end{math} for a uniformly chosen random \begin{math}n\end{math} in \begin{math}\{0, \ldots, N-1\}\end{math}. Assuming
suitable connectivity properties of the underlying graph of the transducer, we obtain
the following results.
\begin{itemize}
\item The expected value is given by 
\begin{equation*}
\Expect(\T(n))=e_{\T}\log_{q}N+\Psi_{1}(\log_{q}N)+o(1)
\end{equation*}
 for a constant \begin{math}e_{\T}\end{math} and a periodic, continuous function \begin{math}\Psi_1\end{math} (Theorem~\ref{thm:asydist}).
\item The variance is
  \begin{equation*}
\Var(\T(n))=v_{\T}\log_{q}N-\Psi_{1}^{2}(\log_{q}N)+\Psi_{2}(\log_{q}N)+o(1)\end{equation*}
with constant \begin{math}v_{\T}\end{math} and a periodic, continuous
function \begin{math}\Psi_{2}(x)\end{math} (Theorem~\ref{thm:asydist}). 
\item After suitable renormalization, \begin{math}\T(n)\end{math} is asymptotically normally distributed (Theorem~\ref{thm:asydist}).
\item The Fourier coefficients of \begin{math}\Psi_1\end{math} are given
  explicitly in Theorem~\ref{thm:fourier} and the Fourier series converges
  absolutely and uniformly.
\item The function \begin{math}\Psi_1\end{math} is nowhere differentiable provided that \begin{math}e_{\T}\end{math} is
  not an integer (Theorem~\ref{thm:nondiff}).
\end{itemize}
The exact assumptions for the various results are given in detail in the
respective theorems. Results for higher dimensional input are available for
expectation, variance, normal distribution and Fourier coefficients.

Our theorems are generalizations of the following known results.
\begin{itemize}
\item For the sum of digits of the standard \begin{math}q\end{math}-ary digit representations
  (cf.~\cite{Delange:1975:chiffres}), we obtain an asymptotic normal
  distribution, the Fourier coefficients and the non-differentiability (for
  even\footnote{Our approach in Theorem~\ref{thm:nondiff} requires that the constant $e_{\T}$ of the main term
    of the expected
    value is not an integer. In this case, 
    $e_{\T}=\frac{q-1}{2}$, which is an integer if $q$ is odd.} \begin{math}q\end{math}). The
  error term vanishes, as stated in
  Remark~\ref{rem:vanishingerror}. Therefore, the formula is not only
  asymptotic but also exact. The formulas for the Fourier coefficients
  by Delange~\cite{Delange:1975:chiffres} also follow from our Theorem~\ref{thm:fourier}.
\item The occurrence of subblocks in standard and non-standard digit
  representations is defined by a strongly connected, aperiodic
  transducer. Thus we obtain the expected value, the variance, the limit law
  and the Fourier coefficients
  (cf.~\cite{Kirschenhofer:1983:subbl,Kirschenhofer-Prodinger:1984,Grabner-Heuberger-Prodinger:2003:subbl}
  for the expected value). For one dimensional digit representations, we also
  obtain the  non-differentiability (assuming
  \begin{math}e_{\T}\neq 0,1\end{math}) of the fluctuation in the expectation.
\item The Hamming weight is a special case of the occurrence of
  subblocks. Thus, Theorem~\ref{thm:asydist} is a generalization of the
  results about the width-\begin{math}w\end{math} non-adjacent
  form~\cite{Heuberger-Kropf:2013:analy}, the simple joint
  sparse form \cite{Grabner-Heuberger-Prodinger:2004:distr-results-pairs} and the
  asymmetric joint sparse form~\cite{Heuberger-Kropf:2013:analy}.
\item A transducer defining a completely \begin{math}q\end{math}-additive function consists of
  only one state. Therefore, we obtain an asymptotic normal distribution (as 
  in~\cite{Bassily-Katai:1995:distr}), the
  Fourier coefficients and the non-differentiability (assuming
  \begin{math}e_{\T}\not\in\integers\end{math} and integer output). Here, the
  error term vanishes, too.
\item A digital sequence is defined by a strongly connected, aperiodic
  transducer. Thus, digital sequences are asymptotically normally
  distributed or degenerate. Assuming \begin{math}e_{\T}\not\in\integers\end{math} and
  integer output, the periodic
  fluctuation \begin{math}\Psi_{1}(x)\end{math} is non-differentiable. The Fourier coefficients can be computed
  by Theorem~\ref{thm:fourier}. See also~\cite{Cateland:digital-seq} for results
  on the expected value.
\item Automatic sequences \cite{Allouche-Shallit:2003:autom} are also defined
  by transducers: The output labels of all transitions
  are \begin{math}0\end{math} and the final output labels are as in the
  definition of such sequences. Theorem~\ref{thm:asydist} gives the expected
  value with \begin{math}e_{\T}=0\end{math} (see also~\cite{Peter:2003}) and, depending on the transducer,
  also the variance with \begin{math}v_{\T}=0\end{math}. The Fourier
  coefficients of the periodic
  fluctuation of the expected value are given explicitly in Theorem~\ref{thm:fourier}.
\item In~\cite{Grabner-Thuswaldner:2000:sum-of-digits-negative}, Grabner and
Thuswaldner investigate the sum of digits function for negative bases
\begin{math}s_{-q}(n)\end{math}. They give a transducer to
  compute the function \begin{math}s_{-q}(n)-s_{-q}(-n)\end{math}. Their result about the limit law follows
  directly from our Theorem~\ref{thm:asydist}.
\end{itemize}

As an example of a new result obtained by Theorem~\ref{thm:asydist}, we give an
asymptotic estimate of the abelian complexity function of the paperfolding
sequence in Example~\ref{ex:paperfolding}. In~\cite{Madill-Rampersad:2013},
the authors prove that this sequence satisfies a recursion of
type~(\ref{eq:recursion}). As consequences of Theorem~\ref{thm:asydist}, the expected value  is \begin{math}\sim\frac 8{13}\log_{2}N\end{math},
the variance is \begin{math}\sim\frac{432}{2197}\log_{2}N\end{math} and the sequence is
asymptotically normally distributed.

In the sequel, we discuss the relation of our setting and our results with
the notion of \begin{math}q\end{math}-regular sequences introduced
in~\cite{Allouche-Shallit:2003:autom}.

A sequence is
\begin{math}q\end{math}-regular if it is the first coordinate of a vector \begin{math}\bfv(n)\end{math} and there
exist matrices \begin{math}V_{0}\end{math}, \dots, \begin{math}V_{q-1}\end{math} such that
\begin{equation}\label{eq:q-reg}
\bfv(qn+\eps)=V_{\eps}\bfv(n)\end{equation}
for \begin{math}\eps\in\{0,1,\ldots,q-1\}\end{math}.

The concept of  \begin{math}q\end{math}-regular sequences is more general than
our setting, but a broader variety of asymptotic behavior is observed which
precludes any generalization of our results to general \begin{math}q\end{math}-regular sequences.

While \begin{math}\T(n)\end{math} is a \begin{math}q\end{math}-regular
sequence for any transducer \begin{math}\T\end{math} (see Remark~\ref{rem:q-reg}), the converse
is not necessarily true: Obviously, the sum of the output of a
transducer reading the input  \begin{math}n\end{math} is always
bounded by \begin{math}\bigOh(\log n)\end{math}. However,
the \begin{math}2\end{math}-regular sequence\footnote{Use $\bfv(0)
  =(0,1)^{\top}$ (where ${}^{\top}$ denotes transposition), $V_{0}=\big(
\begin{smallmatrix}
  2&0\\0&1
\end{smallmatrix}\big)
$ and $V_{1}=
\big(\begin{smallmatrix}
  0&1\\0&0
\end{smallmatrix}\big)
$.}
\begin{equation*}
a(n)=\begin{cases}n&\text{if } n\text{ is a power of }2,\\0&\text{otherwise}\end{cases}
\end{equation*}
can clearly not be bounded by \begin{math}\bigOh(\log n)\end{math}.

Asymptotic estimates for \begin{math}q\end{math}-regular sequences are given by
Dumas~\cite{Dumas:2013:joint,Dumas:2014:asymp}. 
By restricting our attention to sequences
defined by transducers, we obtain an asymptotic estimate of the variance,
explicit expressions for the Fourier coefficients of the fluctuation in the
second term of the expected value, non-differentiability of this fluctuation as 
well as a central limit theorem.

Section~\ref{sec:pre} contains all the theorems and the required notions.
 In Section~\ref{sec:moments-limit-distr}, Theorem~\ref{thm:asydist}, 
 formulas for the first and second moment of the output sum of a transducer and
 its limiting distribution are presented. In Theorem~\ref{thm:fourier} in
Section~\ref{sec:fou}, the Fourier coefficients of the periodic
fluctuation \begin{math}\Psi_{1}(x)\end{math} of the expected value are stated. We discuss the non-differentiability of \begin{math}\Psi_{1}(x)\end{math} in
Theorem~\ref{thm:nondiff} in Section~\ref{sec:nondiff}.

Section~\ref{sec:rec} deals with sequences satisfying the
recursion~\eqref{eq:recursion} and higher dimensional analogues. We construct
a transducer computing this sequence in Theorem~\ref{thm:recursion}. Thus, from
Theorem~\ref{thm:asydist}, the expected value, the variance and the limit
distribution follow in many cases. 

This construction and the computations for the constants
\begin{math}e_{\T}\end{math}, \begin{math}v_{\T}\end{math} and the Fourier
coefficients can be done algorithmically by the
mathematical software system
Sage~\cite{Stein-others:2014:sage-mathem-6.4.1}: The general framework is
included in Sage version 6.4.1 using its
   finite state machine package described
  in~\cite{Heuberger-Krenn-Kropf:ta:finit-state}. The code for the
  Fourier coefficients and the construction from a recursion is submitted for
  inclusion in future versions of Sage, see
  \url{http://trac.sagemath.org/17222} and \url{http://trac.sagemath.org/17221}, respectively.

In Sections~\ref{sec:distribution} to~\ref{sec:recursion-proof}, we
give the proofs of all the theorems from Section~\ref{sec:pre}.

\section{Results}\label{sec:pre}

This section starts with the definition of some notions about the connectivity of a transducer. Then we
will state the theorems about the moments and the limiting distribution, the
Fourier coefficients, the non-differentiability, and the construction of a
transducer computing a sequence given by a recursion as in~\eqref{eq:recursion}. 

\subsection{Notions}
We consider complete, deterministic and subsequential
transducers (cf.~\cite[Chapter 1]{Berthe-Rigo:2010:combin}). In our case,  the
input alphabet is \begin{math}\{0,\ldots,q-1\}^{d}\end{math}
for a positive integer \begin{math}d\end{math} and the output alphabet
\begin{math}\reals\end{math}. A transducer is said
to be
\emph{deterministic} and \emph{complete} if for every state and every digit of the input alphabet, there
is exactly one transition starting in this state with this input label. A
\emph{subsequential} transducer \begin{math}\T\end{math} (cf.~\cite{Schuetzenberger:1977}) is defined to be a finite deterministic automaton with one
initial state, an output label for
every transition and a final output label for every state. 

\begin{figure}
  \centering
  \begin{tikzpicture}[auto, initial text=, >=latex, accepting text=, accepting/.style=accepting by arrow, accepting distance=5ex, every state/.style={minimum
      size=1.3em}]
    \node[state, initial] (v0) at (0.000000,
    0.000000) {};
    \path[->] (v0.270.00) edge node[rotate=450.00, anchor=south] {$0$} ++(270.00:5ex);
    \node[state] (v1) at (3.000000, 0.000000) {};
    \path[->] (v1.270.00) edge node[rotate=450.00, anchor=south] {$1$} ++(270.00:5ex);
    \node[state] (v2) at (6.000000, 0.000000) {};
    \path[->] (v2.270.00) edge node[rotate=450.00, anchor=south] {$1$} ++(270.00:5ex);
    \path[->] (v0.10.00) edge node[rotate=0.00, anchor=south] {$1\mid 0$} (v1.170.00);
    \path[->] (v1.10.00) edge node[rotate=0.00, anchor=south] {$1\mid 1$} (v2.170.00);
    \path[->] (v0) edge[loop above] node {$0\mid 0$} ();
    \path[->] (v2.190.00) edge node[rotate=360.00, anchor=north] {$0\mid 0$} (v1.350.00);
    \path[->] (v2) edge[loop above] node {$1\mid 0$} ();
    \path[->] (v1.190.00) edge node[rotate=360.00, anchor=north] {$0\mid 1$} (v0.350.00);
  \end{tikzpicture}
  \caption{Transducer computing the Hamming weight of the non-adjacent form.}
  \label{fig:naf}
\end{figure}
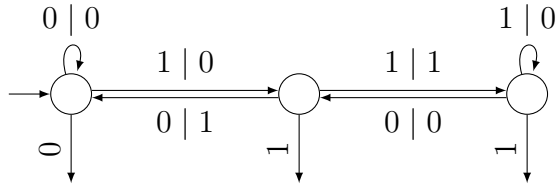

Figure~\ref{fig:naf} presents an example of a complete, deterministic, subsequential
transducer. The label of a transition with input \begin{math}\eps\end{math} and output \begin{math}\delta\end{math} is
written as \begin{math}\eps\mid\delta\end{math}.

The input of the
transducer is the standard \begin{math}q\end{math}-ary joint digit
representation of an integer
vector \begin{math}\bfn\in\naturals_{0}^{d}\end{math}, i.e.\ the standard \begin{math}q\end{math}-ary digit representation at each
coordinate of the vector \begin{math}\bfn\end{math}. The input is read from
right (least significant digit) to left (most significant digit), without
leading zeros.
Then the
output of the transducer is the sequence of the outputs of the transitions along
the unique path starting in the initial state with the given input  and the final output of the last state of this path. The element
\begin{math}\T(\bfn)\end{math} of the sequence defined by the transducer \begin{math}\T\end{math} is the sum of this
output sequence.

Using final output labels is convenient for our purposes. Clearly, it would
also be possible to model the final output labels by using an ``end-of-input'' marker
and additional transitions. In the context of digital expansions, the behavior
can usually also be obtained by reading a sufficient number of leading
zeros. But the approach using final outputs is more general as it is not
required that the final outputs are compatible with the output generated by
leading zeros.

For the various results, different properties of the complete,
deterministic, subsequential transducer and its underlying digraph are needed. All states
of the underlying digraph are
assumed to be
accessible from the initial state. Contracting each strongly connected component of
the underlying digraph gives an acyclic digraph, the so-called condensation. A strongly connected
component is said to be \emph{final strongly connected} if it corresponds to a
leaf (i.e., a vertex with outdegree \begin{math}0\end{math})
in the condensation. Let \begin{math}c\end{math} be the number of final strongly connected components. We call a transducer or a
digraph \emph{finally connected} if \begin{math}c=1\end{math}.

For the asymptotic expressions, only the final strongly connected components
are important. All other strongly connected components only influence the
error term. Thus, we are not interested in the periodicity of the whole
underlying digraph, but in the periodicity of the final strongly connected
components. The \emph{period} of a digraph is defined as the greatest common divisor
of all lengths of directed cycles of the digraph. For \begin{math}j=1,\ldots, c\end{math}, let \begin{math}p_{j}\end{math}
be the period of the final strongly connected
component \begin{math}C_{j}\end{math}. Define the \emph{final
  period} of the digraph as
\begin{equation*}
p=\lcm\{p_{j}\mid j=1,\ldots,c\}.
\end{equation*}
We call a digraph
\emph{finally aperiodic} if \begin{math}p=1\end{math}. If the underlying
digraph is strongly connected, its final period is equal to its period.

For proving the non-differentiability of the fluctuation, we not only need a
finally aperiodic, finally connected digraph (\begin{math}p=c=1\end{math}), but also a reset sequence. A \emph{reset
sequence} is an input sequence such that starting at any state and reading this
sequence leads to a specific state \begin{math}s\end{math}. If the transducer is not finally
aperiodic and finally connected, then there cannot exist a reset sequence.

\subsection{Moments and Limiting Distribution}\label{sec:moments-limit-distr}
This section contains the theorem about the moments of the output sum $\T(\bfn)$
and the limiting distribution. Further results about the periodic fluctuation
can be found in Theorems~\ref{thm:fourier} and~\ref{thm:nondiff}.

As probability space, we
use \begin{math}\Omega_{N}=\{0,1,\ldots,N-1\}^{d}\end{math} endowed with the
equidistribution measure.

Denote by $\Phi_{\mu,\sigma^{2}}$ the cumulative distribution function of
the normal distribution with mean $\mu$ and variance $\sigma^{2}\neq 0$. Thus, 
\begin{equation*}
\Phi_{\mu,\sigma^{2}}(x)=\frac{1}{\sigma\sqrt{2\pi}}\int_{-\infty}^{x}\exp\Big({-\frac{1}{2}\Big(\frac{y-\mu}{\sigma}\Big)^{2}}\Big)\,dy.
\end{equation*}

\begin{theorem}\label{thm:asydist}
Let \begin{math}d\geq 1\end{math}, \begin{math}\T\end{math} be a complete, deterministic, subsequential transducer with input alphabet
\begin{math}\{0,1,\ldots,q-1\}^{d}\end{math}, 
output alphabet \begin{math}\reals\end{math}, final
period \begin{math}p\end{math}, and $c$ final components.

Then
\begin{math}\T(\bfn)\end{math} has the expected value
\begin{equation}\Expect(\T(\bfn))=e_{\T}\log_{q}N+\Psi_{1}(\log_{q}N)+\bigOh(N^{-\xi}\log
N)\label{eq:expected}\end{equation}
where the constants \begin{math}e_{\T}\end{math} and \begin{math}\xi>0\end{math} are given
in~\eqref{eq:const-of-thm} in Section~\ref{sec:prop-trans-matr}  and \begin{math}\Psi_{1}(x)\end{math} is a \begin{math}p\end{math}-periodic,
H\"{o}lder continuous function.

If all $b_j$ given in \eqref{eq:const-of-thm} are positive,
the distribution function of $\T(\bfn)$ can be approximated by a mixture of $c$
Gaussian distributions with
weights $\lambda_{j}$, means $a_{j}\log_{q}N$ and variances $b_{j}\log_{q}N$
for some constants $a_{j}$ and $\lambda_{j}> 0$ with
$\sum_{j=1}^{c}\lambda_{j}=1$, given in~\eqref{eq:const-of-thm}. In particular,
\begin{equation*}
\mathbb P\biggl(\frac{\T(\bfn)}{\sqrt{\log_{q}N}}\le x\biggr)=\sum_{j=1}^{c}
\lambda_{j}\Phi_{a_{j}\sqrt{\log_{q}N}, b_{j}}(x)+\bigOh\bigl(\log^{-\frac12}N\bigr)
\end{equation*}
for all $x\in\reals$.

If all $a_{j}$ are equal, then \begin{math}\T(\bfn)\end{math} has the variance
\begin{equation}\label{eq:var-good}
\Var(\T(\bfn))=v_{\T}\log_{q}N-\Psi_{1}^{2}(\log_{q}N)+\Psi_{2}(\log_{q}N)+\bigOh(N^{-\xi}\log^{2}N)\end{equation}
with constant \begin{math}v_{\T}\in\reals\end{math} (given in~\eqref{eq:const-of-thm}) and a \begin{math}p\end{math}-periodic, continuous
function \begin{math}\Psi_{2}(x)\end{math}. Otherwise, the variance is
$\Var(\T(\bfn))=\Theta(\log^{2} N)$.

If all $a_{j}$ are equal, $\T(\bfn)$ converges in distribution to a mixture of
Gaussian (or degenerate) distributions with means $0$ and variances $b_j$,
weighted by $\lambda_j$. In particular, if all $b_j>0$,
\begin{equation*}\mathbb
P\biggl(\frac{\T(\bfn)-\Expect(\T(\bfn))}{\sqrt{\log_{q}N}}\le x\biggr)=\sum_{j=1}^{c}\lambda_{j}\Phi_{0,b_{j}}(x)+\bigOh\bigl(\log^{-\frac12}N\bigr)\end{equation*}
holds for all \begin{math}x\in\reals\end{math}.

If furthermore $c=1$ and \begin{math}v_{\T}\neq0\end{math}, then \begin{math}\T(\bfn)\end{math} is asymptotically
normally distributed.
\end{theorem}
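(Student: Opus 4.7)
The plan is to encode the transducer by matrices $M_\varepsilon(u)$ indexed by the states, where the $(s,s')$-entry is $e^{u\delta}$ if there is a transition $s\to s'$ on input $\varepsilon$ with output $\delta$, and zero otherwise. Setting $M(u)=\sum_\varepsilon M_\varepsilon(u)$, the moment generating function of $\T(\bfn)$ averaged over $\bfn\in[0,q^L)^d$ is $q^{-dL}\bfv^\top M(u)^L\bfh(u)$, where $\bfv$ is the initial-state indicator and $\bfh(u)$ encodes the exponentiated final outputs. For general $N$, I would decompose $\{0,\ldots,N-1\}^d$ into complete $q$-adic blocks using the standard digit-by-digit splitting; this writes the moment generating function as a sum of $\bigOh(\log_q N)$ pieces each of the form $q^{-dL_k}\bfv_k^\top M(u)^{L_k}\bfh_k(u)$ to which the same spectral analysis applies.

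The core step is the spectral analysis of $M(0)$. Since each state has $q^d$ outgoing transitions, $M(0)$ has row sums $q^d$, so spectral radius $q^d$. On a final strongly connected component $C_j$, Perron--Frobenius yields a simple Perron eigenvalue $q^d$ together with $p_j-1$ further simple eigenvalues $q^d\zeta$, $\zeta$ a non-trivial $p_j$-th root of unity, while transient states contribute only eigenvalues of strictly smaller modulus. Analytic perturbation theory (Kato) extends the $c$ Perron eigenvalues to analytic branches in a complex neighborhood of $0$ with expansion
\begin{equation*}
\lambda_j(u)=q^d\exp\bigl(a_j u+\tfrac12 b_j u^2+\bigOh(u^3)\bigr),
\end{equation*}
from which $a_j$, $b_j$, $e_\T$, $v_\T$ and the weights $\lambda_j$ of~\eqref{eq:const-of-thm} are read off by applying the corresponding spectral projectors on the left to $\bfv$ and on the right to $\bfh(0)$.

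Substituting this decomposition and differentiating at $u=0$ yields~\eqref{eq:expected} and~\eqref{eq:var-good}: the $c$ Perron branches produce the main terms $e_\T\log_q N$ and $v_\T\log_q N$, the remaining unit-modulus eigenvalues contribute---after summation over the $\bigOh(\log_q N)$ blocks and the fractional part of $\log_q N$---the periodic fluctuations $\Psi_1$ and $\Psi_2$, and the subdominant eigenvalues produce the error $\bigOh(N^{-\xi}\log N)$. The $-\Psi_1^2$ term in the variance is just the squared first moment being subtracted from the second. H\"older continuity of $\Psi_1$ follows from polynomial decay of the Fourier coefficients identified in Theorem~\ref{thm:fourier}.

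For the limit law I would apply Hwang's quasi-power theorem to each branch $\lambda_j(u)$ separately: the contribution of $C_j$ to the distribution function behaves asymptotically as a Gaussian with mean $a_j\log_q N$, variance $b_j\log_q N$ and total mass $\lambda_j$, producing the stated mixture. When all $a_j$ agree (in particular when $c=1$), centering gives the CLT with variance $v_\T\log_q N$; otherwise the between-component shift of means forces $\Var(\T(\bfn))=\Theta(\log^2 N)$. The main obstacle is maintaining uniform control of the perturbed Perron eigenvalues and spectral projectors for $u$ in a complex neighborhood of the imaginary axis---needed for a Berry--Esseen-type rate of $\log^{-1/2}N$---while simultaneously tracking the secondary unit-modulus eigenvalues arising both from $p>1$ and from having $c>1$ final components, and correctly accounting for the transient-state contributions in the digit-by-digit decomposition of $[0,N)^d$.
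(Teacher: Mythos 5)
Your overall strategy matches the paper's: transition matrices with entries $e^{u\delta}$, spectral analysis of $M$ via Perron--Frobenius at $u=0$ and Kato perturbation nearby, moment extraction by differentiation, and a Berry--Esseen argument in the spirit of Hwang's quasi-power theorem. The ``digit-by-digit splitting'' you describe is essentially the paper's recursion $G(qN+\varepsilon)=MG(N)+H_\varepsilon(N)$, solved by Lemma~\ref{lem:rec}, together with the $G_C$-machinery indexed by subsets $C\subseteq\{1,\ldots,d\}$ that is needed for $d\ge 2$.

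Two parts of your sketch do not close, and the first is a genuine gap. Your derivation of H\"older continuity is circular: you claim it ``follows from polynomial decay of the Fourier coefficients identified in Theorem~\ref{thm:fourier},'' but Theorem~\ref{thm:fourier} gives only the trivial $\bigOh(|k|^{-1})$ from the prefactor $1/(d+\chi_k)$; the residues $\Res_{z=d+\chi_k}\bfw_k^\top\bfH(z)$ are not shown to be bounded in $k$, let alone decaying, and an $\bigOh(|k|^{-1})$ bound does not imply H\"older continuity. The paper's logic runs in the opposite direction: Lemma~\ref{lem:hoelder} proves H\"older continuity of every order $\alpha<1$ directly, by a digit-manipulation argument on the explicit representation of Lemma~\ref{lemma:psi_1-explicit}, and only then does the proof of Theorem~\ref{thm:fourier} invoke Bernstein's theorem to upgrade this to absolute convergence of the Fourier series. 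Without an independent proof of H\"older continuity your argument is unsupported. Second, ``applying Hwang's quasi-power theorem to each branch $\lambda_j(u)$ separately'' is only heuristic: the characteristic function of $\T(\bfn)$ does not split into $c$ standalone quasi-powers of respective total masses $\lambda_j$. The paper establishes via \eqref{eq:psi-lambda} that $\Psi_{lj}(x,0)=\lambda_j[l=0]$, which collapses the $p\cdot c$ a priori dominant terms into a genuine Gaussian mixture of weights $\lambda_j$, and then reruns the Berry--Esseen estimate (Section~\ref{sec:clt}) with explicit uniform control of the error term $R(N,t)$. You correctly flag this uniformity as the main obstacle, but your sketch does not supply the argument that resolves it.
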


We give the proof of this theorem in
Section~\ref{sec:distribution}.

\begin{remark}\label{remark:discrete-limit}
  The assumption that $b_j>0$ is essential for obtaining uniform convergence of
  the distribution function and the speed of convergence in particular. To see this,
  consider the transducer in Figure~\ref{fig:discrete-limit}. It is easily seen
  that $\T(n)=(-1)^n$. For even $N$, the distribution function of $\T(n)/\sqrt{\log_2 N}$ is
  given by
  \begin{equation*}
    \mathbb{P}\biggl(\frac{\T(n)}{\sqrt{\log_2 N}}\le x\biggr)=
    \begin{cases}
      0& \text{if }x<-1/\sqrt{\log_2 N},\\
      1/2& \text{if }-1/\sqrt{\log_2 N}\le x<1/\sqrt{\log_2 N},\\
      1& \text{if }1/\sqrt{\log_2 N}\le x,
    \end{cases}
  \end{equation*}
  which does not converge uniformly.

  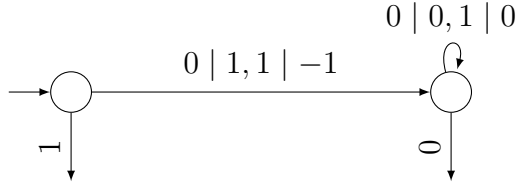
\begin{figure}
    \centering
    \begin{tikzpicture}[auto, initial text=, >=latex, accepting text=,
      accepting/.style=accepting by arrow, accepting distance=5ex, every state/.style={minimum
    size=1.3em}]
      \node[state, initial] (v0) at (0.000000, 0.000000) {};
      \path[->] (v0.270.00) edge node[rotate=450.00, anchor=south] {$1$} ++(270.00:5ex);
      \node[state] (v1) at (5.000000, 0.000000) {};
      \path[->] (v1.270.00) edge node[rotate=450.00, anchor=south] {$0$} ++(270.00:5ex);
      \path[->] (v0) edge node[rotate=0.00, anchor=south] {$0\mid 1, 1\mid -1$} (v1);
      \path[->] (v1) edge[loop above] node {$0\mid 0, 1\mid 0$} ();
    \end{tikzpicture}
    \caption{Transducer for Remark~\ref{remark:discrete-limit}.}
    \label{fig:discrete-limit}
  \end{figure}
\end{remark}

\subsection{Eigenvalues and Eigenvectors of the Transition Matrix}\label{sec:prop-trans-matr}

For the constants in Theorem~\ref{thm:asydist} and the Fourier coefficients in
Theorem~\ref{thm:fourier}, we need the notion of a transition matrix of the
transducer and properties of its eigenvalues and eigenvectors. 

We label the states of the transducer
with contiguous positive integers starting with $1$. We denote the indicator vector
of the initial state by $\bfe_{1}$.

\begin{definition}
Let $t\in\mathbb R$ be in a neighborhood of $0$.

 The transition matrix \begin{math}M_{\bfeps}\end{math} for
\begin{math}\bfeps\in\{0,\ldots,q-1\}^{d}\end{math} is the matrix whose \begin{math}(s_{1},s_{2})\end{math}-th entry
 is \begin{math}e^{it\delta}\end{math} if there is a transition from state
\begin{math}s_{1}\end{math} to state \begin{math}s_{2}\end{math} with input label \begin{math}\bfeps\end{math} and output label \begin{math}\delta\end{math},
and \begin{math}0\end{math} otherwise. 

Let \begin{math}M\end{math} be the sum of all these transition matrices.
\end{definition}

\begin{lemma}\label{lem:eigenvalues-M}There are differentiable functions
  $\mu_{j}(t)$ in a neighborhood of $t=0$ for $j=1$, \dots, $c$ such that
  the dominant eigenvalues of $M$ are
  $\mu_{j}(t)\exp(\frac{2\pi il}{p})$ in this neighborhood of $t=0$ for some
  of the $l\in\calP=\{k\in\integers\mid
  -p/2<k\leq p/2\}$. For each of these dominant eigenvalues, the algebraic and
  geometric multiplicities coincide. For $t=0$, $\mu_{j}(0)=q^{d}$.
\end{lemma}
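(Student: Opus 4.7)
I combine the block structure coming from the strongly connected component (SCC) decomposition with the cyclic block decomposition of each final SCC, and then apply analytic perturbation theory for simple eigenvalues.

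Ordering the states by a topological ordering of the condensation writes $M(t)$ in block upper-triangular form with diagonal blocks $A_S(t):=M(t)|_{S}$ indexed by the SCCs $S$. This ordering depends only on the underlying digraph, hence is valid for every $t$, and the spectrum of $M(t)$ is the union (with algebraic multiplicities) of the spectra of the $A_S(t)$. Completeness and determinism force every row sum of $M(0)$ to equal $q^d$, so the row sums of $A_{C_j}(0)$ are all $q^d$ for every final SCC $C_j$; for a non-final $S$, at least one row sum is strictly smaller, and Perron--Frobenius for irreducible non-negative matrices yields spectral radius strictly less than $q^d$. Consequently the eigenvalues of modulus $q^d$ of $M(0)$ come only from final SCCs. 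Perron--Frobenius applied to the irreducible matrix $A_{C_j}(0)$ of period $p_j$ further shows that these eigenvalues are exactly $q^d e^{2\pi ik/p_j}$ for $k=0,\ldots,p_j-1$, each simple; since $p_j\mid p$, they already have the form $q^d\exp(2\pi il/p)$ with $l\in\calP$.

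To extend this to $t\neq 0$, permute the states of $C_j$ according to their cyclic class $V_0,\dots,V_{p_j-1}$. Because walks of length $p_j$ in the underlying digraph preserve cyclic classes, the matrix $A_{C_j}(t)^{p_j}$ is block diagonal with $p_j$ diagonal blocks, and the $k$-th such block is a cyclic product of the off-diagonal blocks of $A_{C_j}(t)$ (a purely structural fact about the digraph, hence independent of $t$). A standard argument shows that cyclic shifts of a matrix product share the same nonzero spectrum, so all $p_j$ diagonal blocks of $A_{C_j}(t)^{p_j}$ have the same dominant eigenvalue. At $t=0$ this is the simple Perron eigenvalue $q^{dp_j}$, which by analytic perturbation theory for simple eigenvalues extends to a unique analytic function $\nu_j(t)$ near $0$ with $\nu_j(0)=q^{dp_j}$. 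Setting $\mu_j(t):=\nu_j(t)^{1/p_j}$ (principal branch, so $\mu_j(0)=q^d$), every dominant eigenvalue of $A_{C_j}(t)$ is a $p_j$-th root of $\nu_j(t)$ and hence equals $\mu_j(t)\exp(2\pi ik/p_j)=\mu_j(t)\exp(2\pi il/p)$ for $l=kp/p_j\in\calP$; differentiability of $\mu_j$ follows from analyticity of $\nu_j$ together with $\nu_j(0)\neq 0$.

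For the multiplicity statement, note that within each $A_{C_j}(t)$ the $p_j$ branches $\mu_j(t)\exp(2\pi ik/p_j)$ are pairwise distinct for small $t$ (as $\mu_j(t)\ne 0$), so each is a simple eigenvalue of the block, and the corresponding block-eigenvectors lift to eigenvectors of $M(t)$ by solving the triangular system on the non-final part, which is invertible there because the non-final diagonal blocks have spectral radius bounded away from the dominant range by continuity. Eigenvectors coming from different final SCCs are automatically linearly independent, because their supports on the final part are disjoint, so algebraic multiplicities simply add. The main obstacle is the middle step: establishing that the $p_j$ diagonal blocks of $A_{C_j}(t)^{p_j}$ share a common analytic Perron branch, which is what allows us to parametrize all $p_j$ dominant eigenvalues of $A_{C_j}(t)$ by a single function $\mu_j(t)$; once this is in hand, the remaining arguments are standard.
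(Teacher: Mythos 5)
Your proof is correct and follows the same overall strategy as the paper: decompose $M(t)$ by strongly connected components, apply Perron--Frobenius at $t=0$ (to each final component $C_j$ of period $p_j$, and to non-final components to bound their spectral radius below $q^d$), and then exploit the $p_j$-cyclic block structure of $A_{C_j}$ together with analytic perturbation of a simple eigenvalue to parametrize all $p_j$ dominant eigenvalues of $C_j$ by a single analytic branch $\mu_j(t)$. The differences are in execution rather than in substance. For the cyclic-rotation invariance of the dominant eigenvalue set, the paper proves a separate lemma (Lemma~\ref{lem:eigenvalues}) by factorizing $M-xI$ to show the characteristic polynomial has the form $x^{\,\cdots}h(x^{p_j})$; you instead observe directly that $A_{C_j}(t)^{p_j}$ is block diagonal with cyclically shifted matrix products on the diagonal and invoke the fact that $XY$ and $YX$ share the same nonzero spectrum. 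These are two faces of the same argument, but your route is slightly more hands-on and arguably clearer: you perturb the simple Perron eigenvalue $q^{dp_j}$ of a single diagonal block of $A_{C_j}^{p_j}$ and then take a principal $p_j$-th root, whereas the paper perturbs a simple eigenvalue of $A_{C_j}$ directly via Kato. For the multiplicity statement, the paper (in Lemma~\ref{lem:diagonalization}) pads a left eigenvector of $A_{C_j}$ with zeros, which works immediately because final components are sinks; you lift a right eigenvector of $A_{C_j}$ by inverting $(N-\lambda I)$ on the non-final part, which requires the extra observation that $\lambda$ is not an eigenvalue of $N$ and that the components on the other final sinks can be taken as $0$. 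Both give the correct count of independent eigenvectors, and so the two proofs are equivalent in content.
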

The proof of this lemma is given in Section~\ref{sec:distribution}.

Let $l\in\mathbb Z$. Consider the (not necessarily orthogonal) projection onto the direct
sum of the left
eigenspaces of $M$ corresponding to the eigenvalues $\mu_{j}(t)\exp(\frac{2\pi
il}{p})$ for $j=1,\ldots, c$ such that the kernel is the direct sum of the
remaining generalized left eigenspaces.
Let $\bfw_{l}^{\top}(t)$ be the image of $\bfe_{1}^{\top}$ under this
projection, where ${}^{\top}$ denotes transposition. The definition of
$\bfw_{l}^{\top}(t)$ only depends on $l$ modulo $p$.

We write $\bfw_{l}^{\top}$ for $\bfw_{l}^{\top}(0)$ and $\bfw_{l}'^{\top}$ for
the
derivative of $\bfw_{l}^{\top}(t)$ at $t=0$.
 Furthermore,
$\bfw_{l}^{\top}$ is either the null vector or a left eigenvector of $M$
corresponding to the eigenvalue $q^{d}\exp(\frac{2\pi il}{p})$.

Let $C_{j}$ be a final component with corresponding indicator
  vector $\boldsymbol{c}_{j}$. Define the constants
  \begin{equation*}
    \lambda_{j}=\bfw_{0}^{\top}\boldsymbol{c}_{j}.
  \end{equation*}
In Section~\ref{sec:prel-from-lin-alg}, we will show that $\lambda_{j}> 0$ and $\sum_{j=1}^{c}\lambda_{j}=1$.

With these definitions, the constants in
Theorem~\ref{thm:asydist} can be expressed as  
\begin{equation}
\begin{aligned}
  a_{j}&=-iq^{-d}\mu_{j}'(0),\\
  e_{\T}&=\sum_{j=1}^{c}\lambda_{j}a_{j},\\
  b_{j}&=\frac{\mu_{j}'(0)^{2}-q^{d}\mu_{j}''(0)}{q^{2d}},\\
  v_{\T}&=\sum_{j=1}^{c}\lambda_{j}b_{j}.\\
\end{aligned}\label{eq:const-of-thm}
\end{equation}
Finally, $\xi>0$ is chosen such that all non-dominant eigenvalues of $M$ have modulus strictly less
than $q^{d-\xi}$ at $t=0$.

These constants can be interpreted as follows: $a_{j}\log_{q}N$ and
$b_{j}\log_{q}N$ are the main terms of the mean and the variance, respectively, of
the output sum of the final component $C_{j}$. These expressions including the
derivatives of the
eigenvalues correspond to the formulas for mean and variance given in
\cite[Theorem~IX.9]{Flajolet-Sedgewick:ta:analy}. The constants $e_{\T}$ and
$v_{\T}$ are convex combinations of the corresponding constants of the final
components $C_{j}$.

The positive weight $\lambda_{j}$ in these convex combinations turns out to be the asymptotic probability of reaching the final component
$C_{j}$. This is connected to the following interpretation of the left eigenvector $\bfw_{0}^{\top}$:
If the final period $p$ is $1$, the entries of $\bfw_{0}^{\top}$ will be shown
to be the asymptotic probabilities
of reaching the corresponding states. This corresponds to the left eigenvector
used in a steady-state analysis. If $p>1$, these probabilities depend on
the length of the input modulo $p$. Then, we will prove that $\bfw_{0}^{\top}$ gives the average 
of these probabilities taken over all residues modulo $p$. These interpretations are justified in Section~\ref{sec:prel-from-lin-alg}. 

\subsection{Fourier Coefficients}\label{sec:fou}
This section contains the formulas for the Fourier coefficients of the
periodic fluctuation \begin{math}\Psi_{1}(x)\end{math}. 
For this purpose, we need the following
definitions. 

Let \begin{math}\chi_{k}=\frac{2\pi ik}{p\log
    q}\end{math} for $k\in\mathbb Z$ and $\bfones$ be a vector whose entries
are all one.

The \begin{math}s\end{math}-th coordinate of the
vector \begin{math}\bfb(\boldsymbol{n})\end{math} is the sum of the output of the
transducer \begin{math}\T\end{math} (including the final output) if starting in
state \begin{math}s\end{math} with input the $q$-ary joint expansion of \begin{math}\boldsymbol{n}\end{math}. In
particular, the first coordinate of $\bfb(\bfn)$ is $\T(\bfn)$, and \begin{math}\bfb(0)\end{math} is the vector of final
outputs. Furthermore, define the vector-valued function $\bfH(z)$ by the
Dirichlet series
\begin{equation}\label{eq:dirichlet-H-definition}
  \bfH(z)=\sum_{\substack{\bfn\geq 0\\\bfn\neq 0}}\bfb(\bfn)\lVert \bfn\rVert_{\infty}^{-z},
\end{equation}
where the inequality in the summation index is considered coordinate-wise and
$\lVert\,\cdot\,\rVert_{\infty}$ is the maximum norm.

\begin{theorem}\label{thm:fourier}
Let \begin{math}\T\end{math} be a subsequential, complete, deterministic transducer. Then
the Fourier coefficients of the \begin{math}p\end{math}-periodic fluctuation \begin{math}\Psi_{1}(x)\end{math} are
\begin{equation}
\begin{aligned}\label{eq:gen-fourier-coeff}
c_{0}&=-\frac{e_{\T}}{d\log q}-i\bfw_{0}'^{\top}\bfones+\frac1d\Res_{z=d}\bfw_{0}^{\top}\bfH(z),\\
c_{k}&=\frac{1}{d+\chi_{k}}\Res_{z=d+\chi_{k}}\bfw_{k}^{\top}\bfH(z)
\end{aligned}
\end{equation}
for \begin{math}k\neq0\end{math}.

The Fourier series $
  \sum_{k\in\mathbb Z}c_{k}\exp(\frac{2\pi ik}{p}x)
$
 converges absolutely and uniformly.

 The
function $\bfw_{k}^{\top}\bfH(z)$ is meromorphic in $\Re z>d-1$. It has a
possible double pole at $z=d$ for $k=0$ and possible simple poles at $z=d+\chi_{k}$ for $k\neq0$.
\end{theorem}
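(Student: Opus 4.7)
The plan is to analyze the vector Dirichlet series $\bfH(z)$ through a functional equation coming from the transducer's self-similar structure, apply Mellin--Perron summation to $N^d\Expect(\T(\bfn))=\sum_{\|\bfn\|_\infty<N}\T(\bfn)$, and shift the contour past $\Re z=d$ to read off the Fourier expansion of $\Psi_1(\log_q N)$.

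Splitting $\bfH(z)$ by the least significant digit of $\bfn$ and using the transducer recursion $\bfb(q\bfm+\bfeps)=A_\bfeps\bfb(\bfm)+\bfdelta_\bfeps$ (with $A_\bfeps=M_\bfeps|_{t=0}$ the $0/1$ transition matrix and $\bfdelta_\bfeps$ the output vector for digit $\bfeps$), together with a careful expansion of $\|q\bfm+\bfeps\|_\infty^{-z}$ in negative powers of $\|\bfm\|_\infty$, yields a functional equation
\begin{equation*}
(I-q^{-z}A)\bfH(z)=Q(z)
\end{equation*}
where $A=M|_{t=0}$ and $Q$ is meromorphic in $\Re z>d-1$. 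The function $Q$ inherits a simple pole at $z=d$ from the higher-dimensional zeta $Z(z)=\sum_{\bfm\neq 0}\|\bfm\|_\infty^{-z}$ arising in the contributions of the $\bfdelta_\bfeps$. Multiplying from the left by $\bfw_k^\top$ and using $\bfw_k^\top A=q^d e^{2\pi ik/p}\bfw_k^\top$ (since $\bfw_k^\top$ lies in a left eigenspace) gives $\bfw_k^\top\bfH(z)=\bfw_k^\top Q(z)/(1-q^{d-z}e^{2\pi ik/p})$; for $k\neq 0$ this has a simple pole at $z=d+\chi_k$ from the denominator, while for $k=0$ the double pole at $z=d$ combines the simple pole of $\bfw_0^\top Q(z)$ with the simple zero of $1-q^{d-z}$. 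This already establishes the last sentence of the theorem.

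Perron's formula now gives $\sum_{\|\bfn\|_\infty<N}\T(\bfn)=\frac{1}{2\pi i}\int_{c-i\infty}^{c+i\infty}\bfe_1^\top\bfH(z)N^z/z\,dz$ for $c>d$, and shifting to $\Re z=d-\xi$ picks up residues at the poles $z=d+\chi_k$, the remaining integral becoming the error term of Theorem~\ref{thm:asydist}. By construction, the residue of $\bfe_1^\top\bfH(z)N^z/z$ at $z=d+\chi_k$ coincides with the residue of $\bfw_k^\top\bfH(z)N^z/z$, since the other components of $\bfe_1^\top$ project onto non-dominant eigenspaces. Writing $N^{\chi_k}=\exp(2\pi ik\log_q N/p)$ and dividing by $N^d$ produces the Fourier series. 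The simple pole at $z=d+\chi_k$ for $k\neq 0$ directly yields $c_k=\frac{1}{d+\chi_k}\Res_{z=d+\chi_k}\bfw_k^\top\bfH(z)$. For $k=0$, a Laurent expansion of $\bfw_0^\top\bfH(z)\cdot N^z/z$ at the double pole identifies the coefficient of $(z-d)^{-2}$ as $de_\T/\log q$ (whence both the main term $e_\T\log_q N$ and the summand $-e_\T/(d\log q)$ in $c_0$), while the coefficient of $(z-d)^{-1}$ contributes $\frac{1}{d}\Res_{z=d}\bfw_0^\top\bfH(z)$; the remaining term $-i\bfw_0'^\top\bfones$ is pinned down by matching against the spectral expression $\Expect(\T(\bfn))\sim a_0\log_q N-i\bfw_0'^\top\bfones$ obtained by differentiating $(\mu_0(t)/q^d)^{\log_q N}\bfw_0^\top(t)\bfones$ at $t=0$, as in the proof of Theorem~\ref{thm:asydist}.

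For absolute and uniform convergence of $\sum_k c_k\exp(2\pi ikx/p)$, the factor $1/(d+\chi_k)=O(1/|k|)$ is combined with polynomial-in-$|k|$ bounds on $\Res_{z=d+\chi_k}\bfw_k^\top\bfH(z)$. These follow from the functional equation plus uniform bounds on $Q(z)$ and on the resolvent $(I-q^{-z}A)^{-1}$ on vertical lines away from the poles. The principal obstacles will be (i) the uniform resolvent bound, which has to accommodate the Jordan structure of $A$ and the finitely many dominant eigenvalues on $|\lambda|=q^d$ (one for each $l\in\calP$ corresponding to a final component), and (ii) the bookkeeping at the double pole $z=d$, where the Dirichlet-series computation has to be reconciled with the spectral picture involving perturbation of $\bfw_0^\top(t)$ in order to recover the explicit term $-i\bfw_0'^\top\bfones$ in $c_0$.
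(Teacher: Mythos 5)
Your proposal takes a genuinely different route from the paper. The paper never uses Mellin--Perron summation here: it computes each Fourier coefficient directly as $c_k=\frac1p\int_0^p\Psi_1(x)e^{-2\pi ikx/p}\,dx$, substituting the explicit series representation of $\Psi_1$ from Lemma~\ref{lemma:psi_1-explicit} (the one built from the auxiliary functions $f_l(r)$), and then recognizes the resulting sum as a Dirichlet series $A(z)$ evaluated at $z=d+\chi_k$. That series is split into four explicit pieces (involving the auxiliary series $L$, $Z$, $\bfw_k^\top\bfH$, $B$), and only one of them survives at $z=d+\chi_k$. Absolute and uniform convergence is then immediate from the H\"older continuity of $\Psi_1$ (Lemma~\ref{lem:hoelder}) plus Bernstein's theorem, sidestepping any need for vertical-line growth estimates.

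There is a genuine gap in your plan, and it is exactly where you flag uncertainty: the term $-i\bfw_0'^\top\bfones$ in $c_0$. First, the justification you give for replacing $\bfe_1^\top\bfH$ by $\bfw_k^\top\bfH$ — ``since the other components of $\bfe_1^\top$ project onto non-dominant eigenspaces'' — is incorrect: $\bfe_1^\top=\sum_{l\in\calP}\bfw_l^\top+\bfw^\top$, and the $\bfw_l^\top$ with $l\neq 0$ project onto \emph{dominant} eigenspaces (eigenvalue $q^de^{2\pi il/p}$, modulus $q^d$). For $k\neq0$ the conclusion happens to hold anyway because $\bfw_l^\top\bfH(z)$ has no pole at $z=d+\chi_k$ unless $l\equiv k\pmod p$; but at $z=d$ the pieces $\bfw_l^\top\bfH$ with $l\neq0$ \emph{do} have simple poles (coming from the pole of $J(z,\alpha,d-1)$ at $z=d$ in the paper's functional equation for $\bfH_C$, combined with the nonvanishing factor $1-q^{d-z}e^{2\pi il/p}$ there). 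Those residues are precisely where a Mellin--Perron derivation would pick up the derivative terms $i\bfw_l'^\top\bfones$; discarding them and then patching $c_0$ ``by matching against the spectral expression'' leaves the derivation circular at the one nontrivial point. Second, the clean functional equation $(I-q^{-z}A)\bfH(z)=Q(z)$ holds only for $d=1$: for $d\ge 2$ the quantity $\lVert q\bfm+\bfeps\rVert_\infty$ does not factor, and the paper instead has to introduce the whole family $\bfH_C(z)$ over faces $C\subseteq\{1,\dots,d\}$ with an infinite recursion coupling $\bfH_C(z)$ to $\bfH_{C\cup D}(z+m)$ (Lemma~\ref{lem:inf-recursion}); your ``careful expansion of $\lVert q\bfm+\bfeps\rVert_\infty^{-z}$'' would have to reproduce that structure. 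Finally, for the contour shift you would need uniform polynomial bounds on $\bfH(z)$ on vertical lines in $d-1<\Re z<d$; these are plausible but nontrivial, and the paper's route avoids needing them by never performing the Perron inversion in the first place.
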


The proof of this theorem is in
Section~\ref{sec:four-coeff-gener}. 

The infinite recursion given in Lemma~\ref{lem:inf-recursion} can be used to numerically evaluate the Dirichlet
series \begin{math}\bfH(z)\end{math} with arbitrary precision
 and to compute its residues
at \begin{math}z=d+\chi_{l}\end{math} (see Lemma~\ref{lem:dirichlet-H}
and~\cite{Grabner-Hwang:2005:digit}). For
$d=1$, the computation of the Fourier coefficients
can be done by the mathematical software system
Sage~\cite{Stein-others:2014:sage-mathem-6.4.1} (using the code submitted at
\url{http://trac.sagemath.org/17222}).

\begin{example}\label{ex:art-6p}The (artificial) transducer in Figure~\ref{fig:art-6p-trans} has two final components
  with periods $2$ and $3$, respectively. Thus the final period is $6$ and the
  function $\Psi_{1}(x)$ is $6$-periodic. The constant $e_{\T}$ of the
  expected value is $\frac{11}{8}$.
  In Figure~\ref{fig:art-6p}, the partial Fourier series with
  $2550$
  Fourier coefficients\footnote{We use $2550$ Fourier coefficients in this plot because the
    period length of the
    next summand of the Fourier series in Figure~\ref{fig:art-6p} is already less than the
    resolution of a standard printer.} is compared with the empirical values of the periodic
  fluctuation $\Psi_{1}$, i.e.,  
  \begin{equation}\label{eq:emp-values}
    \frac{1}{N}\sum_{n<N}\T(n)-\frac{11}{8}\log_{2}N
  \end{equation}
with integers $N$ and $4\leq \log_{2}N\leq 16$.

The computation of these $2550$ Fourier coefficients took less than
$6$~minutes using a standard dual-core PC.
\end{example}
\begin{figure}
    \centering
    \begin{tikzpicture}[auto, initial where=above, initial text=, >=latex,every state/.style={minimum
    size=1.3em}]
      \node[state, initial] (v0) at (0.000000, 2.000000) {};
      \node[state] (v1) at (3.000000, 2.000000) {};
      \node[state] (v2) at (3.000000, 5.000000) {};
      \node[state] (v3) at (-4.500000, 5.000000) {};
      \node[state] (v4) at (-6.000000, 2.000000) {};
      \node[state] (v5) at (-3.000000, 2.000000) {};
      \path[->] (v3) edge node[rotate=-63.43, anchor=south] {$1\mid 1$, $0\mid 3$} (v5);
      \path[->] (v4) edge node[rotate=63.43, anchor=south] {$1\mid 1$, $0\mid 1$} (v3);
      \path[->] (v5) edge node[rotate=360.00, anchor=north] {$1\mid 1$, $0\mid 2$} (v4);
      \path[->] (v0) edge node[anchor=north] {$1\mid 1$} (v5);
      \path[->] (v0) edge node[anchor=south] {$0\mid 1$} (v1);
      \path[->] (v1.95.00) edge node[rotate=90.00, anchor=south] {$0\mid 1$, $1\mid 0$} (v2.265.00);
      \path[->] (v2.-85.00) edge node[rotate=90.00, anchor=north] {$0\mid 2$, $1\mid 2$} (v1.85.00);
    \end{tikzpicture}
    \caption{Transducer of Example~\ref{ex:art-6p}: All states are final with final output $0$.}
    \label{fig:art-6p-trans}
  \end{figure}
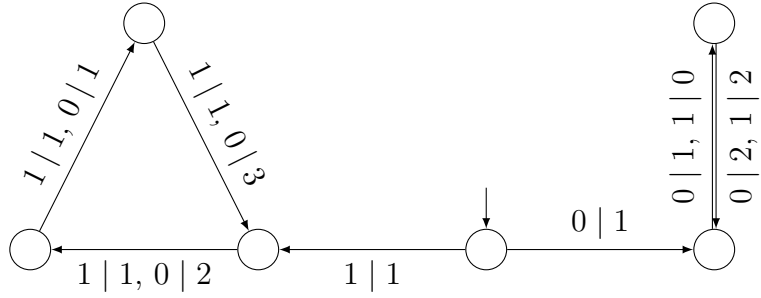
  \begin{figure}
    \centering
    \includegraphics{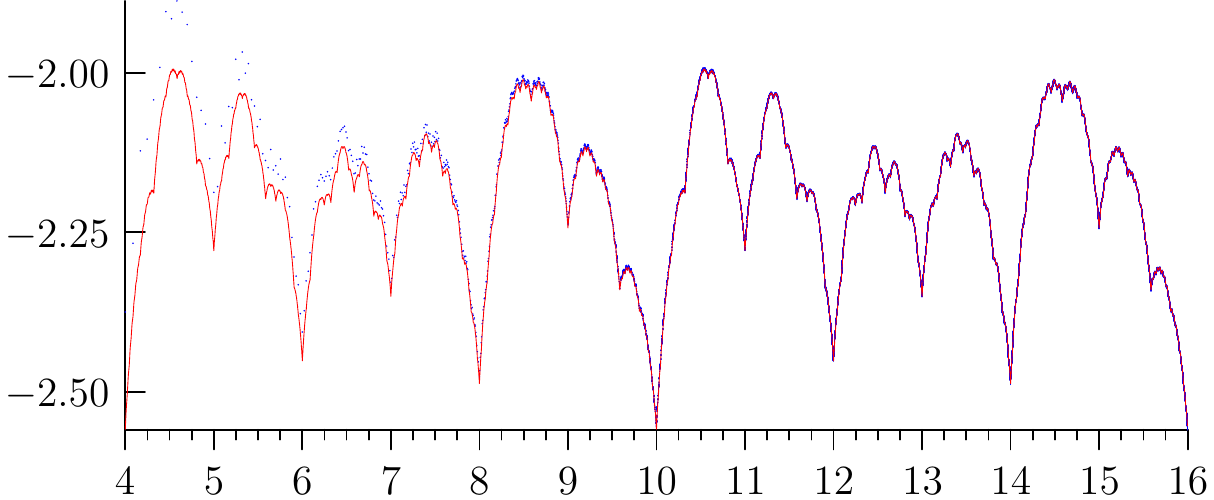}
    \caption{Partial Fourier series compared with the empirical values of the
      function $\Psi_1$ of Example~\ref{ex:art-6p}.}
    \label{fig:art-6p}
  \end{figure}

In Example~\ref{ex:paperfolding}
 we compute the first \begin{math}2550\end{math} Fourier coefficients of the abelian complexity
function of the paperfolding sequence.

As a corollary of Theorem~\ref{thm:fourier}, we obtain the following result which was already proved by Delange~\cite{Delange:1975:chiffres}.
\begin{corollary}\label{cor:delange}
  The Fourier coefficients of the periodic fluctuation
  \begin{equation*}
    \Psi_{1}(\log_{q}N)=\frac 1N\sum_{n<N}s_{q}(n)-\frac{q-1}{2}\log_{q}N
  \end{equation*}
 for the $q$-ary sum-of-digits function $s_{q}(n)$ are
 \begin{equation}\label{eq:delange-fou-coeff}
 \begin{aligned}
   c_{0}&=\frac{q-1}{2\log q}(\log (2\pi) -1)-\frac{q+1}{4} ,\\
   c_{k}&=-\frac{q-1}{\chi_{k}(1+\chi_{k})\log q}\zeta(\chi_{k})
 \end{aligned}
\end{equation}
for $k\neq 0$ and $\chi_{k}=\frac{2\pi ik}{\log q}$ where $\zeta$ denotes the Riemann $\zeta$-function.
\end{corollary}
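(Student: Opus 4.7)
The plan is to specialize Theorem~\ref{thm:fourier} to the one-state transducer $\T$ that, for each $q$-ary digit $\varepsilon\in\{0,\ldots,q-1\}$, has a loop with input and output both equal to $\varepsilon$, and final output~$0$. With a single state we have $d=1$, $p=1$, $c=1$, every vector collapses to a scalar, and the transition matrix is $M(t)=\sum_{\varepsilon=0}^{q-1}e^{it\varepsilon}$. Its unique dominant eigenvalue is $\mu_{1}(t)=M(t)$, which gives $\mu_{1}(0)=q$, $\mu_{1}'(0)=iq(q-1)/2$, and hence $e_{\T}=(q-1)/2$. The scalar projector satisfies $\bfw_{k}(t)\equiv1$, so $\bfw_{0}'=0$ and the correction term $-i\bfw_{0}'^{\top}\bfones$ in~\eqref{eq:gen-fourier-coeff} vanishes. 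With $\chi_{k}=2\pi ik/\log q$, Theorem~\ref{thm:fourier} therefore reduces to
\begin{equation*}
c_{0}=-\frac{q-1}{2\log q}+\Res_{z=1}H(z),\qquad c_{k}=\frac{1}{1+\chi_{k}}\Res_{z=1+\chi_{k}}H(z)\quad(k\neq0),
\end{equation*}
where $H(z)=\sum_{n\geq1}s_{q}(n)n^{-z}$. The task is reduced to meromorphically continuing $H(z)$ across $\Re z=1$ and computing the required residues.

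The central step is a closed form for $H(z)$. I would start from the classical identity $s_{q}(n)=(q-1)\sum_{j\geq1}\{n/q^{j}\}$ (valid for $n\geq1$), group the inner sum by residue of $n$ modulo $q^{j}$, and rewrite via the Hurwitz zeta function $\zeta(z,a)=\sum_{m\geq0}(m+a)^{-z}$. Applying the multiplication formula $\sum_{k=0}^{q-1}\zeta(z,(a+k)/q)=q^{z}\zeta(z,a)$ and summing the resulting geometric series in $j$ yields a representation of the form
\begin{equation*}
H(z)=\frac{-(q-1)\,\zeta(z-1)}{(z-1)(q^{z-1}-1)}+B(z),
\end{equation*}
with $B(z)$ holomorphic in a neighbourhood of $\Re z\geq 1$. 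Since $\zeta(z-1)$ is holomorphic at every $z=1+\chi_{k}$, the only singularities of $H(z)$ on the line $\Re z=1$ come from the zeros of $q^{z-1}-1$: simple poles at $z=1+\chi_{k}$ for $k\neq0$, and a double pole at $z=1$ created by the combined simple zeros of $q^{z-1}-1$ and $z-1$. This matches exactly the pole structure predicted by Theorem~\ref{thm:fourier}.

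Finally I would extract the residues. For $k\neq0$ the factor $q^{z-1}-1$ has a simple zero at $z=1+\chi_{k}$ with derivative $\log q$, so
\begin{equation*}
\Res_{z=1+\chi_{k}}H(z)=\frac{-(q-1)\,\zeta(\chi_{k})}{\chi_{k}\log q},
\end{equation*}
and dividing by $1+\chi_{k}$ produces the announced formula for $c_{k}$. At $z=1$ I would combine $(q^{z-1}-1)^{-1}=\tfrac{1}{(z-1)\log q}\bigl(1-\tfrac12(z-1)\log q+O((z-1)^{2})\bigr)$ with the Taylor expansion $\zeta(z-1)=-\tfrac12-\tfrac12\log(2\pi)(z-1)+O((z-1)^{2})$ coming from $\zeta(0)=-\tfrac12$ and $\zeta'(0)=-\tfrac12\log(2\pi)$. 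Collecting the $(z-1)^{-1}$-coefficient and adding $B(1)$ then produces $\Res_{z=1}H(z)=\frac{q-1}{2\log q}\log(2\pi)-\frac{q+1}{4}$, which together with the $-\frac{q-1}{2\log q}$ correction reproduces the stated value of $c_{0}$. The main technical obstacle is this double-pole computation at $z=1$: several Taylor series must be tracked to the right order, and the constant $-\frac{q+1}{4}$ only emerges after correctly combining the subleading contribution from $\zeta'(0)$ with the subleading term of $(q^{z-1}-1)^{-1}$ and with the value $B(1)$; by contrast, the simple-pole residues at $z=1+\chi_{k}$ for $k\neq0$ fall out directly from the factorization.
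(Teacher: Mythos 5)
Your overall strategy matches the paper's exactly: specialize Theorem~\ref{thm:fourier} to the single-state transducer, compute $e_{\T}=(q-1)/2$ and $\bfw_0'^\top=0$, and reduce the problem to locating the poles of $H(z)=\sum_{n\geq 1}s_q(n)n^{-z}$ near $\Re z=1$ and computing the residues. The reduction and the residue at $z=1+\chi_k$ for $k\neq 0$ are correct. However, there is a genuine gap in the handling of $z=1$.

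Your claimed decomposition $H(z)=\frac{-(q-1)\zeta(z-1)}{(z-1)(q^{z-1}-1)}+B(z)$ with $B$ \emph{holomorphic} near $\Re z\geq 1$ is false. Using the paper's identity $D(z)=\sum_{m\geq1}(s_q(m)-s_q(m-1))m^{-z}=\frac{1-q^{1-z}}{1-q^{-z}}\zeta(z)$ together with partial summation shows that, up to a function holomorphic near $\Re z=1$,
\begin{equation*}
  H(z)=\frac{D(z-1)}{z-1}
  =\frac{\zeta(z-1)}{z-1}
  -\frac{(q-1)\,\zeta(z-1)}{(z-1)\bigl(q^{z-1}-1\bigr)},
\end{equation*}
so the ``remainder'' $B(z)$ in your formula actually carries the extra term $\frac{\zeta(z-1)}{z-1}$, which has a \emph{simple pole} at $z=1$ with residue $\zeta(0)=-\tfrac12$. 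This is not a cosmetic oversight: the residue of your main term alone is $\frac{(q-1)\log(2\pi)}{2\log q}-\frac{q-1}{4}$, and it is precisely the missing $-\tfrac12$ that converts $-\frac{q-1}{4}$ into the correct $-\frac{q+1}{4}$. Equivalently: you would need $B(1)=-\tfrac12$, but $B$ is not even finite at $z=1$ in your decomposition, and you never actually compute this value, so the constant $c_0$ is not derived. (The poles at $z=1+\chi_k$, $k\neq0$, are unaffected, so your formula for $c_k$ is fine.) By contrast, the paper sidesteps the continuation of $H$ entirely: it derives the exact closed form $D(z)=\frac{1-q^{1-z}}{1-q^{-z}}\zeta(z)$ from the one-step recursion $s_q(qm+\eps)=s_q(m)+\eps$, relates $H$ back to $D$ by partial summation and a binomial series to obtain $\Res_{z=1+\chi_k}H(z)=\Res_{z=\chi_k}\frac{D(z)}{z}$, and then the double-pole Laurent expansion of $D(z)/z$ at $z=0$ produces $c_0$ in one clean computation. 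If you want to pursue your Hurwitz-zeta route, you must either carry the extra $\frac{\zeta(z-1)}{z-1}$ term explicitly or derive the correct full principal part of $H$ at $z=1$ from the multiplication formula; as written, the claimed closed form is wrong in exactly the place that matters.
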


We prove this corollary in Section~\ref{sec:four-coeff-gener}.

\subsection{Non-differentiability}\label{sec:nondiff}
In this section, we
prove that for certain transducers, the periodic fluctuation \begin{math}\Psi_{1}(x)\end{math}
of the expected value is nowhere
differentiable. 

\begin{theorem}\label{thm:nondiff}
Let \begin{math}d=1\end{math}. Assume that \begin{math}e_{\T}\not\in\integers\end{math} and that the transducer \begin{math}\T\end{math}
has a reset sequence and output alphabet \begin{math}\integers\end{math}. Then the function \begin{math}\Psi_{1}(x)\end{math} is non-differentiable
for any \begin{math}x\in\reals\end{math}.
\end{theorem}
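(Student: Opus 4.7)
My plan is a proof by contradiction, combining the asymptotic expansion of Theorem~\ref{thm:asydist}, the digit-recursion structure enabled by the reset sequence, and the integrality of $\T(N)$ forced by the integer output alphabet. Assume $\Psi_1$ is differentiable at some $x_0\in\reals$ with derivative $D$. Setting $S(N):=\sum_{n<N}\T(n)$, the expansion gives
\[
\Psi_1(\log_q N)=\frac{S(N)}{N}-e_{\T}\log_q N+\bigOh(N^{-\xi}\log N),
\]
so that $\Psi_1$-differences at integer points translate into differences of $S$, up to controlled error.

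Next, I would exploit the reset sequence $w=w_0\cdots w_{L-1}$ of length $L$ driving every state into a fixed state $s^\ast$. Writing $R:=\sum_{i=0}^{L-1}w_iq^i$ and $N(M):=Mq^k+Rq^{k-L}$ for $k\ge L$ and $M\in\naturals$, the transducer sits in $s^\ast$ exactly after consuming the $k$ lowest-order digits of $N(M)+j'$, irrespective of $j'\in\{0,\dots,q^{k-L}-1\}$. Consequently $\T(N(M)+j')=A(j')+B(M)$, where $A(j')$ depends only on $j'$ and $B(M)$ is the output sum produced by reading $M$ starting from $s^\ast$ (including the final output). Summing over $j'$ and subtracting the resulting identity for a second upper part $M'$ cancels the $A$-contribution, yielding
\[
\Delta(M,M'):=\bigl[S(N(M)+q^{k-L})-S(N(M))\bigr]-\bigl[S(N(M')+q^{k-L})-S(N(M'))\bigr]=q^{k-L}\bigl(B(M)-B(M')\bigr)\in q^{k-L}\integers.
\]

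Finally, I would evaluate $\Delta(M,M')$ through the asymptotic expansion. Expanding $S(N(M)+q^{k-L})-S(N(M))$ to first order in $q^{k-L}/N(M)$, linearising the $\Psi_1$-increment via $D$, and performing the subtraction, the $M$-independent contributions cancel and I obtain $\Delta(M,M')=e_{\T}q^{k-L}\log_q(M/M')+\bigOh(N^{1-\xi}\log N)$. Choosing $M_k:=\lfloor q^{a(k)+x_0}\rfloor$ and $M_k':=\lfloor q^{a(k)-1+x_0}\rfloor$ with $a(k)\to\infty$ slowly enough (for instance $a(k)\sim\xi k/(2(1-\xi))$) so that $\log_q N(M_k),\log_q N(M_k')\to x_0\pmod 1$, $\log_q(M_k/M_k')\to 1$, and $N^{1-\xi}\log N/q^{k-L}\to 0$, dividing by $q^{k-L}$ yields $B(M_k)-B(M_k')=e_{\T}+o(1)$. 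Since $B(M_k)-B(M_k')\in\integers$ for every $k$ and $e_{\T}\notin\integers$, this is a contradiction.

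The main obstacle I foresee is delivering a genuinely $o(q^{k-L})$ error after the subtraction: individually, the asymptotic carries $\bigOh(N^{1-\xi}\log N)$, which dwarfs $q^{k-L}$; it must be tamed by the balancing of $k$ and $a(k)$ described above together with the fact that the $\Psi_1$- and $D$-terms cancel exactly, since both arguments share the target $x_0$ modulo $1$. A secondary technical point is that $B(M)$ must admit the same type of asymptotic with leading constant $e_{\T}$, which one can verify from the fact that the reset-sequence hypothesis forces $p=c=1$ and hence $\T^{s^\ast}$ is governed by the same dominant eigenvalue data as $\T$.
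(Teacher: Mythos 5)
Your proof is correct and follows essentially the same Tenenbaum-style scheme as the paper's: the reset sequence isolates an integer-valued upper-block contribution $B(M)$ to the output sum, and comparing $B(M_k)-B(M_k')$ with the asymptotic expansion at a point of differentiability forces an integer to equal $e_{\T}+o(1)$, contradicting $e_{\T}\notin\integers$. Your summation over the full residue class of length $q^{k-L}$ and direct variation of the upper block $M$ (so that $\log_q(M_k/M_k')\to 1$) is a somewhat cleaner packaging than the paper's sub-interval sum with its zero-buffer parameter $L_f(k)=ck+f$, but both amount to the same comparison, with your growth constraint on $a(k)$ playing the role of the paper's condition $c>1/\xi-1$.
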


The proof
can be found in Section~\ref{appendix:non-differentiability}.
There, we follow the method presented by
Tenenbaum~\cite{Tenenbaum:1997:non-derivabilite}, see also Grabner and
Thuswaldner~\cite{Grabner-Thuswaldner:2000:sum-of-digits-negative}. 

In~\cite{Tenenbaum:1997:non-derivabilite,Grabner-Thuswaldner:2000:sum-of-digits-negative},
the reset sequence consists only of \begin{math}0\end{math}'s. If working with digit expansions,
it is often possible to choose such a reset sequence. However, in the context
of recursions, this is not always possible, see
Example~\ref{ex:paperfolding}. There the reset sequence
is \begin{math}(00001)\end{math}. 

For a general finally aperiodic, finally connected transducer, the existence of a
reset sequence cannot be guaranteed.

\subsection{Recursions}\label{sec:rec}
In this section, we describe how to reduce a recursion to a transducer computing the given
sequence. All inequalities in this section are considered coordinate-wise.

Let  \begin{math}q\ge 2\end{math}, \begin{math}\kappa\end{math}, \begin{math}\kappa_{\bflambda}\in\integers\end{math}, \begin{math}\bfr_{\bflambda}\in\integers^{d}\end{math},
 \begin{math}t_{\bflambda}\in\reals\end{math} and \begin{math}0\leq
   \kappa_{\bflambda}<\kappa\end{math} for $0\le \bflambda<q^\kappa\bfones$. If $d\geq 2$, then additionally let
 $\bfr_{\bflambda}\geq 0$ for all $\bflambda$.

Consider the sequence \begin{math}a(\bfn)\end{math},
$\bfn\in\naturals_{0}^{d}$, defined
by the recursion
\begin{equation}\label{eq:recursions-d}
a(q^{\kappa}\bfn+\bflambda)=a(q^{\kappa_{\bflambda}}\bfn+\bfr_{\bflambda})+t_{\bflambda}\quad\text{
  for }\quad 0\leq\bflambda<q^{\kappa}\bfones
\end{equation}
 and for all integer vectors $\bfn$ such that the arguments on both sides
 are non-negative.  Furthermore, initial values $a(\bfn)$ for $\bfn\in\calI$ have to be given
 for a suitable finite set $\calI\subset \naturals_0^d$.

It must be ensured that the recursion \eqref{eq:recursions-d} does not lead to
conflicts and that the set of $\calI$ is appropriate. Additionally, we
require  that $\calI$ is minimal (with respect to
inclusion). In that case, we say that the recursion is well-posed.

In Section~\ref{sec:recursion-proof}, we construct a subsequential,
complete, deterministic transducer $\T$ (also when the recursion is not
well-posed) reading the $q$-ary joint expansion of integer vectors without
leading zeros.
We will define a distinguished subset of its states, called
\emph{simple states}. Furthermore, disjoint classes $F_1$, \ldots, $F_K$ of
integer vectors will be defined.

\begin{theorem} \label{thm:recursion}
  The recursion \eqref{eq:recursions-d} is well-posed if and only if
  \begin{enumerate}
  \item for each cycle consisting of simple states with transitions with zero
    input label, the sum of its output
    transitions vanishes and
  \item the set $\calI$ consists of one representative of each $F_j$,  $1\le j\le K$.
  \end{enumerate}
  In that case, the sum of the output of $\T$ is the sequence $a$,
  i.e., \begin{math}\T(\bfn)=a(\bfn)\end{math} for all $\bfn\ge 0$.
\end{theorem}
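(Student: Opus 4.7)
The plan is to establish, for the transducer $\T$ constructed in Section~\ref{sec:recursion-proof}, a correspondence between walks in $\T$ and iterated applications of the recursion~\eqref{eq:recursions-d}, and then read off both the well-posedness criterion and the identity $\T(\bfn)=a(\bfn)$ from this correspondence.

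First I would unpack the intended meaning of the simple states and of the classes $F_1,\ldots,F_K$. The idea is that reading the top $\kappa$ digits $\bflambda$ of the input encodes one application of the recursion: it rewrites $a(q^{\kappa}\bfn'+\bflambda)$ as $a(q^{\kappa_{\bflambda}}\bfn'+\bfr_{\bflambda})+t_{\bflambda}$, which consumes $\kappa-\kappa_{\bflambda}$ leading digits and adds the constant offset $\bfr_{\bflambda}$; the simple states are precisely those states that encode such a pending ``residual'' recursion call, and transitions with zero input label along simple states correspond to a descent in the recursion tree with no new digit being consumed. Two integer vectors lie in the same class $F_j$ exactly when the recursion, applied repeatedly, eventually relates them; thus specifying $a$ at one representative of each $F_j$ is forced, and any further constraint is redundant or conflicting.

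For the \emph{if} direction of well-posedness, I would argue by strong induction on some measure of $\bfn$ (for example, $\lVert\bfn\rVert_{\infty}$, refined lexicographically to handle the zero-input-label descents via condition~(i)) that $\T(\bfn)=a(\bfn)$. The base case is $\bfn\in\calI$, where by construction of $\T$ the computation terminates in a simple state whose final output label equals $a(\bfn)$. In the inductive step, the first $\kappa$ digits of the $q$-ary joint expansion drive $\T$ along transitions that output exactly $t_{\bflambda}$, after which the remaining computation is the one for $q^{\kappa_{\bflambda}}\bfn'+\bfr_{\bflambda}$; the induction hypothesis together with~\eqref{eq:recursions-d} yields the claim. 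Condition~(i) is used precisely to ensure that when a descent cycles back through simple states on zero input (corresponding to a chain of recursion rewrites that returns to the same residual call), the accumulated output is zero, so the computation is independent of the number of leading zeros.

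For the \emph{only if} direction, I would show that a violation of either condition forces a contradiction. If (ii) fails because $\calI$ misses a class $F_j$, some $a(\bfn)$ is unconstrained by the recursion, contradicting uniqueness; if $\calI$ contains two representatives of the same $F_j$, the recursion forces an algebraic relation between the two initial values, which for generic initial data is violated. If (i) fails, a simple-state cycle on zero input with nonzero output sum $c$ expresses the identity $a(\bfm)=a(\bfm)+c$ through iterated application of the recursion, which is inconsistent.

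The main obstacle will be the bookkeeping around the simple states and the classes $F_j$: pinning down exactly which state $\T$ reaches after reading a given prefix, verifying that this state's outgoing structure and its final output encode the current residual recursion call, and translating the cycle condition on the transducer side into the corresponding consistency condition on the recursion side. These are all direct but delicate consequences of the explicit construction in Section~\ref{sec:recursion-proof}, and once that construction is laid out the induction above goes through cleanly.
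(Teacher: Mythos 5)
Your high-level plan (a walk--recursion correspondence, then read off well-posedness and $\T(\bfn)=a(\bfn)$) points in roughly the right direction, but there are several concrete gaps that keep this from being a proof and one conceptual misunderstanding that would actually lead you astray.

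First, your characterization of the classes $F_j$ is wrong. You write that ``two integer vectors lie in the same class $F_j$ exactly when the recursion, applied repeatedly, eventually relates them.'' That would make $F_j$ an equivalence class under the transitive closure of the recursion, which would sweep in all tree vertices hanging off a cycle. In the paper, $F_j$ is either the vertex set of a single cycle in the recursion digraph $\calR$ (equivalently, the carries of a cycle of simple states with input $0$ in the auxiliary transducer $\Ttilde$) or a singleton $\{\bfl\}$ with $A(\bfl)=\infty$ --- i.e., the terminal structures in each component of $\calR$, \emph{not} the whole component. This distinction matters for both directions of (ii): minimality of $\calI$ is what rules out extra initial values at tree vertices, and the ``one representative per terminal structure'' characterization is exactly what the cycle/sink description supplies. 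Your ``generic initial data'' argument for ruling out two representatives of the same $F_j$ also misses the point: the paper's definition of well-posedness \emph{requires} $\calI$ minimal with respect to inclusion, so two representatives of the same cycle violates minimality for \emph{any} data, not just generic data.

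Second, the heart of the proof --- the bijection between cycles in $\calR$ and accessible cycles of simple states with input $0$ in $\Ttilde$ (and then its transfer from $\Ttilde$ to the reduced deterministic transducer $\T$) --- is exactly what you defer as ``bookkeeping\dots once that construction is laid out the induction above goes through cleanly.'' It does not go through cleanly: establishing this bijection requires knowing precisely which carry a zero-padded input reaches (Statement~\ref{statement:transducer-path-destination} in the paper), which is proved via the path invariant \eqref{eq:transducer-invariant-A} together with a congruence-mod-$q^M$ argument and finiteness of the accessible part. You also need to show that the ``cycles of simple states with input $0$'' in the reduced $\T$ are precisely the reductions of such cycles in $\Ttilde$, and that a cycle of $\Ttilde$ passing through a non-simple state cannot collapse to a simple-state cycle in $\T$. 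None of this is routine, and your induction gives no handle on it.

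Third, your inductive step for $\T(\bfn)=a(\bfn)$ is set up in the wrong direction: you say ``the first $\kappa$ digits of the $q$-ary joint expansion drive $\T$ along transitions that output exactly $t_{\bflambda}$.'' The transducer reads least-significant digit first; recursion transitions fire eagerly once at least $\kappa$ digits have been stored and consume $\kappa-\kappa_{\bflambda}$ of the stored carry, not the leading digits of the input. The paper bypasses a direct induction on $\|\bfn\|_\infty$ entirely: it proves the invariants \eqref{eq:transducer-invariant-A}--\eqref{eq:transducer-invariant-a} by induction on path length, and then correctness follows in one line by applying the invariant to the full input path and noting (via Lemma~\ref{lemma:transducer-non-negativity}) that the terminal carry is nonnegative. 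Your $\lVert\bfn\rVert_\infty$-with-lexicographic-refinement scheme would have to re-derive something equivalent to these invariants anyway, and as stated it does not.

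In short: the intuitive picture is in the right neighborhood, but the actual content of the theorem lives in the bijection lemma, the path invariants, the $\Ttilde$-to-$\T$ reduction, and the correct characterization of the $F_j$, all of which are either skipped or misdescribed.
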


The proof of this theorem is in Section~\ref{sec:recursion-proof}.
Combining this result with Theorem~\ref{thm:asydist} yields an asymptotic analysis of
the  sequence \begin{math}a(\bfn)\end{math}, as in Example~\ref{ex:paperfolding}. Moreover,
this asymptotic analysis can be performed algorithmically in Sage for $d=1$
(using the code submitted at \url{http://trac.sagemath.org/17221}).
A combinatorial description of the sets $F_i$ involving an auxiliary transducer
is given in Remark~\ref{remark:combinatorial-characterization-outdegree-zero-recursion-digraph}.

\begin{remark}
  For $d\ge 2$, and $r_{\bflambda}\not\ge 0$, the sequence cannot be computed by
  a finite transducer: For every $j\ge 0$, there are non-zero integer vectors
  $\bfn\ge 0$, $\bfn'\ge 0$ with $\bfn\equiv \bfn' \pmod{q^j}$---i.e., a finite
  deterministic transducer
  cannot distinguish between $\bfn$ and $\bfn'$---such that
  the recursion \eqref{eq:recursions-d} can be applied for the argument $q^\kappa \bfn
  +\bflambda$ but cannot be applied for $q^\kappa \bfn'+\bflambda$.

  This problem does not arise in the case of dimension $d=1$: if the end of the
  input is not yet reached (this is something the transducer knows), there is a
  guaranteed forthcoming digit $\ge 1$ (instead of $\neq 0$ in the higher
  dimensional case). This information is enough to decide
  whether the recursion can be used.
\end{remark}

\begin{remark}
  Suppose that the given sequence is defined for $\bfn\ge \bfn_0$ for some
  constant $\bfn_0$. Then the sequence $b(\bfn)=a(\bfn+\bfn_0)$ fulfills
  \eqref{eq:recursions-d} with $\kappa_{\bflambda}$, $\bfr_{\bflambda}$ and
  $t_{\bflambda}$ replaced by $\kappa_{\bfmu}$,
  $q^{\kappa_{\bfmu}}\bfs+\bfr_{\bfmu}-\bfn_0$ and $t_{\bfmu}$, respectively,
  where $\bfn_0+\bflambda=q^{\kappa}\bfs + \bfmu$ for $0\le \bfmu<q^\kappa\bfones$.
  Then Theorem~\ref{thm:recursion} can be applied.
\end{remark}

  \begin{figure}
    \centering
    \begin{tikzpicture}[auto, initial text=, >=latex, every state/.style={minimum
      size=1.3em}]
      \node[state, initial, initial where=below] (v0) at (0.000000, 0.000000) {};
      \node[state] (v1) at  (-3.500000, 1.5000000)  {};
      \node[state] (v2) at  (3.000000, 1.5000000)   {};
      \node[state] (v3) at  (-5.00000, 3.000000)    {};
      \node[state] (v4) at  (-2.00000, 3.000000)    {};
      \node[state] (v5) at  (1.5000000, 3.000000)   {};
      \node[state] (v6) at  (4.5000000, 3.000000)   {};
      \node[state] (v7) at  (-5.7500000, 5.5000000) {};
      \node[state] (v8) at  (-4.2500000, 5.5000000) {};
      \node[state] (v9) at  (-1.2500000, 5.5000000) {};
      \node[state] (v10) at (-2.7500000, 5.5000000) {};
      \node[state] (v11) at (0.7500000, 5.5000000)  {};
      \node[state] (v12) at (2.2500000, 5.5000000)  {};
      \node[state] (v13) at (5.2500000, 5.5000000)  {};
      \node[state] (v14) at (3.7500000, 5.5000000)  {};
      \node[state] (v15) at (-2.00000, 9.000000)    {};
      \node[state] (v16) at (-3.50000, 9.000000)    {};
      \node[state] (v17) at (-5.00000, 9.000000)    {};

      \path[->] (v0) edge node[sloped, anchor=north] {$0\mid 0$} (v1);
      \path[->] (v0) edge node[sloped, anchor=north] {$1\mid 0$} (v2);
      \path[->] (v1) edge node[sloped, anchor=north] {$0\mid 0$} (v3);
      \path[->] (v1) edge node[sloped, anchor=north] {$1\mid 0$} (v4);
      \path[->] (v3) edge node[sloped, anchor=north] {$0\mid 0$} (v7);
      \path[->] (v3) edge node[sloped, anchor=north] {$1\mid 0$} (v8);
      \path[->] (v7) edge[loop above] node {$0\mid 0$} ();
      \path[->] (v7) edge node[sloped, anchor=south] {$1\mid 0$} (v8);
      \path[->] (v2) edge node[sloped, anchor=north, pos=0.7] {$0\mid 0$} (v5);
      \path[->] (v2) edge node[sloped, anchor=north, pos=0.7] {$1\mid 0$} (v6);
      \path[->] (v5) edge node[sloped, anchor=north, pos=0.4] {$0\mid 0$} (v11);
      \path[->] (v5.79.30) edge node[sloped, anchor=south, pos=0.6] {$1\mid 0$} (v12.247.30);
      \path[->] (v11) edge[bend right=50] node[sloped, anchor=north] {$1\mid 2$} (v2);
      \path[->] (v11) edge[in=-120, out=-150, loop] node[anchor=north east] {$0\mid 0$} ();
      \path[->] (v15) edge node[sloped, anchor=south, style=near start] {$0\mid 0$} (v4);
      \path[->] (v15) edge node[sloped, anchor=south] {$1\mid 0$} (v16);
      \path[->] (v4) edge node[sloped, anchor=north] {$0\mid 0$} (v9);
      \path[->] (v4) edge node[sloped, anchor=north] {$1\mid 0$} (v10);
      \path[->] (v9) edge node[sloped, anchor=north, pos=0.4] {$0\mid 1$} (v11);
      \path[->] (v9) edge[bend left] node[sloped, anchor=south, pos=0.4] {$1\mid 1$} (v12);
      \path[->] (v6.79.30) edge node[sloped, anchor=south, pos=0.6] {$0\mid 0$} (v13.247.30);
      \path[->] (v6) edge node[sloped, anchor=north, pos=0.4] {$1\mid 0$} (v14);
      \path[->] (v13) edge[bend left=50] node[sloped, anchor=north] {$0\mid 2$} (v2);
      \path[->] (v13.-100.70) edge node[sloped, anchor=north, pos=0.6] {$1\mid 2$} (v6.69.30);
      \path[->] (v16) edge node[sloped, anchor=north] {$0\mid 0$} (v8);
      \path[->] (v16) edge node[sloped, anchor=south] {$1\mid 0$} (v17);
      \path[->] (v8) edge[bend left] node[sloped, anchor=south, pos=0.4] {$0\mid 0$} (v9);
      \path[->] (v8) edge node[sloped, anchor=north] {$1\mid 0$} (v10);
      \path[->] (v12) edge node[sloped, anchor=north, pos=0.6] {$1\mid 2$} (v2);
      \path[->] (v12.-100.70) edge node[sloped, anchor=north, pos=0.6] {$0\mid 2$} (v5.69.30);
      \path[->] (v10) edge[bend left=60] node[sloped, anchor=south, pos=0.5] {$0\mid 1$} (v13);
      \path[->] (v10) edge[bend left=45] node[sloped, anchor=south, pos=0.6] {$1\mid 1$} (v14);
      \path[->] (v14) edge node[sloped, anchor=north, pos=0.6] {$0\mid 2$} (v2);
      \path[->] (v14) edge[in=0, out=90, relative=false] node[sloped, anchor=south, style=near end] {$1\mid 1$} (v15);
      \path[->] (v17) edge node[sloped, anchor=north] {$0\mid 0$} (v8);
      \path[->] (v17) edge[loop above] node {$1\mid 0$} ();
    \end{tikzpicture}
    \caption{Transducer computing the abelian complexity function $\rho(n)$ of
      the paperfolding sequence. For simplicity, the final output labels are omitted.}
    \label{fig:paperfoldingTrans}
  \end{figure}
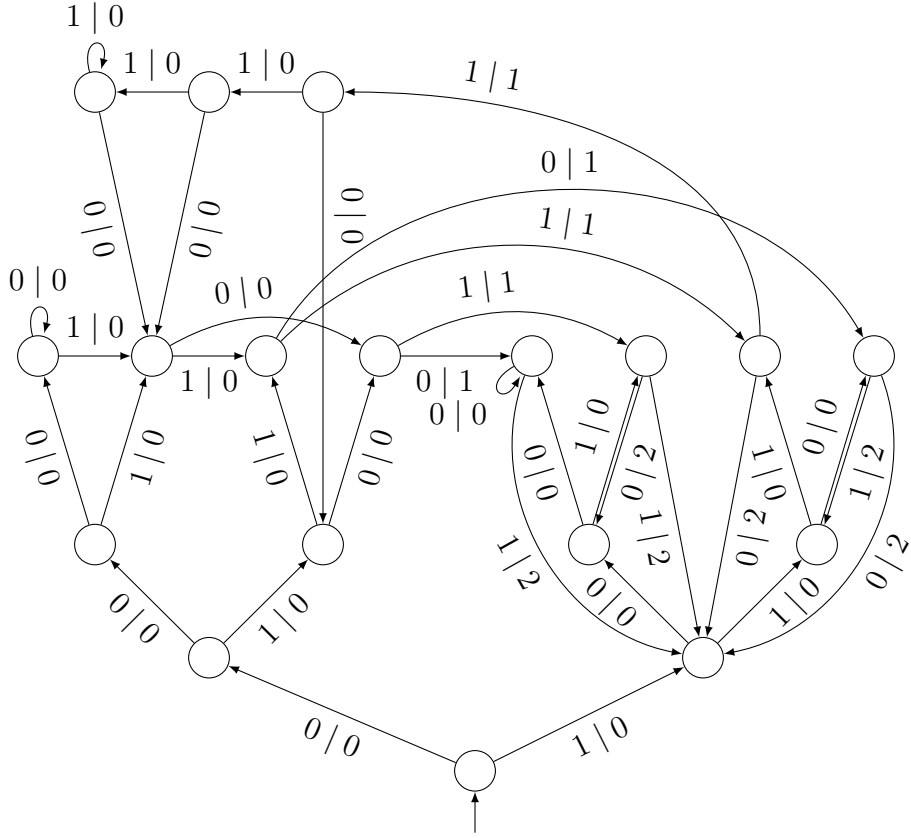
\begin{example}\label{ex:paperfolding}
Consider the abelian complexity function \begin{math}\rho(n)\end{math} of the paperfolding
sequence. The paperfolding sequence is obtained by repeatedly folding a strip
of paper in half in the same direction. Then we open the strip and encode a
right turn by \begin{math}1\end{math} and a left turn by \begin{math}0\end{math}. The abelian complexity
function \begin{math}\rho(n)\end{math} gives the number of abelian equivalence classes of
subwords of length \begin{math}n\end{math} of the paperfolding sequence. Two subwords of
length \begin{math}n\end{math} are equivalent if they are
permutations of each other. In~\cite{Madill-Rampersad:2013}, the authors prove
that this sequence satisfies the recursion
\begingroup\allowdisplaybreaks
\begin{align*}
\rho(4n)&=\rho(2n),\\
\rho(4n+2)&=\rho(2n+1)+1,\\
\rho(16n+1)&=\rho(8n+1),\\
\rho(16n+3)&=\rho(2n+1)+2,\\
\rho(16n+5)&=\rho(4n+1)+2,\\
\rho(16n+7)&=\rho(2n+1)+2,\\
\rho(16n+9)&=\rho(2n+1)+2,\\
\rho(16n+11)&=\rho(4n+3)+2,\\
\rho(16n+13)&=\rho(2n+1)+2,\\
\rho(16n+15)&=\rho(2n+2)+1
\end{align*}\endgroup
with \begin{math}\rho(1)=2\end{math} and \begin{math}\rho(0)=0\end{math}. The constructed
transducer is shown in Figure~\ref{fig:paperfoldingTrans}. For simplicity, we do not
state the final output labels in this figure. The expected value and the variance are
\begin{align*}
\Expect(\rho(n))&=\frac8{13}\log_{2}N+\Psi_{1}(\log_{2}N)+\bigOh(N^{-\xi}\log N),\\
\Var(\rho(n))&=\frac{432}{2197}\log_{2}N-\Psi_{1}^{2}(\log_{2}N)+\Psi_{2}(\log_{2}N)+\bigOh(N^{-\xi}\log^{2}N)
\end{align*}
with $0<\xi<0.5604267891$, as the second largest eigenvalues of the
transition matrix are $-0.7718445063 \pm 1.1151425080\,i$. The
sequence \begin{math}\rho(n)\end{math} is asymptotically normally distributed. The functions
\begin{math}\Psi_{1}(x)\end{math} and \begin{math}\Psi_{2}(x)\end{math}
are \begin{math}1\end{math}-periodic and continuous.  The reset sequence of the transducer
is \begin{math}(00001)\end{math} (reading from right to left). The function \begin{math}\Psi_{1}(x)\end{math}
is nowhere differentiable and its Fourier series converges absolutely and uniformly. The first \begin{math}24\end{math} Fourier
 coefficients of \begin{math}\Psi_{1}(x)\end{math} are listed in
 Table~\ref{tab:paperfoldingCoeff}. In Figure~\ref{fig:paperfoldingFourier}, the trigonometric polynomial formed
with the first $2550$ Fourier coefficients is compared with the empirical
values of the function \begin{math}\Psi_{1}(x)\end{math} (see~\eqref{eq:emp-values}).
\end{example}
\begin{table}
  \centering\footnotesize
  \begin{equation*}\begin{array}{c|l|c|l}
    l&\hfill c_{l}\hfill&l&\hfill c_{l}\hfill\\\hline
    0  & \phantom{-}1.5308151288                 &12 & -0.0002297481+0.0009687657\,i\\
    1  & -0.0162585750+0.0478637218\,i&           13 & \phantom{-}0.0006425378+0.0006516706\,i\\
    2  & \phantom{-}0.0054521982+0.0075023586\,i& 14 & \phantom{-}0.0000413217-0.0003867709\,i\\
    3  & -0.0028294724+0.0086495903\,i&           15 & -0.0005632948-0.0001843541\,i\\
    4  & \phantom{-}0.0036818110+0.0021908312\,i& 16 & \phantom{-}0.0009051717-0.0000476354\,i\\
    5  & -0.0028244495+0.0014519078\,i&           17 & -0.0004621780-0.0000594551\,i\\
    6  & -0.0008962222+0.0030512180\,i&           18 & -0.0000127264-0.0003100798\,i\\
    7  & \phantom{-}0.0015033904+0.0013217107\,i& 19 & \phantom{-}0.0004112716+0.0001954204\,i\\
    8  & -0.0006766166-0.0015392566\,i&           20 & -0.0000011706+0.0004183253\,i\\
    9  & \phantom{-}0.0016074870-0.0000503663\,i& 21 & -0.0001027596+0.0004091624\,i\\
    10 & -0.0006908394+0.0018753575\,i&           22 & -0.0004725451+0.0004237489\,i\\
    11 & -0.0008974336+0.0007658455\,i&           23 & -0.0000596181+0.0002323317\,i  
  \end{array}\end{equation*}
  \caption{First $24$ Fourier coefficients of the abelian complexity
    function $\rho(n)$ of the paperfolding sequence.}
  \label{tab:paperfoldingCoeff}
\end{table}

\begin{figure}
  \centering
  \includegraphics{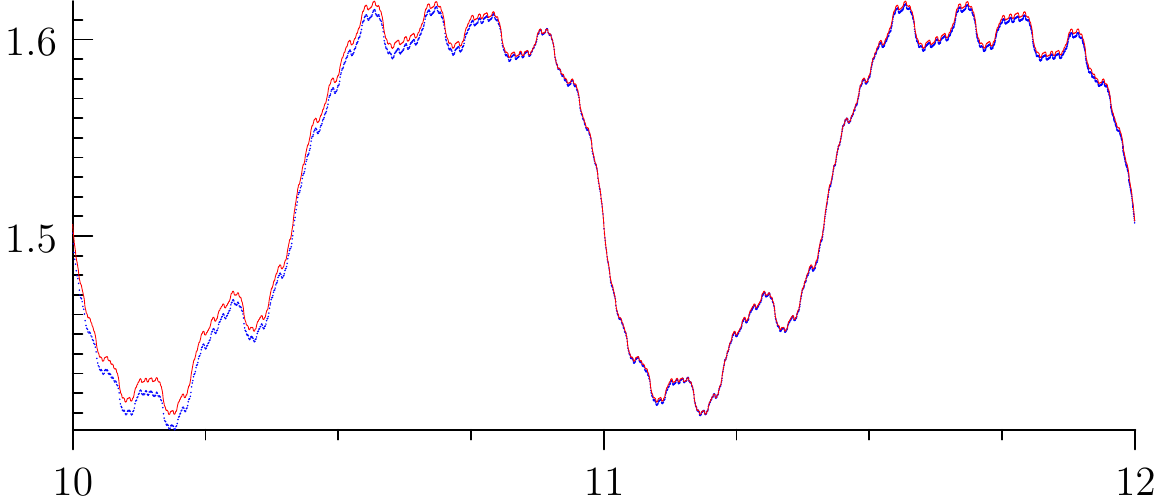}
  \caption{Partial Fourier series compared with the empirical values of
    $\Psi_1(x)$ of the abelian complexity function of the paperfolding sequence.}
  \label{fig:paperfoldingFourier}
\end{figure}

\section{Asymptotic Distribution --- Proof of
  Theorem~\ref{thm:asydist}}\label{sec:distribution}

This section contains some lemmas which will together imply Theorem~\ref{thm:asydist}.
Our plan is as follows: First, we give auxiliary lemmas
about the eigenvalues and eigenvectors of the transition matrix $M$ in
Section~\ref{sec:prel-from-lin-alg}. Section~\ref{sec:char-funct} contains an
asymptotic formula for the characteristic function of the random variable $\T(\bfn)$. We use this characteristic function to give
formulas for the expected value and the variance in Section~\ref{sec:moments}, and prove the continuity of
the periodic fluctuations in Section~\ref{sec:holder-continuity}. Finally, we prove the
central limit theorem in Section~\ref{sec:clt}. 

We use the notation \begin{math}(\bfeps_{L}\ldots\bfeps_{0})_{q}\end{math} for the standard
\begin{math}q\end{math}-ary joint digit representation of an integer vector
with \begin{math}\bfeps_{L}\neq0\end{math}. For a real number in
the interval \begin{math}[0,q)\end{math}, we
write \begin{math}(\eps_{0}\centerdot\eps_{1}\ldots)_{q}\end{math} for the \begin{math}q\end{math}-ary digit
representation choosing the representation ending on \begin{math}0^{\omega}\end{math} in the case
of ambiguity. Furthermore, we use Iverson's notation~\cite{Graham-Knuth-Patashnik:1994}: $[\mathit{expression}]$ is $1$ if
$\mathit{expression}$ is true and $0$ otherwise. All \begin{math}\bigOh\end{math}-constants depend only on
\begin{math}q\end{math}, \begin{math}d\end{math} and the number of states. 

\subsection{Transition Matrix and its Eigenvectors}\label{sec:prel-from-lin-alg}

This section contains the proofs of  some results on the eigenvalues, eigenvectors and eigenprojections
of the transition matrix $M$.

For the proof of Theorem~\ref{thm:asydist}, we use the following lemma which
describes the eigenvalues of a matrix in a similar way as the
Perron--Frobenius theorem (cf.~\cite{Godsil-Royle:2001:alggraphtheory}).

\begin{lemma}\label{lem:eigenvalues}
Let \begin{math}M\end{math} be a matrix with complex entries whose underlying directed graph is
\begin{math}p\end{math}-periodic and strongly connected. Then the set of non-zero eigenvalues of \begin{math}M\end{math} can be
partitioned into disjoint sets of cardinality \begin{math}p\end{math} where each set is invariant under
multiplication by \begin{math}e^{2\pi i/p}\end{math} and all eigenvalues in one set have
the same algebraic multiplicities.
\end{lemma}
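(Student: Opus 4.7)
The plan is to exploit the cyclic structure implied by $p$-periodicity together with strong connectivity, encoding it as a conjugation of $M$ by a diagonal matrix of $p$-th roots of unity. Setting $\omega = e^{2\pi i/p}$, the goal is to produce an invertible $D$ with $DMD^{-1} = \omega^{-1} M$; once this is in place, $M$ and $\omega M$ have the same characteristic polynomial, and the claimed partition of non-zero eigenvalues falls out by a short root-of-unity argument.

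First, I would establish the standard cyclic partition of the vertex set. Fix a base vertex $v_0$ and, for every other vertex $v$, define $c(v) \in \{0,1,\ldots,p-1\}$ to be the length modulo $p$ of any directed path from $v_0$ to $v$. Well-definedness is the only issue: if two such paths have lengths $\ell_1$ and $\ell_2$, then appending a path from $v$ back to $v_0$ (which exists by strong connectivity) of length $\ell_3$ produces two closed walks at $v_0$ of lengths $\ell_1+\ell_3$ and $\ell_2+\ell_3$; since every closed walk has length divisible by $p$ (as a non-negative integer combination of cycle lengths, all divisible by $p$), one obtains $\ell_1 \equiv \ell_2 \pmod p$. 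The resulting classes $V_i = c^{-1}(i)$ satisfy: whenever $M_{uv}\neq 0$, the edge from $u$ to $v$ forces $c(v) \equiv c(u)+1 \pmod p$.

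Second, define $D=\diag(\omega^{c(v)})_{v}$. Then for every pair $(u,v)$,
\begin{equation*}
(DMD^{-1})_{uv}=\omega^{c(u)-c(v)}M_{uv}.
\end{equation*}
When $M_{uv}\neq 0$ the exponent equals $-1$; when $M_{uv}=0$ the identity holds trivially. Hence $DMD^{-1}=\omega^{-1}M$, so $M$ and $\omega M$ are similar. Consequently their characteristic polynomials coincide, and for every eigenvalue $\lambda$ of $M$, $\omega\lambda$ is also an eigenvalue of $M$ with the same algebraic multiplicity.

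Third, the non-zero eigenvalues decompose into orbits under the action $\lambda\mapsto \omega\lambda$, and by the previous step all eigenvalues in one orbit share their algebraic multiplicity. It remains to show every orbit has cardinality exactly $p$. If $\lambda\neq 0$ has orbit of size $d$, then $d\mid p$ and $\omega^{d}\lambda=\lambda$, hence $\omega^{d}=1$; since $\omega$ is a primitive $p$-th root of unity this forces $p\mid d$ and thus $d=p$. I expect the only delicate point to be the well-definedness of $c(v)$ and the verification that the conjugation identity holds on the zero entries as well; once those bookkeeping matters are settled the algebraic conclusion is immediate.
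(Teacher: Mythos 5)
Your argument is correct, and it takes a genuinely different route from the paper. The paper's proof also begins from the cyclic vertex partition, but uses it to \emph{reorder} the vertices so that $M$ takes a block-cyclic form with blocks $A_1,\dots,A_p$; it then factors $M-xI$ as a product of two block-triangular matrices to compute the characteristic polynomial explicitly as $x^{n-pm}h(x^p)$, where $h$ is the characteristic polynomial of $\prod_{j=1}^pA_j$. The partition of the nonzero spectrum into $p$-element orbits is then read off from the $h(x^p)$ factor. You instead use the cyclic classes to build the diagonal matrix $D=\diag(\omega^{c(v)})$ and observe $DMD^{-1}=\omega^{-1}M$, so $M$ and $\omega M$ are similar; this avoids the block-matrix computation entirely and is arguably cleaner for this lemma. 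The one thing the paper's route gives for free that yours does not is the identification of the nonzero eigenvalues of $M$ as precisely the $p$-th roots of the nonzero eigenvalues of $\prod A_j$, but that extra information is not used later, so nothing is lost. Your treatment of the well-definedness of $c(v)$ and of the orbit-size argument is complete; the only point worth being explicit about is that a closed walk decomposes into a multiset of (simple) directed cycles, which you correctly invoke, so closed-walk lengths are indeed multiples of $p$.
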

\begin{proof}
Since the underlying directed graph of \begin{math}M\in\complexes^{n\times n}\end{math} is a strongly
connected, \begin{math}p\end{math}-periodic graph, we can write \begin{math}M\end{math} as
\begin{equation*}
M=
\begin{pmatrix}
0      &A_{2}  &0     &\cdots&0     \\
\vdots &\ddots&A_{3  }&\ddots&\vdots\\
\vdots &      &\ddots&\ddots&0     \\
0      &      &      &\ddots&A_{p}  \\
A_{1}   &0     &\cdots&\cdots&0
\end{pmatrix}
\end{equation*}
with block matrices \begin{math}A_{i}\end{math} by reordering the
vertices. Then
\begin{math}
  M-xI
\end{math}
is the product of the matrices
\begin{equation*}
\begin{pmatrix}
-xI    &0     &\cdots&\cdots&0     \\
0      &\ddots&\ddots&      &\vdots\\
\vdots &\ddots&\ddots&\ddots&\vdots\\
0      &\cdots&0     &-xI   &0    \\
A_{1}
&\frac1x\prod_{j=1}^{2}A_{j}&\cdots&\frac1{x^{p-2}}\prod_{j=1}^{p-1}A_{j}&\frac1{x^{p-1}}\prod_{j=1}^{p}A_{j}-xI\end{pmatrix}
\end{equation*} and 
\begin{equation*}
\begin{pmatrix}
I      &-\frac1xA_{2}&0     &\cdots&0     \\
0      &\ddots    &\ddots&\ddots&\vdots\\
\vdots &\ddots    &\ddots&\ddots&0     \\
\vdots &          &\ddots&I     &-\frac1xA_{p}\\
0      &\cdots    &\cdots&0     &I
\end{pmatrix}.
\end{equation*}
Let \begin{math}h(x)\end{math} be the characteristic polynomial of \begin{math}\prod_{j=1}^{p}A_{j}\in
\complexes^{m\times m}\end{math}. Thus
the characteristic polynomial of \begin{math}M\end{math} is \begin{math}x^{n-m-(p-1)m}h(x^{p})\end{math}. Therefore,
the eigenvalues of \begin{math}M\end{math} are either \begin{math}0\end{math} or any \begin{math}p\end{math}-th root of a non-zero
eigenvalue of \begin{math}\prod_{j=1}^{p}A_{j}\end{math}.
\end{proof}

With this lemma, we can prove Lemma~\ref{lem:eigenvalues-M} about the
eigenvalues of the matrix $M$:

\begin{proof}[Proof of Lemma~\ref{lem:eigenvalues-M}]
  First, consider the case
\begin{math}t=0\end{math}. By construction, \begin{math}q^{d}\end{math}
is an eigenvalue with right eigenvector \begin{math}\bfones\end{math} of
\begin{math}M\end{math}. As \begin{math}\|M\|_{\infty}\leq q^{d}\end{math},
where $\|\,\cdot\,\|_{\infty}$ denotes the row sum norm, \begin{math}q^{d}\end{math} is a dominant eigenvalue. 

Consider the strongly connected components of the underlying graph of
\begin{math}\T\end{math}. Each final strongly connected component \begin{math}C_{j}\end{math} induces a final transducer \begin{math}\T_{j}\end{math} which is strongly
connected, complete, deterministic
and \begin{math}p_{j}\end{math}-periodic. Thus, the adjacency matrix at \begin{math}t=0\end{math} of this final transducer  has
a dominant
eigenvalue \begin{math}q^{d}\end{math} with right
eigenvector \begin{math}\bfones\end{math}. By the
Perron--Frobenius theorem (cf.~\cite[Theorem~8.8.1]{Godsil-Royle:2001:alggraphtheory}), all dominant eigenvalues
of this final transducer are \begin{math}\{q^{d}e^{2\pi il/p}\mid
  l\in\calP \text{ with }p\mid lp_{j}\}\end{math}, each with
algebraic and geometric multiplicity one.

A non-final strongly connected component induces a transducer
\begin{math}\mathcal S\end{math} with the adjacency
matrix \begin{math}S\end{math}. This transducer is not
complete. Let \begin{math}\mathcal S^{+}\end{math} be the complete
transducer where loops are added to states of \begin{math}\mathcal S\end{math}
where necessary.  The adjacency matrix of \begin{math}\mathcal S^{+}\end{math}
is \begin{math}S^{+}\end{math}. Since \begin{math}\mathcal S^{+}\end{math} is
complete, deterministic and strongly connected,
\begin{math}\rho(S^{+})=q^{d}\end{math}. As \begin{math}S\leq S^{+}\end{math} but \begin{math}S\neq
S^{+}\end{math}, Theorem 8.8.1 in~\cite{Godsil-Royle:2001:alggraphtheory} implies \begin{math}\rho(S)<\rho(S^{+})=q^{d}\end{math}.

Thus, the dominant eigenvalues are $q^{d}e^{2\pi il/p}$ with an $l\in\calP$
such that there exists a $j\in\{1,\ldots,c\}$ with $p\mid lp_{j}$. We determine the geometric multiplicities of these dominant eigenvalues of $M$
in Lemma~\ref{lem:diagonalization}.

Now, fix a final strongly connected component $C_j$ and some $l\in\calP$ with $p\mid lp_{j}$. In a small neighborhood of \begin{math}t=0\end{math},
let \begin{math}\mu_{lj}(t)\end{math} be the eigenvalue of the submatrix
of \begin{math}M\end{math} corresponding to the complete transducer \begin{math}\T_{j}\end{math} with
\begin{math}\mu_{lj}(0)=q^{d}e^{2\pi i l/p}\end{math}. Because of Lemma~\ref{lem:eigenvalues} applied to
the final component $C_j$ separately, we have
\begin{math}\mu_{lj}(t)=e^{2\pi i l/p}\mu_{j}(t)\end{math}
where $\mu_j(t)$ is defined to be $\mu_{0j}(t)$.

All other moduli of eigenvalues of \begin{math}M\end{math} are less than
\begin{math}\min_{l, j}|\mu_{lj}(t)|\end{math} because of the continuity of
eigenvalues.

We prove the differentiability of the eigenvalues in Lemma~\ref{lem:diagonalization}.
\end{proof}

\begin{lemma}\label{lem:diagonalization}
Let $\mu_{j}(t)\exp(\frac{2\pi il}{p})$ be a dominant eigenvalue of the matrix
$M$. There exists a corresponding left eigenvector of $M$ with zero entries
except in coordinates corresponding to the final component $C_{j}$.

At $t=0$, the algebraic and geometric multiplicities of $q^{d}\exp(\frac{2\pi
  il}{p})$ coincide. 

Furthermore the eigenvalues and the eigenprojection corresponding to the
eigenvalues $\mu_{j}\exp(\frac{2\pi il}{p})$  are
analytic at $t=0$.
\end{lemma}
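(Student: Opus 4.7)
The plan is to exploit the block-triangular structure of $M$ induced by the condensation of the underlying digraph. Ordering the states so that all non-final states precede all final states, and grouping each final component $C_{j}$ together, we obtain
\begin{equation*}
M(t)=\begin{pmatrix} N(t) & B(t) \\ 0 & F(t) \end{pmatrix},\qquad F(t)=\diag(F_{1}(t),\dots,F_{c}(t)),
\end{equation*}
where $F_{j}(t)$ is the transition matrix of the complete, strongly connected transducer on $C_{j}$, and the zero block reflects that no edge leaves a final component. The argument already used in the proof of Lemma~\ref{lem:eigenvalues-M} (entrywise domination $|N(t)|\le N(0)$ combined with $\rho(N(0))<q^{d}$) shows $\rho(N(t))<q^{d}$ throughout, so every dominant eigenvalue $\lambda=\mu_{j}(t)\exp(2\pi il/p)$ of $M(t)$ lies outside the spectrum of $N(t)$.

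Parts~(1) and~(2) then follow directly from this block form. Any left eigenvector $(\bfv_{N}^{\top},\bfv_{F}^{\top})$ of $M$ for such a $\lambda$ satisfies $\bfv_{N}^{\top}N=\lambda\bfv_{N}^{\top}$; since $\lambda\notin\operatorname{spec}(N)$ we conclude $\bfv_{N}^{\top}=\mathbf{0}$, and the remaining equation $\bfv_{F}^{\top}F=\lambda\bfv_{F}^{\top}$ together with the block-diagonal form of $F$ lets us choose $\bfv_{F}$ supported on a single block $F_{j}$. Extending by zeros gives a left eigenvector of $M$ supported on $C_{j}$, proving~(1). For~(2), the factorization $\det(M-xI)=\det(N-xI)\prod_{j}\det(F_{j}-xI)$ shows that the algebraic multiplicity of $\lambda$ in $M$ equals $\sum_{j:\,p\mid lp_{j}}m_{j}$, where $m_{j}$ is its algebraic multiplicity in $F_{j}$; the bijection $\bfv_{F}^{\top}\mapsto(\mathbf{0}^{\top},\bfv_{F}^{\top})$ between left eigenspaces similarly yields $\sum_{j:\,p\mid lp_{j}}g_{j}$ for the geometric multiplicity in $M$. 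At $t=0$ the Perron--Frobenius argument already invoked in the proof of Lemma~\ref{lem:eigenvalues-M} forces each dominant eigenvalue of each non-negative, strongly connected $F_{j}(0)$ to be simple ($m_{j}=g_{j}=1$), so algebraic and geometric multiplicities coincide in $M$ as well.

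For part~(3), I would invoke standard analytic perturbation theory (as in Kato, \emph{Perturbation Theory for Linear Operators}). The family $M(t)$ is entire in $t$, and at $t=0$ each $\mu_{j}(0)\exp(2\pi il/p)=q^{d}\exp(2\pi il/p)$ is a simple eigenvalue of $F_{j}(0)$, hence, by continuity, strictly separated from the spectra of $N(t)$ and of the other blocks $F_{k}(t)$ for $t$ near $0$. The Dunford--Riesz integral
\begin{equation*}
P_{j,l}(t)=\frac{1}{2\pi i}\oint_{\Gamma}(zI-F_{j}(t))^{-1}\,dz
\end{equation*}
along a small contour $\Gamma$ enclosing only $q^{d}\exp(2\pi il/p)$ defines an analytic rank-one projection on the $F_{j}$-block, and $\operatorname{tr}(F_{j}(t)P_{j,l}(t))$ produces the analytic branch $\mu_{j}(t)\exp(2\pi il/p)$. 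This lifts to an analytic eigenprojection of $M(t)$: on the left the lift is just extension by zero, while on the right the off-diagonal component is given by the analytic formula $\bfv_{N}=(\lambda I-N(t))^{-1}B(t)\bfv_{F}$, which is valid throughout the neighborhood because $\lambda I-N(t)$ remains invertible.

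The main technical point that I expect to need care is not the analyticity itself---once the spectral gap and simplicity at $t=0$ are in place, this is a direct application of the classical perturbation theorems---but rather the bookkeeping across the several objects involved: coordinating left and right eigenprojections, matching them across $F_{j}$ and $M$, and identifying the resulting analytic projection with the operator used to define $\bfw_{l}^{\top}(t)$ in Section~\ref{sec:prop-trans-matr}. Only this last identification legitimates taking the derivative $\bfw_{l}'^{\top}$ appearing in~\eqref{eq:gen-fourier-coeff} and in the remainder of the paper.
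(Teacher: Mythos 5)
Your proof follows essentially the same route as the paper's: both exploit the block--triangular decomposition of $M$ coming from the condensation (final components versus the rest), use the Perron--Frobenius simplicity of the dominant eigenvalue in each final component $\T_j$ to construct left eigenvectors supported on a single block by padding with zeros, count these eigenvectors against the algebraic multiplicity established in Lemma~\ref{lem:eigenvalues-M}, and invoke Kato for the analyticity of the eigenvalues and eigenprojections. Your version is somewhat more explicit at a few points (the deduction $\bfv_{N}^{\top}=\mathbf{0}$ from $\lambda\notin\operatorname{spec}(N(t))$, the factorization of the characteristic polynomial, the Dunford--Riesz integral, and the formula $\bfv_N=(\lambda I-N)^{-1}B\bfv_F$ for the right eigenvector), but these amount to unpacking the same argument rather than changing it. One small point worth tightening: $\rho(N(t))<q^{d}$ by itself does not yet place the dominant eigenvalues of $M(t)$ outside $\operatorname{spec}(N(t))$, since $|\mu_j(t)|$ can itself drop below $q^{d}$ for $t\neq 0$; you need the continuity argument (as the paper also uses) that near $t=0$ each $|\mu_j(t)|$ remains close to $q^{d}$ and hence strictly exceeds the uniformly bounded $\rho(N(t))$.
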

\begin{proof} Let $q^{d}\exp(\frac{2\pi il}{p})$ be a dominant eigenvalue of
  $M$. Its algebraic multiplicity at $t=0$ is $|\{j: p\mid lp_{j}\}|$.
We construct  exactly one
left eigenvector in the neighborhood of $t=0$ for each final component $C_{j}$ with $p\mid lp_{j}$:
Let $\T_{j}$ be the induced transducer of the final component $C_{j}$.
Let $\tilde\bfv^{\top}(t)$ be a left eigenvector of the adjacency matrix of $\T_{j}$ corresponding to the eigenvalue
$\mu_{j}(t)\exp(\frac{2\pi il}{p})$. As the algebraic multiplicity is $1$ in this final
component,
the choice of $\tilde\bfv^{\top}(t)$ is unique up to multiplication with a
scalar function in $t$.
 Then, we construct the left eigenvector $\bfv^{\top}(t)$ by padding
 $\tilde\bfv^{\top}(t)$ with zeros. 

These left eigenvectors are linearly independent because of the block
structure induced by the final components. Thus the
geometric and the algebraic multiplicities of $q^{d}\exp(\frac{2\pi il}{p})$
coincide.

Furthermore, $\mu_{j}(t)\exp(\frac{2\pi il}{p})$ is a simple eigenvalue of the
adjacency matrix of $\T_{j}$. Therefore, \cite[Chapter II]{Kato:1976:pertur-theor-linear-operat} implies the differentiability of
the eigenvalues and eigenprojections.
\end{proof}

From now on, we use the convention that the eigenspace corresponding to
$\mu_{j}(t)\exp(\frac{2\pi il}{p})$ is the null space if
$\mu_{j}(t)\exp(\frac{2\pi il}{p})$ is not an eigenvalue. Then its
eigenprojection is the constant null function.
  
\begin{definition}\label{def:eigenprojections-of-e1}
  Let $\bfw_{lj}^{\top}(t)$ be the eigenprojection of $\bfe_{1}^{\top}$ onto the left
  eigenspace corresponding to the possible eigenvalue
  $\mu_{j}(t)\exp(\frac{2\pi il}{p})$. The vector $\bfw_{lj}^{\top}(t)$ is thus a null vector or
  a left eigenvector of $M$ corresponding to the eigenvalue
  $\mu_{j}(t)\exp(\frac{2\pi il}{p})$.

Define
\begin{equation*}
  \bfw^{\top}(t)=\bfe_{1}^{\top}-\sum_{l\in\calP}\sum_{j=1}^{c}\bfw_{lj}^{\top}(t).
\end{equation*}

As an abbreviation, we write $\bfw_{lj}^{\top}$, $\bfw^{\top}$, $\bfw_{lj}'^{\top}$ and $\bfw'^{\top}$ for these projections and
their derivatives at $t=0$.
\end{definition}

\begin{remark}\label{rem:vanishingerror}
If there are only dominant eigenvalues, then $\bfw^{\top}(t)=0$. This will
imply that there is no error term in the asymptotic expansion of the expected
value and the variance. This occurs in the case of the sum of digits of the standard \begin{math}q\end{math}-ary
digit representation and other completely \begin{math}q\end{math}-additive
functions because the transducer has only one state.
\end{remark}

\begin{lemma}
  \label{lem:error-eigenvector}
  In a fixed neighborhood of $t=0$, let $\xi>0$ be as
  defined in~\eqref{eq:const-of-thm}, i.e., all non-dominant
  eigenvalues have modulus less than $q^{d-\xi}$. Then 
  \begin{equation*}
\Big\|\frac{d^{k}}{d t^{k}}\bfw^{\top}(t) M^{m}\Big\| =\bigOh(c_{k}^{(1)}q^{(d-\xi)(m-k)}m^{k})
\end{equation*}
for $m$, $k\geq 0$ and a constant $c_{k}^{(1)}$.
\end{lemma}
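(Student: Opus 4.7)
The strategy is to rewrite $\bfw^{\top}(t)\,M(t)^{m}$ using the spectral projection onto the non-dominant invariant subspace, and then bound the $k$-th derivative via Leibniz's rule. First I would set $Q(t):=I-\sum_{l\in\calP}\sum_{j=1}^{c}P_{lj}(t)$, where $P_{lj}(t)$ denotes the spectral projection of $M(t)$ onto the generalized left eigenspace for $\mu_{j}(t)\exp(2\pi il/p)$. By Definition~\ref{def:eigenprojections-of-e1} we have $\bfw_{lj}^{\top}(t)=\bfe_{1}^{\top}P_{lj}(t)$, hence $\bfw^{\top}(t)=\bfe_{1}^{\top}Q(t)$. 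Since spectral projections commute with the operator and $Q(t)^{2}=Q(t)$, the identity $\bigl(M(t)Q(t)\bigr)^{m}=M(t)^{m}Q(t)$ holds, giving
\begin{equation*}
\bfw^{\top}(t)\,M(t)^{m}=\bfe_{1}^{\top}N(t)^{m}\qquad\text{where}\quad N(t):=M(t)\,Q(t).
\end{equation*}

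Next I would establish the uniform bound $\|N(t)^{j}\|\le C\,q^{(d-\xi)j}$ for all $j\ge 0$ and all $t$ in a fixed neighborhood of $0$. The non-zero spectrum of $N(t)$ is precisely the set of non-dominant eigenvalues of $M(t)$. By Lemma~\ref{lem:eigenvalues-M} and continuity of the spectrum, some radius $r<q^{d-\xi}$ bounds all these eigenvalues uniformly on a small neighborhood of $t=0$, and the Cauchy representation
$N(t)^{j}=\tfrac{1}{2\pi i}\oint_{\Gamma}z^{j}\,(zI-N(t))^{-1}\,dz$
around a circle $\Gamma$ of radius $r$ then yields the claim; the resolvent is uniformly bounded on $\Gamma$ for $t$ in the neighborhood, absorbing any Jordan-block polynomial factors. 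Since the entries of $M(t)$ are trigonometric polynomials in $t$, each derivative $N^{(r)}(t)$ is also uniformly bounded on this neighborhood.

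Finally I would expand by Leibniz's rule applied to the product of $m$ matrix factors,
\begin{equation*}
\frac{d^{k}}{dt^{k}}N(t)^{m}=\sum_{\substack{k_{1}+\cdots+k_{m}=k\\ k_{i}\ge 0}}\binom{k}{k_{1},\ldots,k_{m}}N^{(k_{1})}(t)\cdots N^{(k_{m})}(t),
\end{equation*}
and bound each summand. In a summand with exactly $s\le k$ positive indices $k_{i}$, group consecutive undifferentiated factors into runs; each run of length $\ell$ contributes $\bigOh(q^{(d-\xi)\ell})$ by the preceding bound, while each of the $s$ differentiated factors contributes a bounded constant. As the total run length equals $m-s$, each such summand is $\bigOh(q^{(d-\xi)(m-s)})$. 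The number of such summands is at most $\binom{m}{s}$ (choice of positions) times the number of compositions of $k$ into $s$ positive parts, which is $\bigOh(m^{s})$ for fixed $k$. Summing $s=0,\ldots,k$ and using $q^{d-\xi}>1$ to identify $s=k$ as the dominant contribution gives $\bigOh(m^{k}q^{(d-\xi)(m-k)})$, which is the claim after the trivial factor $\|\bfe_{1}^{\top}\|=1$.

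The main obstacle is the uniform spectral bound of the second step: one must verify that the Cauchy contour and the resolvent norm can be chosen uniformly for $t$ in a neighborhood of $0$, relying on continuity of $M(t)$ and of its spectrum established in Lemma~\ref{lem:eigenvalues-M}. Once this is in hand, the Leibniz expansion together with the counting of compositions is routine bookkeeping.
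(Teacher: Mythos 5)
Your proof is correct and follows essentially the same decomposition as the paper's: writing $\bfw^{\top}(t)M(t)^{m}=\bfe_{1}^{\top}\bigl((I-P(t))M(t)\bigr)^{m}$ and exploiting that the spectral radius of $(I-P(t))M(t)$ stays below $q^{d-\xi}$. The paper's version is considerably terser --- it states the decomposition and the spectral radius bound and concludes ``we obtain the stated estimates'' --- whereas you spell out the Cauchy-contour uniformity argument and the Leibniz bookkeeping that the paper leaves implicit.
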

\begin{proof}
  Let $P$ be the matrix such that $x^{\top}\mapsto x^{\top}P$ is the sum of the eigenprojections onto the left eigenspaces corresponding
  to $\mu_{j}\exp(\frac{2\pi il}{p})$ for $j=1$, \dots, $c$
  and $l\in\calP$. Then
  $\bfw^{\top}=\bfe_{1}^{\top}(I-P)$ and
  \begin{equation*}
    \bfw^{\top}M^{m}=\bfe_{1}^{\top}((I-P)M)^{m}.
  \end{equation*}
  As the spectral radius of $(I-P)M$ is less than $q^{d-\xi}$, we obtain the stated estimates.
\end{proof}

With $\bfw_{l}^{\top}$ defined in Section~\ref{sec:prop-trans-matr}, we have
\begin{equation}\label{eq:w-l}
  \bfw_{l}^{\top}(t)=\sum_{j=1}^{c}\bfw_{lj}^{\top}(t).
\end{equation}

Note that left and right eigenvectors corresponding to different eigenvalues
annihilate each other.
Because of the block structure of the eigenvectors in
Lemma~\ref{lem:diagonalization} and because $\bfones$ is a right eigenvector to $q^{d}$, we have
\begin{equation}\label{eq:lambda-or-0}
  [l=0]\lambda_{j}=\bfw_{lj}^{\top}\bfones
\end{equation}
where $\lambda_{j}$ is
defined in Section~\ref{sec:prop-trans-matr}.
Furthermore, $\bfw^{\top}\bfones=0$ and
\begin{equation*}
\sum_{j=1}^{c}\lambda_{j}=\sum_{l\in\calP}\sum_{j=1}^{c}\bfw_{lj}^{\top}\bfones+\bfw^{\top}\bfones=\bfe_{1}^{\top}\bfones=1.\end{equation*}  

Denote by $\bfdelta$ the vector whose $s$-th component is the sum of the outputs of all
transitions leaving the state $s$. By the definition of the transition matrix
$M(t)$, $\bfdelta$ can be expressed  as
\begin{equation}\label{eq:bfdelta-is-derivative}
  i\bfdelta = \left.\frac{d}{dt}M(t)\bfones\right|_{t=0}.
\end{equation}
We now establish a relation between $\bfdelta$, the left eigenvector $\bfw_l^\top$
and its derivative at $t=0$. By definition of the left eigenvectors $\bfw_{lj}^\top(t)$ and \eqref{eq:w-l},
\begin{equation*}
\bfw_{l}^\top(t)
  M\bfones=\sum_{j=1}^{c}\mu_{j}(t)\exp\Bigl(\frac{2\pi
    il}{p}\Bigr)\bfw_{lj}^\top(t)\bfones.
\end{equation*}
Differentiation, \eqref{eq:lambda-or-0}, \eqref{eq:const-of-thm} and
\eqref{eq:w-l} yield
\begin{equation}\label{eq:bfw_bfdelta}
  \bfw_{l}^\top \bfdelta=[l=0]e_{\T}q^{d}-q^{d}\big(e^{\frac{2\pi i l}{p}}-1\big)i\bfw_{l}'^{\top}\bfones.
\end{equation}

To establish the interpretation of $\bfw_{0}^{\top}$ given at the end of
Section~\ref{sec:prop-trans-matr}, we consider
\begin{equation*}
  \hat\bfw_{k}^{\top}:=
 \lim_{m\rightarrow\infty}\bfe_{1}^{\top}M^{mp+k}q^{-d(mp+k)},
\end{equation*}
the stationary distribution on the state space of all states of the transducer
under the assumption that the input length is congruent to $k$ modulo $p$. Using
\eqref{eq:w-l} and Lemma~\ref{lem:error-eigenvector} yields
\begin{align*}
 \hat\bfw_{k}^{\top}
 &=\lim_{m\to\infty}\Big(\sum_{l\in\calP}\bfw_{l}^{\top}+\bfw^{\top}\Big)M^{mp+k}q^{-d(mp+k)}\\
 &=\lim_{m\to\infty}\sum_{l\in\calP}\exp\Big(\frac{2\pi ilk}{p}\Big)\bfw_{l}^{\top}+\bigOh(q^{-\xi(mp+k)})\\
 &=\sum_{l\in\calP}\exp\Big(\frac{2\pi
 ilk}{p}\Big)\bfw_{l}^{\top}.
\end{align*}
Summation leads to
$\frac
1p\sum_{k=0}^{p-1}\hat\bfw_{k}^{\top}=\bfw_{0}^{\top}$.
Thus, $\lambda_{j}$ is the hitting probability of the final component $C_{j}$
when starting in the initial state. As every state is accessible from the
initial state, $\lambda_{j}$ is
positive.

Finally, for $l=0$, \eqref{eq:bfw_bfdelta} reads $q^{-d}\bfw_0^\top
\bfdelta=e_\T$, which can be interpreted as the steady state analysis of the
expectation: the probability distribution $\bfw_0^\top$ is multiplied with the
expected output $q^{-d}\bfdelta$.

\subsection{Characteristic function}\label{sec:char-funct}

To obtain a central limit law in Section~\ref{sec:clt}, we compute an asymptotic formula for the
characteristic function in this section.

The next lemma can be proved by
induction on \begin{math}L\end{math}. It is a generalization of Lemma~3 in~\cite{Heuberger-Kropf:2013:analy}.
\begin{lemma}\label{lem:rec}
Let \begin{math}A_{\eps}\end{math}, \begin{math}\eps=0,\ldots,q-1\end{math} be matrices in \begin{math}\complexes^{n\times n}\end{math}, \begin{math}H_{\eps}\colon\naturals_{0}\to
\complexes^{n\times n}\end{math} be known functions
with \begin{math}H_{0}(0)=0\end{math}. Let \begin{math}G\colon\naturals_{0}\to\complexes^{n\times n}\end{math} be a
function which satisfies the recurrence relation
\begin{equation*} G(qN+\eps)=A_\eps G(N) + H_{\eps}(N) \end{equation*}
for \begin{math}N\ge
  0\end{math}, \begin{math}\eps\in\{0,\ldots,q-1\}\end{math} and $G(0)=0$. Then
\begin{equation*}G\left((\eps_{L}\ldots\eps_{0})_{q}\right)=\sum_{l=0}^L\biggl(\prod_{i=0}^{l-1}A_{\eps_i}\biggr)H_{\eps_{l}}\left((\eps_{L}\ldots\eps_{l+1})_{q}\right).\end{equation*}
\end{lemma}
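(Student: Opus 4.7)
The plan is to proceed by induction on $L$, as suggested. The recurrence $G(qN+\eps)=A_\eps G(N)+H_\eps(N)$ expresses the value at a number with an extra least-significant digit in terms of the value at the number obtained by dropping that digit. Unrolling this recursion $L+1$ times peels off one digit at a time: each step accumulates one factor $A_{\eps_i}$ on the left of everything produced so far, and contributes one summand $H_{\eps_l}$ evaluated at the remaining higher-order digits. The claimed closed form is exactly what one gets by formalising this unrolling.

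For the base case $L=0$, one application of the recurrence with $N=0$ gives $G(\eps_0)=A_{\eps_0}G(0)+H_{\eps_0}(0)=H_{\eps_0}(0)$ from $G(0)=0$. The right-hand side of the claimed identity reduces to $H_{\eps_0}(0)$ under the usual conventions that the empty product equals the identity matrix and the empty $q$-ary word represents~$0$. The hypothesis $H_0(0)=0$ is exactly what keeps these conventions compatible with $G(0)=0$ and lets any potential leading-zero summand drop out harmlessly.

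For the induction step from $L-1$ to $L$, I would write
\begin{equation*}
  N=(\eps_L\ldots\eps_0)_q=qM+\eps_0,\qquad M=(\eps_L\ldots\eps_1)_q,
\end{equation*}
apply the recurrence once to obtain $G(N)=A_{\eps_0}G(M)+H_{\eps_0}(M)$, and then invoke the induction hypothesis on $M$ after reindexing the digits via $\eps'_j=\eps_{j+1}$ for $j=0,\ldots,L-1$. This expresses $G(M)$ as a sum over $l=0,\ldots,L-1$. Multiplying by $A_{\eps_0}$ merges this leading factor into each product (using $A_{\eps_0}\prod_{i=0}^{l-1}A_{\eps_{i+1}}=\prod_{i=0}^{l}A_{\eps_i}$) and, after the summation shift $l\mapsto l+1$, rewrites $A_{\eps_0}G(M)$ as the partial sum over $l=1,\ldots,L$ of the claimed formula. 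The remaining term $H_{\eps_0}(M)$ is precisely the missing $l=0$ summand, since the empty product there equals the identity and $(\eps_L\ldots\eps_1)_q=M$.

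The whole argument is just recursive unrolling; the only genuine task is careful index bookkeeping in the shift step and consistent handling of the edge cases (empty product, empty digit word), so I do not expect any real obstacle.
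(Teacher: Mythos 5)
Your proof is correct and is exactly the argument the paper intends: the paper gives no explicit proof of this lemma, merely stating that it "can be proved by induction on $L$," and your unrolling-of-the-recursion induction is the natural realization of that hint. The base case, the reindexing, and the observation that $H_0(0)=0$ is what makes the formula insensitive to leading zeros (and consistent with $G(0)=0$) are all handled correctly.
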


The solution of this recursion finally leads to an asymptotic formula for the
characteristic function.

We choose the
branch \begin{math}-\pi+\frac{\pi}{p}<\arg z\leq\pi+\frac{\pi}{p}\end{math} of the complex logarithm. After
setting \begin{math}t=0\end{math}, we use only the logarithm of complex
numbers for which our branch coincides the principal
branch \begin{math}-\pi<\arg z\leq\pi\end{math}.

\begin{lemma}\label{lem:char-func}
  The characteristic function of the random variable $\T(\bfn)$ is
\begin{multline*}\mathbb E(\exp(it\T(\bfn)))=\\
\frac{1}{N^{d}}\sum_{l\in\calP}\sum_{j=1}^{c}\mu_{j}(t)^{\log_{q}N}\exp\Big(\frac{2\pi
      i
      l\log_{q}N}{p}\Big)\Psi_{lj}(\log_{q}N,t)+R(N,t)
\end{multline*}
with functions $\Psi_{lj}(x,t)$ (defined in~\eqref{eq:psi-jk}), which are arbitrarily
often differentiable in $t$ and $1$-periodic in $x$, and an error term $R(N,t)$. This error term $R(N,t)$
is arbitrarily often differentiable, too, and satisfies
$\frac{d^{k}}{d t^{k}}R(N,t)=\bigOh(c_{k}^{(2)}N^{-\xi}\log^{k}N)$, for
$k\geq 0$, a constant $c_{k}^{(2)}$ and the constant $\xi>0$ defined in
Section~\ref{sec:prop-trans-matr}, in a neighborhood of $t=0$. At $t=0$, we
have $R(N,0)=0$.
\end{lemma}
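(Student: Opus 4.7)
The plan is to set up a vector-valued recurrence for the sum of characteristic exponentials, solve it via Lemma~\ref{lem:rec}, and then extract the dominant contribution via the spectral decomposition of $M(t)$.

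Define the vector-valued function $F(N)$ by
$$F(N)_s := \sum_{\bfn<N\bfones}\exp\bigl(it\,\T_s(\bfn)\bigr)$$
(one component per state $s$), where $\T_s(\bfn)$ is the total output (transitions plus final output) generated by starting at state $s$ and reading the joint $q$-ary expansion of $\bfn$. The target expectation is $\bfe_1^\top F(N)/N^d$. Writing $\bfn = q\bfn'+\bfeps'$ with $\bfeps'\in\{0,\dots,q-1\}^d$, the constraint $\bfn<(qN+\epsilon)\bfones$ forces $n'_i\in\{0,\dots,N\}$, with $\epsilon'_i<\epsilon$ whenever $n'_i = N$. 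Separating the interior $\bfn'<N\bfones$ (where the identity $\T_s(q\bfn'+\bfeps') = \delta_{s,\bfeps'}+\T_{s'(s,\bfeps')}(\bfn')$ yields precisely $M(t)F(N)$) from the boundary (handled by inclusion-exclusion over subsets of coordinates attaining $n'_i = N$) produces
$$F(qN+\epsilon) = M(t)F(N) + H_\epsilon(N,t), \qquad F(0) = 0,$$
with $H_\epsilon(\cdot,t)$ analytic in $t$, vanishing identically for $\epsilon = 0$, and of size $\bigOh(N^{d-1})$.

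Applying Lemma~\ref{lem:rec} with $A_\epsilon\equiv M(t)$ to the $q$-ary expansion $N=(\epsilon_L\dots\epsilon_0)_q$ gives
$$F(N) = \sum_{l=0}^{L}M(t)^l\,H_{\epsilon_l}\!\bigl((\epsilon_L\dots\epsilon_{l+1})_q,t\bigr).$$
I then decompose $\bfe_1^\top = \sum_{k\in\calP}\sum_{j=1}^c\bfw_{kj}^\top(t) + \bfw^\top(t)$ as in Definition~\ref{def:eigenprojections-of-e1}. Since $\bfw_{kj}^\top M^l = \mu_j(t)^l\exp(2\pi ikl/p)\,\bfw_{kj}^\top$, the $(k,j)$-contribution factorises, after setting $m = L-l$, as
$$\mu_j(t)^L\exp\!\Bigl(\frac{2\pi ikL}{p}\Bigr)\sum_{m=0}^{L}\mu_j(t)^{-m}\exp\!\Bigl(-\frac{2\pi ikm}{p}\Bigr)\bfw_{kj}^\top H_{\epsilon_{L-m}}\!\bigl(\lfloor N/q^{L-m+1}\rfloor,t\bigr).$$
A size estimate shows each term of the $m$-sum is $\bigOh(q^{-m}N^d)$ uniformly for $t$ near $0$, so extending the sum to $m = \infty$ introduces only exponentially small error. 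Crucially, the resulting infinite sum is invariant under $N\mapsto qN$: the shift appends a $0$ at the least significant position, whose contribution vanishes because $H_0\equiv 0$, and the remaining summands are merely reindexed. Hence, after the rewrite $\mu_j(t)^L = \mu_j(t)^{\log_q N}\mu_j(t)^{-\{\log_q N\}}$ and the analogous splitting of the exponential factor, the expression is identified with $N^d$ times a function $\Psi_{kj}(\log_q N,t)$ that is $1$-periodic in its first argument. Arbitrary differentiability in $t$ transfers from the analyticity of $\mu_j(t)$, of each $\bfw_{kj}^\top(t)$ (Lemma~\ref{lem:diagonalization}) and of $H_\epsilon(\cdot,t)$.

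The non-dominant contribution $\bfw^\top(t)\sum_l M^l H_{\epsilon_l}(\cdot)$ constitutes $R(N,t)$. Lemma~\ref{lem:error-eigenvector} gives $\|\frac{d^k}{dt^k}\bfw^\top(t)M^l\| = \bigOh(q^{(d-\xi)(l-k)}l^k)$, which combined with $\|H_{\epsilon_l}\|=\bigOh(q^{(d-1)(L-l)})$, summation over $l$, and division by $N^d$ yields the stated bound $\bigOh(N^{-\xi}\log^k N)$. Finally, $R(N,0)=0$ because at $t=0$ the vector $H_\epsilon(N,0)$ is a state-independent scalar multiple of $\bfones$ (every exponential becomes $1$ and the boundary count does not depend on the starting state), while $\bfw^\top(0)\bfones = \bfe_1^\top(I-P(0))\bfones = 0$ since $M(0)\bfones = q^d\bfones$ places $\bfones$ in the dominant eigenspace and forces $P(0)\bfones=\bfones$. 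The main obstacle is the combinatorial bookkeeping of the boundary term $H_\epsilon$ in dimension $d\ge 2$, together with the verification that, once the factors $\mu_j(t)^{\log_q N}\exp(2\pi ik\log_q N/p)/N^d$ have been pulled out, the remaining expression is genuinely $1$-periodic in $\log_q N$ rather than merely a function of the truncated digit string; the vanishing $H_0\equiv 0$ is precisely what converts a digit shift into a trivial reindexing and makes periodicity hold.
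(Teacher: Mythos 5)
Your strategy mirrors the paper's: set up a recurrence of the form $F(qN+\eps) = M(t)F(N) + H_\eps(N)$ for a recursively defined quantity (the paper uses the matrix $G(N)$ rather than the vector $F(N)=G(N)\bfu$, but this is equivalent), solve it with Lemma~\ref{lem:rec}, and decompose $\bfe_1^\top$ into eigenprojections to isolate the dominant part and an explicit error term $R$. Your bound on $R$ via Lemma~\ref{lem:error-eigenvector} and the argument that $R(N,0)=0$ are correct and coincide with the paper's.

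The gap is the assertion that the interior $\bfn'<N\bfones$ contributes ``precisely $M(t)F(N)$'' and, in particular, that $H_0\equiv 0$. The term $\bfn'=\bfeps'=0$ (present whenever $qN+\eps\neq 0$) violates the decomposition identity: $\T_s(0)=b(s)$, whereas the rule $\T_s(q\bfn'+\bfeps')=\delta_{s,\bfeps'}+\T_{s'(s,\bfeps')}(\bfn')$ would yield $\delta_{s,0}+b(s'(s,0))$, and these disagree unless all output labels on $0$-transitions (and the final outputs) happen to cancel. So the interior sum is $M(t)F(N)+[qN+\eps\neq 0](\bfu-M_0\bfu)$, whence $H_0(N,t)=[N\geq 1](\bfu-M_0\bfu)$, which is nonzero for $t\neq 0$ (it vanishes at $t=0$ because $M_0(0)\bfones=\bfones$). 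This is exactly the term $[C=\emptyset\wedge qN+\eps\neq 0](I-M_0)$ that the paper records in \eqref{eq:H}. Your $1$-periodicity argument explicitly invokes the false vanishing (``the shift appends a $0$ at the least significant position, whose contribution vanishes because $H_0\equiv 0$''), so it does not go through as written. The correct route, and the paper's in \eqref{eq:psi-jk}, is to rewrite the $m$-sum in terms of the digit expansion $q^{\{x\}}=(x_0\centerdot x_1\ldots)_q$, after which the expression manifestly depends on $\{x\}$ alone and is $1$-periodic tautologically, with no need for $H_0$ to vanish. The tail terms with $m>L$ that separate this infinite series from the finite one are then multiples of $\bfw_{lj}^\top(I-M_0)\bfu$, of size $O(N^{-d})$; they vanish at $t=0$ and are dominated by the stated error bound, so they are harmless once absorbed alongside $R(N,t)$.
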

\begin{proof}
  For a transducer \begin{math}\T\end{math}, consider the characteristic function
\begin{equation}\label{eq:expsum2}
F(N)=\Expect(\exp(it\T(n)))=\frac{1}{N^{d}}\sum_{\bfn\in\Omega_{N}}e^{it\T(\bfn)}
\end{equation}
of the discrete random variable $\T(\bfn)$.

Then the summands in~\eqref{eq:expsum2} can be expressed as a matrix
product
\begin{equation*}
e^{it\T(\bfn)}=\bfe_{1}^{\top}\prod_{l=0}^{L}M_{\bfeps_{l}}\bfu
\end{equation*}
where \begin{math}(\bfeps_{L}\ldots\bfeps_{0})_{q}\end{math} is the standard \begin{math}q\end{math}-ary joint digit
representation of \begin{math}\bfn\end{math} with \begin{math}\bfeps_{L}\neq 0\end{math} and the vector \begin{math}\bfu\end{math}
has entries \begin{math}e^{itb(s)}\end{math} where \begin{math}b(s)\end{math}
is the final output of the state \begin{math}s\end{math}. Again, the
vector \begin{math}\bfe_{1}\end{math} is the indicator
vector of the initial state.

Let 
\begin{equation*}
g(\bfn)=\prod_{l=0}^{L}M_{\bfeps_{l}}\end{equation*} 
and
\begin{equation*}
G(N)=\sum_{\bfn\in\Omega_{N}}g(\bfn),\end{equation*}
hence
\begin{equation}
  \label{eq:char-func-matrix-prod}
  F(N)=\frac{1}{N^{d}}\bfe_{1}^{\top}G(N)\bfu.
\end{equation}
The function \begin{math}g(\bfn)\end{math} satisfies the recursion
\begin{equation}\label{eq:rec-g}
  g(q\bfn+\bfeps)=M_{\bfeps}g(\bfn)
\end{equation}
for $\bfeps\in\{0,1,\ldots,q-1\}^{d}$, $\bfn\geq 0$ with $q\bfn+\bfeps\neq 0$.

We define further functions 
\begin{equation}\label{eq:G_C-definition}
G_{C}(N)=\sum_{\substack{0\leq n_{i}<N\\i\not\in
    C}}\sum_{\substack{n_{i}=N\\i\in C}}g(\bfn)\end{equation}
where the coordinates \begin{math}n_{1}\end{math},
\dots, \begin{math}n_{d}\end{math} of \begin{math}\bfn\end{math} with indices
in the set \begin{math}C\subseteq\{1,\ldots,d\}\end{math} are fixed
to \begin{math}N\end{math}. This yields \begin{math}G(N)=G_{\emptyset}(N)\end{math}. 
Furthermore, we define the matrices
\begin{equation}\label{eq:matrices-M}
   M_{C,D}^{\eps}=\sum_{\substack{\beta_{i}=0\\ i\not\in C\cup
     D}}^{q-1}\sum_{\substack{\beta_{i}=0\\i\in
       D}}^{\eps-1}\sum_{\substack{\beta_{i}=\eps\\i\in C}}M_{\bfbeta}
 \end{equation}
for disjoint sets $C$, $D\subseteq\{1,\ldots,d\}$ and
$\eps\in\{0,1,\ldots,q-1\}$. In this definition, we restrict the $i$-th
coordinate $\beta_{i}$ of $\bfbeta$ to be $\eps$ or less than $\eps$ if $i\in C$
or $i\in D$, respectively. Otherwise, the $i$-th coordinate can be arbitrary. 
Then, $M=M_{\emptyset,\emptyset}^{\eps}$ holds independently of $\eps$.

 Then, \eqref{eq:rec-g} yields the following recursions
for \begin{math}G_{C}(N)\end{math}, \begin{math}\eps=0\end{math},
\dots, \begin{math}q-1\end{math}, \begin{math}N\geq0\end{math} and \begin{math}C\neq\{1,\ldots,d\}\end{math}:
\begin{equation}\label{eq:recursion-general}
\begin{aligned}
G_{C}(qN+\eps)&=\sum_{\substack{\beta_{i}=0\\i\not\in
      C}}^{q-1}\sum_{\substack{\beta_{i}=\eps\\i\in
      C}}\sum_{\substack{0\leq qm_{i}+\beta_{i}<qN+\eps\\i\not\in
      C}}\sum_{\substack{qm_{i}+\beta_{i}=qN+\eps\\i\in C}}g(q\boldsymbol{m}+\bfbeta)\\
&=[C=\emptyset \wedge qN+\eps\neq 0](I-M_{0})\\
&\quad+\sum_{\substack{\beta_{i}=0\\i\not\in
      C}}^{q-1}\sum_{\substack{\beta_{i}=\eps\\i\in
      C}}M_{\bfbeta}\sum_{\substack{0\leq m_{i}<N+\frac{\eps-\beta_{i}}q\\i\not\in
      C}}\sum_{\substack{m_{i}=N\\i\in C}}g(\boldsymbol{m})\\
&=[C=\emptyset \wedge qN+\eps\neq 0](I-M_{0})+\sum_{D\subseteq C^{c}}M_{C,D}^{\eps}G_{C\cup D}(N).
\end{aligned}
\end{equation}

This recursion for \begin{math}G_{C}\end{math} only depends on \begin{math}G_{C'}\end{math} for \begin{math}C'\supsetneq
C\end{math}. As
\begin{equation*}
G_{\{1,\ldots,d\}}(N)=g(N\bfones),\end{equation*}
we can recursively determine \begin{math}G_{C}\end{math} using
Lemma~\ref{lem:rec}. In particular, for \begin{math}G(N)\end{math}, this yields the recursion formula 
\begin{equation}\label{eq:recursion-G}
G(qN+\eps)=MG(N)+H_{\eps}(N)\end{equation}
for \begin{math}N\geq 0\end{math}, \begin{math}\eps\in\{0,\ldots
  q-1\}\end{math} where \begin{math}H_{\eps}\end{math} are known functions with
\begin{equation}\label{eq:H}H_{\eps}(N)=[qN+\eps\neq 0](I-M_{0})+\sum_{\emptyset\neq D\subseteq \{1,\ldots,d\}}M_{\emptyset,D}^{\eps}G_{D}(N).\end{equation}
		
Thus by Lemma~\ref{lem:rec}, we get
\begin{equation}\label{eqn:recursion}G((\eps_{L}\ldots\eps_{0})_{q})=\sum_{m=0}^LM^mH_{\eps_{m}}\left((\eps_{L}\ldots\eps_{m+1})_{q}\right).\end{equation}

By construction, \begin{math}\|M_{\bfeps}\|_{\infty}=1\end{math} for every \begin{math}\bfeps\in\{0,\ldots,q-1\}^{d}\end{math}. We conclude that
\begin{math}\|M_{C,D}^{\eps}\|_{\infty}\le q^{d-|C|-|D|}\eps^{|D|}\end{math}.
By the definition of \begin{math}G_{C}(N)\end{math}, the growth rates of the
functions \begin{math}G_{C}(N)\end{math}
and \begin{math}H_{\eps}(N)\end{math} are \begin{math}\|G_{C}(N)\|_{\infty}=\bigOh(N^{d-|C|})\end{math} and
\begin{math}\|H_{\eps}(N)\|_{\infty}=\bigOh(N^{d-1})\end{math}, respectively. For $k\geq 0$, the
$k$-th derivative of $H_{\eps}(N)$ at $t=0$ can be bounded by
$\bigOh(c_{k}^{(3)}N^{d-1}\log^{k} N)$ for a constant $c_{k}^{(3)}$. 

We define
\begin{equation*}
  R(N,t)=\frac{1}{N^{d}}\sum_{m=0}^{L}\bfw^{\top}M^{m}H_{\eps_{m}}((\eps_{L}\ldots\eps_{m+1})_{q})\bfu,
\end{equation*}
which constitutes an explicit expression for the error term contributed by the non-dominant eigenvalues. By Lemma~\ref{lem:error-eigenvector}, its derivatives satisfy \[\frac{d^{k}}{d t^{k}}R(N,t)=\bigOh(c_{k}^{(2)}N^{-\xi}\log^{k}N)\]
for $k\geq 0$. Because $\bfu(0)=\bfones$ and left and right eigenvectors corresponding to
different eigenvalues annihilate each other, we have $R(N,0)=0$.

By~\eqref{eq:char-func-matrix-prod},~\eqref{eqn:recursion} and $\bfe_{1}^{\top}=\sum_{l\in\calP}\sum_{j=1}^{c}\bfw_{lj}^{\top}+\bfw^{\top}$,
\begin{align*}
    F(N)&=\frac{1}{N^{d}}\sum_{l\in\calP}\sum_{j=1}^{c}\mu_{j}^{L}\exp\Big(\frac{2\pi
      ilL}{p}\Big)\\&\quad\cdot\sum_{m=0}^{L}\mu_{j}^{m-L}\exp\Big(\frac{2\pi
      il(m-L)}{p}\Big)\bfw_{lj}^{\top}H_{\eps_{m}}((\eps_{L}\ldots\eps_{m+1})_{q})\bfu\\&\quad+R(N,t)\\
&=\frac{1}{N^{d}}\sum_{l\in\calP}\sum_{j=1}^{c}\mu_{j}^{\log_{q}N}\exp\Big(\frac{2\pi
      il\log_{q}N}{p}\Big)\Psi_{lj}(\log_{q}N,t)+R(N,t)
\end{align*}
with 
\begin{multline}\label{eq:psi-jk}\Psi_{lj}(x,t)=\mu_{j}(t)^{-\{x\}}\exp\Big(-\frac{2\pi
    i l\{x\}}{p}\Big)\\\cdot\sum_{m=0}^{\infty}\mu_{j}(t)^{-m}\exp\Big(-\frac{2\pi ilm}{p}\Big)\bfw_{lj}^{\top}H_{x_{m}}((x_{0}\ldots
  x_{m-1})_{q})\bfu\end{multline}
and $q^{\{x\}}=(x_{0}\centerdot x_{1}\ldots)_{q}$, choosing the representation
ending on $0^{\omega}$ in the case of ambiguity.

The functions \begin{math}\Psi_{lj}(x,t)\end{math} are periodic in \begin{math}x\end{math} with period \begin{math}1\end{math} and well
defined for all \begin{math}x\in\reals\end{math} since they are dominated by
geometric series. Furthermore, they are arbitrarily often
differentiable in \begin{math}t\end{math}.
\end{proof}

\subsection{Moments}\label{sec:moments}
In this section we give the moments of the output sum $\T(\bfn)$.

\begin{lemma}\label{lem:exp-var}
  The expected value and the variance of $\T(n)$ are as stated in
  Theorem~\ref{thm:asydist} with constants given in~\eqref{eq:const-of-thm} and periodic functions given
  in Lemma~\ref{lemma:psi_1-explicit}
  and~\eqref{eq:psi2}.
\end{lemma}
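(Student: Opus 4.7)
The plan is to obtain both moments by differentiating the asymptotic formula for the characteristic function from Lemma~\ref{lem:char-func} at $t=0$, combined with the identity $\mathbb{E}(X^k) = (-i)^k \frac{d^k}{dt^k}\mathbb{E}(\exp(itX))\big|_{t=0}$. The bounds on $\frac{d^k}{dt^k}R(N,t)$ stated in Lemma~\ref{lem:char-func} take care of the error terms $\bigOh(N^{-\xi}\log N)$ and $\bigOh(N^{-\xi}\log^2 N)$ uniformly, so the real work lies in expanding the leading sum.

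First, for the expected value I would write
\begin{equation*}
\Expect(\T(\bfn))=-\frac{i}{N^d}\sum_{l\in\calP}\sum_{j=1}^{c}\frac{\partial}{\partial t}\Bigl[\mu_{j}(t)^{\log_{q}N}\exp\Bigl(\tfrac{2\pi i l\log_{q}N}{p}\Bigr)\Psi_{lj}(\log_{q}N,t)\Bigr]_{t=0}+\bigOh(N^{-\xi}\log N).
\end{equation*}
Since $\mu_j(0)=q^d$ and thus $\mu_j(0)^{\log_q N}=N^d$, the product rule splits this into two kinds of contributions: a \emph{logarithmic piece} $-i q^{-d}\mu_j'(0)\log_q N\cdot\Psi_{lj}(\log_q N,0)$, and a \emph{pointwise piece} $-i\,\Psi_{lj,t}(\log_q N,0)$, each multiplied by the periodic factor $e^{2\pi i l\log_q N/p}$. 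Using $a_j=-iq^{-d}\mu_j'(0)$ together with the identity $F(N,0)=1$ (which forces $\sum_{l,j}e^{2\pi i l x/p}\Psi_{lj}(x,0)\equiv 1$) and, more delicately, summing $\Psi_{0j}(x,0)=\lambda_j$ for $l=0$ (verified via the eigenvector interpretation from Section~\ref{sec:prel-from-lin-alg}), the logarithmic piece for $l=0$ becomes $e_\T\log_q N$. All other contributions are collected into the $p$-periodic function $\Psi_1(\log_q N)$ as defined in Lemma~\ref{lemma:psi_1-explicit}, which is continuous because the series defining $\Psi_{lj}$ converge absolutely.

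For the variance, I would similarly compute
\begin{equation*}
\Expect(\T(\bfn)^2)=-\frac{1}{N^d}\sum_{l\in\calP}\sum_{j=1}^{c}\frac{\partial^2}{\partial t^2}\Bigl[\mu_{j}(t)^{\log_{q}N}e^{2\pi i l\log_{q}N/p}\Psi_{lj}(\log_{q}N,t)\Bigr]_{t=0}+\bigOh(N^{-\xi}\log^2 N),
\end{equation*}
noting that the second $t$-derivative of $\mu_j(t)^{\log_q N}$ produces a $\log_q^2 N$ term with coefficient $q^{-2d}\mu_j'(0)^2\log_q^2 N+q^{-d}\mu_j''(0)\log_q N$, plus mixed terms of order $\log_q N$ arising from differentiating $\Psi_{lj}$ once and $\mu_j^{\log_q N}$ once. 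Using the identity $\text{Var}=\Expect(\T^2)-\Expect(\T)^2$, the $\log_q^2 N$ contribution equals
$\sum_j\lambda_j a_j^2\log_q^2 N - \bigl(\sum_j\lambda_j a_j\bigr)^2\log_q^2 N$, which by the Cauchy--Schwarz/variance-of-means identity vanishes iff all $a_j$ are equal; otherwise it is strictly positive, proving the $\Theta(\log^2 N)$ dichotomy. When all $a_j$ coincide, the remaining $\log_q N$ coefficient simplifies to $\sum_j\lambda_j b_j=v_\T$ using $b_j=q^{-2d}(\mu_j'(0)^2-q^d\mu_j''(0))$, and the cross term $-\Psi_1^2(\log_q N)$ emerges from $-\Expect(\T)^2$ after isolating the $v_\T\log_q N$ main term; the remaining periodic, continuous contributions are gathered into $\Psi_2(\log_q N)$, yielding \eqref{eq:psi2}.

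The main obstacle is the second point: keeping track of all the cross terms in $\frac{\partial^2}{\partial t^2}$ and showing that, when all $a_j$ are equal, exactly those terms which are of order $\log_q^2 N$ cancel after subtracting $\Expect(\T(\bfn))^2$, leaving a clean $-\Psi_1^2(\log_q N)$ fluctuation together with a genuinely periodic, continuous remainder $\Psi_2$. This requires carefully expanding the derivatives of $\Psi_{lj}(x,t)$ at $t=0$ using \eqref{eq:psi-jk} and the relations \eqref{eq:bfw_bfdelta} between $\bfw_l^\top$, $\bfw_l'^\top$ and $\bfdelta$, and then collecting like terms indexed by $l\in\calP$ and $j\in\{1,\dots,c\}$. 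Continuity of $\Psi_2$ follows, as for $\Psi_1$, from the absolute and uniform convergence of the defining series, since the summands are bounded by geometric sequences in $m$.
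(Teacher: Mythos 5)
Your proposal is correct and follows essentially the same route as the paper: differentiate the characteristic function of Lemma~\ref{lem:char-func} once and twice at $t=0$, use $a_j=-iq^{-d}\mu_j'(0)$ and $\Psi_{lj}(x,0)=\lambda_j[l=0]$ to pull out the main terms, and invoke Jensen (your variance-of-means observation) for the $\Theta(\log^2 N)$ dichotomy. The one step you assert rather than derive is $\Psi_{lj}(x,0)=\lambda_j[l=0]$; the paper obtains this via the explicit identity $H_{\eps}(N)\bfones=((qN+\eps)^d-(qN)^d)\bfones$, the relation $\bfw_{lj}^\top\bfones=[l=0]\lambda_j$, and a telescoping sum $D(q^d)=q^{d\{x\}}$, though your alternative observation $F(N,0)\equiv 1$ combined with $\sum_j\lambda_j=1$ would also close that computation.
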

\begin{proof}
  The derivative of \begin{math}\Expect(\exp(it\T(\bfn)))\end{math} with
  respect to \begin{math}t\end{math} at \begin{math}t=0\end{math} gives the expected value
of the sum of the output of the transducer
\begin{multline*}
\Expect(\T(n))=\frac1{N^{d}}\sum_{\bfn\in\Omega_{N}}\T(n)=\Psi_{0}(\log_{q}N)\log_{q}N+\Psi_{1}(\log_{q}N)\\+\bigOh(N^{-\xi}\log
N)\end{multline*}
with \begin{math}p\end{math}-periodic functions 
\begin{equation}\label{eq:psi1-general}
  \begin{aligned}
    \Psi_{0}(x)&=\sum_{l\in\calP}\sum_{j=1}^{c}a_{j}e^{\frac{2\pi
        ilx}{p}}\Psi_{lj}(x,0),\\
    \Psi_{1}(x)&=-i\sum_{l\in\calP}\sum_{j=1}^{c}e^{\frac{2\pi
        ilx}{p}}\Psi_{lj}'(x,0)
  \end{aligned}
\end{equation}
and constants $a_{j}$ defined in~\eqref{eq:const-of-thm}. Here, $\Psi_{lj}'$
denotes the derivative with respect to $t$.

We now compute
\begin{math}\Psi_0(x)\end{math} for some \begin{math}x\end{math}
with \begin{math}q^{\{x\}}=(x_0\centerdot x_1\ldots)_q\end{math}. To compute
$H_\eps(N)$, we use \eqref{eq:recursion-G} and the definition of $G(N)$ to obtain
\begin{equation}\label{eq:H_eps-explicit}
  H_\eps(N)\bfones = ((qN+\eps)^d-(qN)^d)\bfones
\end{equation}
for $t=0$, because $\bfones$ is a right eigenvector of $M_{\bfeps}$ for every $\bfeps$.
Together with \eqref{eq:psi-jk}, this results in
\begin{equation*}
  \Psi_{lj}(x,0)=q^{-d\{x\}}\exp\Big(-\frac{2\pi
    i l\{x\}}{p}\Big)\bfw_{lj}^{\top}\bfones D\Big(q^{d}e^{\frac{2\pi il}{p}}\Big)
\end{equation*}
with 
\begin{equation*}
  D(z)=\sum_{m=0}^\infty z^{-m}((x_0\ldots x_m)_q^d-(x_0\ldots x_{m-1}0)_q^d).
\end{equation*}
By \eqref{eq:lambda-or-0}, we have $\Psi_{lj}(x, 0)=0$ for 
\begin{math}l\neq 0\end{math}.

To compute $D(q^d)$, observe that
\begin{align*}
  D(q^d)&=\sum_{m=0}^\infty \bigl((x_0\centerdot x_1\ldots x_m)_q^d -
  (x_0\centerdot x_1\ldots x_{m-1})_q^d\bigr)\\
  &=\lim_{m\to\infty} (x_0\centerdot x_1\ldots x_m)_q^d=q^{d\{x\}}
\end{align*}
because $D(q^d)$ is a telescoping sum.

We conclude that
\begin{equation}\label{eq:psi-lambda}
  \Psi_{lj}(x,0)=\lambda_{j}[l=0]
\end{equation}
and therefore
\begin{equation*}
\Psi_{0}(x)=\sum_{j=1}^{c}a_{j}\lambda_{j}=e_{\T}\end{equation*}
by \eqref{eq:const-of-thm}. This completes the proof of the expectation as
given in \eqref{eq:expected}.

Using Lemma~\ref{lem:char-func} and \eqref{eq:psi-lambda}, the second derivative of \begin{math}\Expect(\exp(it\T(n)))\end{math} gives 
\begin{align*}
&\frac1{N^{d}}\sum_{\bfn\in\Omega_{N}}\T(n)^{2}=\\
&\qquad\quad\log_{q}^{2}N\sum_{j=1}^{c}a_{j}^{2}\lambda_{j}+v_{\T}\log_{q}N\\
&\qquad\quad-2i\log_{q}N\sum_{l\in\mathcal
P}\sum_{j=1}^{c}a_{j}\exp\Big(\frac{2\pi
il\log_{q}N}{p}\Big)\Psi_{lj}'(\log_{q}N,0)\\
&\qquad\quad+\Psi_{2}(\log_{q}N)+\bigOh(N^{-\xi}\log^{2}
N)\end{align*}
with $v_{\T}$ given in~\eqref{eq:const-of-thm} and
\begin{equation}\label{eq:psi2}
  \Psi_{2}(x)=-\sum_{l\in\calP}\sum_{j=1}^{c}e^{\frac{2\pi ilx}{p}}\Psi_{lj}''(x,0).
\end{equation}
Here, $\Psi_{lj}''$ denotes the second derivative with respect to $t$. Thus, by \eqref{eq:expected}, the variance is
\begin{equation}\label{eq:variance-general}
\begin{aligned}
  \Var(\T(n))&=\frac1{N^{d}}\sum_{\bfn\in\Omega_{N}}\T(n)^{2}-\left(\frac1{N^{d}}\sum_{\bfn\in\Omega_{N}}\T(n)\right)^{2}\\
&=\Big(\sum_{j=1}^{c}a_{j}^{2}\lambda_{j}-e_{\T}^{2}\Big)\log_{q}^{2}N\\
&\quad+\Big(v_{\T}-2i\sum_{l\in\mathcal
P}\sum_{j=1}^{c}a_{j}\exp\Big(\frac{2\pi
il\log_{q}N}{p}\Big)\Psi_{lj}'(\log_{q}N,0)\\
&\quad\qquad-2e_{\T}\Psi_{1}(\log_{q}N)\Big)\log_{q}N\\
&\quad+\Psi_{2}(\log_{q}N)-\Psi_{1}^{2}(\log_{q}N)+\bigOh(N^{-\xi}\log^{2} N).
\end{aligned}
\end{equation}

By Jensen's inequality, the coefficient of $\log_{q}^{2}N$ is zero if and only
if all $a_{j}$ are equal. If all $a_{j}$ are equal, then the coefficient of
$\log_{q}N$ in \eqref{eq:variance-general} simplifies by~\eqref{eq:psi1-general}, too, and we obtain~\eqref{eq:var-good}.
\end{proof}

For the computation of the Fourier coefficients and the proof of the H\"older
condition, we need an explicit expression for $\Psi_{1}$.

In analogy to the definition of $G_C$ in \eqref{eq:G_C-definition}, define
\begin{equation}\label{eq:bfB-def}
  \bfB_{C}(N)=\sum_{\substack{0\leq n_{i}<N\\i\not\in
      C}}\sum_{\substack{n_{i}=N\\i\in C}}\bfb(\bfn)
\end{equation}
for $C\subseteq \{1,\ldots, d\}$.

\begin{lemma}\label{lemma:psi_1-explicit}
  For \begin{math}q^{\{x\}}=(x_{0}\centerdot x_{1}\ldots)_{q}\end{math}, the
  fluctuation $\Psi_1(x)$ can be expressed as
  \begin{equation}\label{eq:psi-1-explicit}
\Psi_{1}(x)=-e_{\T}\{x\}-q^{-d\{x\}}\sum_{l\in\calP}\sum_{m=0}^{\infty}q^{-dm}e^{\frac{2\pi
  i l}{p}(\floor{x}-m)}f_{l}((x_{0}\ldots
x_{m})_{q})
\end{equation}
  with
  \begin{equation}\label{eq:coeff-f}
  \begin{aligned}
    f_l(r)&=[l=0]e_{\T}\big(\lfloor\log_{q}
    r\rfloor(r^{d}-(q\floor{rq^{-1}})^{d})+(q\floor{rq^{-1}})^{d}\big)\\&\quad+i\bfw_{l}'^{\top}\bfones\Big(r^{d}-\exp\Big(\frac{2\pi i l}{p}\Big)(q\floor{rq^{-1}})^{d}\Big)\\&\quad-\bfw_{l}^{\top}\bfB_{\emptyset}(r)+q^{d}\exp\Big(\frac{2\pi
      il}{p}\Big)\bfw_{l}^{\top}\bfB_{\emptyset}(\floor{rq^{-1}}).
  \end{aligned}
  \end{equation}
  The estimate $f_{l}(r)=\bigOh(r^{d-1}\log r)$ holds.
\end{lemma}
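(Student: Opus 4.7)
The plan is to obtain the explicit form of $\Psi_1(x)$ by directly differentiating the expression \eqref{eq:psi-jk} for $\Psi_{lj}(x,t)$ with respect to $t$ at $t=0$ and then summing over $l,j$ according to \eqref{eq:psi1-general}. Applying the product/chain rule to
$$\Psi_{lj}(x,t)=\mu_j(t)^{-\{x\}}e^{-2\pi il\{x\}/p}\sum_{m\ge 0}\mu_j(t)^{-m}e^{-2\pi ilm/p}\bfw_{lj}^\top(t)H_{x_m}((x_0\ldots x_{m-1})_q,t)\bfu(t)$$
produces four kinds of terms: the derivative of $\mu_j(t)^{-\{x\}}$, that of $\mu_j(t)^{-m}$, that of $\bfw_{lj}^\top(t)$, and that of $H_{x_m}\bfu$. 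Substituting $\mu_j(0)=q^d$, $\mu_j'(0)=ia_jq^d$, $\bfw_{lj}^\top\bfones=[l=0]\lambda_j$, and using \eqref{eq:H_eps-explicit} that $H_\eps(N,0)\bfones=((qN+\eps)^d-(qN)^d)\bfones$, the first two types of terms contribute the $-e_\T\{x\}$ summand and the $[l=0]e_\T\lfloor\log_q r\rfloor(r^d-(q\lfloor r/q\rfloor)^d)$ piece of $f_l$ (after noting $m=\lfloor\log_q r_m\rfloor$, since $q^m\le r_m<q^{m+1}$).

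The crucial auxiliary computation is $(H_\eps(N)\bfu)'(0)$, obtained by differentiating the recursion $G(qN+\eps,t)\bfu(t)=M(t)G(N,t)\bfu(t)+H_\eps(N,t)\bfu(t)$ at $t=0$. Since each entry of $g(\bfn)\bfu$ equals $1$ at $t=0$ and has derivative $ib_s(\bfn)$, one has $(G(N)\bfu)'(0)=i\bfB_\emptyset(N)$; combined with $M'(0)\bfones=i\bfdelta$ from \eqref{eq:bfdelta-is-derivative}, this gives
$$(H_\eps(N)\bfu)'(0)=i\bfB_\emptyset(qN+\eps)-iN^d\bfdelta-iM\bfB_\emptyset(N).$$

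The third paragraph of work is the assembly. After multiplying by $-ie^{2\pi ilx/p}$ and summing over $j$ (using $\sum_j\bfw_{lj}^\top=\bfw_l^\top$ from \eqref{eq:w-l} and analogously $\sum_j\bfw_{lj}'^\top=\bfw_l'^\top$), the left-eigenvector relation $\bfw_l^\top M=q^de^{2\pi il/p}\bfw_l^\top$ turns the $M\bfB_\emptyset(N_m)$ contribution into the $q^de^{2\pi il/p}\bfw_l^\top\bfB_\emptyset(\lfloor r/q\rfloor)$ piece of $f_l$, while the $N_m^d\bfdelta$ contribution is resolved via \eqref{eq:bfw_bfdelta}: the identity $\bfw_l^\top\bfdelta=[l=0]e_\T q^d-q^d(e^{2\pi il/p}-1)i\bfw_l'^\top\bfones$ accounts simultaneously for the extra $[l=0]e_\T(q\lfloor r/q\rfloor)^d$ term and converts $i\bfw_l'^\top\bfones(r^d-(q\lfloor r/q\rfloor)^d)$ into $i\bfw_l'^\top\bfones(r^d-e^{2\pi il/p}(q\lfloor r/q\rfloor)^d)$. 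Using $e^{2\pi ilx/p}e^{-2\pi il\{x\}/p}=e^{2\pi il\lfloor x\rfloor/p}$ and $\sum_j a_j\lambda_j=e_\T$ then matches the target \eqref{eq:psi-1-explicit}--\eqref{eq:coeff-f} exactly.

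The main obstacle is the estimate $f_l(r)=\bigOh(r^{d-1}\log r)$. The pieces $e_\T\lfloor\log_q r\rfloor(r^d-(q\lfloor r/q\rfloor)^d)$ are manifestly $\bigOh(r^{d-1}\log r)$ by the mean value theorem, but the remaining combination $i\bfw_l'^\top\bfones(r^d-e^{2\pi il/p}(q\lfloor r/q\rfloor)^d)-\bfw_l^\top\bfB_\emptyset(r)+q^de^{2\pi il/p}\bfw_l^\top\bfB_\emptyset(\lfloor r/q\rfloor)$ (plus $[l=0]e_\T(q\lfloor r/q\rfloor)^d$) has individual summands of size $\Theta(r^d)$ and only survives at order $r^{d-1}\log r$ after cancellation. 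To establish this, I would run the analysis of Sections~\ref{sec:char-funct}--\ref{sec:moments} with $\bfe_1^\top$ replaced by $\bfw_l^\top$: by construction, $\bfw_l^\top$ picks up only the eigenvalues $q^de^{2\pi il/p}$, so the corresponding expansion yields $\bfw_l^\top\bfB_\emptyset(r)=[l=0]e_\T r^d\log_q r+(\text{constant in $l$})\cdot r^d+\bigOh(r^{d-1}\log r)$, with the constant forced, via the same $\bfw_l^\top M$ eigenvalue identity, to equal exactly what is needed to cancel the remaining $r^d$ contributions. This calibration of the $r^d$ coefficient of $\bfw_l^\top\bfB_\emptyset(r)$ is the delicate bookkeeping step that completes the proof.
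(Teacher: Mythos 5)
Your derivation of the explicit formula \eqref{eq:coeff-f} is correct and essentially reproduces the paper's argument: differentiate \eqref{eq:psi-jk} at $t=0$, extract the $-e_\T\{x\}$ and $[l=0]e_\T\lfloor\log_q r\rfloor(\ldots)$ pieces via \eqref{eq:psi-lambda} and \eqref{eq:H_eps-explicit}, compute $(H_\eps(N)\bfu)'(0)=i\bfB_\emptyset(qN+\eps)-iN^d\bfdelta-iM\bfB_\emptyset(N)$ from the recursion \eqref{eq:recursion-G}, then apply the left-eigenvector relation and \eqref{eq:bfw_bfdelta}. The assembly in your third paragraph checks out term by term.

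The growth estimate, however, is where you overcomplicate and in fact go astray. You did not notice that the paper has \emph{two} expressions for $H_\eps(N)$: the recursion-based form $H_\eps(N)=G(qN+\eps)-MG(N)$ that you used to derive the formula, and the explicit form \eqref{eq:H}, namely $H_\eps(N)=[qN+\eps\neq 0](I-M_0)+\sum_{\emptyset\neq D}M_{\emptyset,D}^{\eps}G_D(N)$. Since the intermediate form of $f_l$ is precisely $[l=0]e_\T\lfloor\log_q r\rfloor(r^d-(q\lfloor rq^{-1}\rfloor)^d)+i\frac{d}{dt}\bfw_l^\top(t)H_{r\bmod q}(\lfloor rq^{-1}\rfloor)\bfu(t)\big|_{t=0}$, and \eqref{eq:H} expresses $H_\eps(N)$ as a sum of $G_D(N)$ with $D\neq\emptyset$, each of which sums over only $N^{d-\lvert D\rvert}$ integer vectors, the trivial bound $\|\bfb(\bfn)\|=\bigOh(\log\|\bfn\|)$ together with \eqref{eq:bfB_C-is-derivative} gives $\|H_\eps(N)\|$ and its $t$-derivative as $\bigOh(N^{d-1}\log N)$ directly, with no cancellation to track.

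Your proposed route---rerunning the moment analysis with $\bfw_l^\top$ in place of $\bfe_1^\top$ and calibrating an alleged constant coefficient of $r^d$---contains a genuine gap: the $r^d$ term in $\bfw_l^\top\bfB_\emptyset(r)$ is not a constant times $r^d$ but $r^d$ times a bounded $p$-periodic function of $\log_q r$ (this is exactly the $\Psi_1$-type fluctuation). The required cancellation in $-\bfw_l^\top\bfB_\emptyset(r)+q^de^{2\pi il/p}\bfw_l^\top\bfB_\emptyset(\lfloor rq^{-1}\rfloor)$ is therefore a cancellation of fluctuating functions, not of constants, and its verification is not ``the same eigenvalue identity'' but a considerably more intricate argument that you do not carry out. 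The paper's route via \eqref{eq:H} avoids all of this.
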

\begin{proof}
  From \eqref{eq:psi1-general}, \eqref{eq:psi-jk}, \eqref{eq:const-of-thm},
  \eqref{eq:psi-lambda} and \eqref{eq:lambda-or-0} and the
  absolute convergence of \begin{math}\Psi_{lj}\end{math}, we obtain
  \eqref{eq:psi-1-explicit} with
  \begin{multline*}
f_{l}(r)=[l=0]e_{\T}\lfloor\log_{q} r\rfloor(r^{d}-(q\floor{rq^{-1}})^{d})\\+i\left.\frac{d}{d
  t}\bfw_{l}^{\top}(t)H_{r\bmod q}(\floor{rq^{-1}})\bfu(t)\right\vert_{t=0}.
\end{multline*}

  From the combinatorial interpretation of $\bfb(\bfn)$ and $g(\bfn)\bfu(t)$,
  we obtain
  \begin{equation}\label{eq:bfb-is-derivative}
    i\bfb(\bfn)=\left.\frac{d}{dt}g(\bfn)\bfu(t)\right|_{t=0},
  \end{equation}
  in analogy to \eqref{eq:bfdelta-is-derivative}. As the range of summation of
  $G_C$ and $\bfB_C$ coincides, we immediately get
  \begin{equation}\label{eq:bfB_C-is-derivative}
    i \bfB_C(N) = \left.\frac{d}{dt}G_C(N)\bfu(t)\right|_{t=0}.
  \end{equation}

  By \eqref{eq:H_eps-explicit} and by differentiating $H_\eps(N)\bfu(t)$ using
  \eqref{eq:recursion-G}, \eqref{eq:bfB_C-is-derivative} and
  \eqref{eq:bfdelta-is-derivative},
\begin{multline*}
f_{l}(r)=[l=0]e_{\T}\lfloor\log_{q} r\rfloor(r^{d}-(q\floor{rq^{-1}})^{d})\\
+i\bfw_l'^\top\bfones \bigl(r^d-(q\lfloor rq^{-1}\rfloor)^d \bigr)\\
- \bfw_l^\top \bigl(\bfB_\emptyset(r)-M\bfB_\emptyset(\lfloor rq^{-1}\rfloor) -
\lfloor rq^{-1}\rfloor^d \bfdelta\bigr).
\end{multline*}
  The fact that $\bfw_l^{\top}$ is a left eigenvector of $M$ and
  \eqref{eq:bfw_bfdelta} establish \eqref{eq:coeff-f}.

  For the growth estimate of $f_l(r)$, we use the explicit definition of
  $H_{\eps}$ in \eqref{eq:H},
  \eqref{eq:bfB_C-is-derivative} and the trivial estimate $\|\bfb(\bfn)\|=\bigOh(\log
  \|\bfn\|)$.
\end{proof}

To formulate $\T(\bfn)$ as a $q$-regular sequence, we first define output vectors.
The \begin{math}s\end{math}-th entry of the vector
\begin{math}\bfdelta_{\bfeps}\end{math} is the output label of the transition from state \begin{math}s\end{math}
 with input label \begin{math}\bfeps\end{math}. 
By~\eqref{eq:rec-g}, \eqref{eq:bfb-is-derivative}, and 
\begin{equation}
  \label{eq:delta-eps-is-derivative}
  \frac{d}{dt}M_{\bfeps}\bfones\Big\lvert_{t=0}=i\bfdelta_{\bfeps},
\end{equation}
we have
\begin{equation}
  \label{eq:recursion-b}
  \bfb(q\bfn+\bfeps)=M_{\bfeps}\bfb(\bfn)+\bfdelta_{\bfeps}.
\end{equation}
\begin{remark}
  \label{rem:q-reg}
We can use the matrices
\begin{equation*}
  V_{\bfeps}=
  \begin{pmatrix}
    M_{\bfeps}&\bfdelta_{\bfeps}&[\bfeps=0]I\\
    0&1&0\\
    0&0&[\bfeps=0]I
  \end{pmatrix}
\end{equation*}
and $\bfv(\bfn)=(\bfb(\bfn),1,[\bfn=0](\bfb(0)-M_{0}\bfb(0)-\bfdelta_{0}))^{\top}$ in the definition of a $q$-regular sequence~\eqref{eq:q-reg} to
realize that the output sum of a transducer is $q$-regular. If $d>1$, then
this is a multidimensional $q$-regular sequence (cf.~\cite{Allouche-Shallit:2003:autom}).
\end{remark}

\subsection{H\"older Continuity}\label{sec:holder-continuity}
In this section, we prove the continuity of the fluctuations $\Psi_{1}$ and
$\Psi_{2}$ as well as the  H\"older continuity of $\Psi_1$. This will be used
to establish the convergence of the Fourier series.

\begin{lemma}\label{lem:cont-psi}
The functions $\Psi_{1}(x)$ and, if all $a_{j}$ are equal, $\Psi_{2}(x)$ are continuous for
$x\in\mathbb R$.
\end{lemma}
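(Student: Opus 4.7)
The approach is to work from the closed-form expression for $\Psi_{1}$ given in \eqref{eq:psi-1-explicit} of Lemma~\ref{lemma:psi_1-explicit}, together with an analogous series for $\Psi_{2}$. The prefactor $-e_{\T}\{x\}$ in \eqref{eq:psi-1-explicit} is continuous modulo the period (the jumps at integers being compensated by $p$-periodicity), and the factor $q^{-d\{x\}}$ is continuous. The summands of the double series are bounded in absolute value by $q^{-dm}|f_{l}((x_{0}\ldots x_{m})_{q})| = \bigOh(m q^{-m})$, using the estimate $f_{l}(r)=\bigOh(r^{d-1}\log r)$ from Lemma~\ref{lemma:psi_1-explicit} together with the trivial bound $(x_{0}\ldots x_{m})_{q}\le q^{m+1}$. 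Hence the series converges uniformly on compact subsets of $\reals$, and it suffices to establish continuity of each individual summand.

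Each summand depends on $x$ only through $\lfloor x\rfloor\bmod p$, $\{x\}$, and the first $m+1$ $q$-ary digits of $q^{\{x\}}$. By the convention fixing the $0^{\omega}$-ending representation, these data are locally constant in $x$ outside the countable set of ``critical'' points $x^{*}$ at which $q^{\{x^{*}\}}$ is a $q$-adic rational. At such $x^{*}$, write $q^{\{x^{*}\}}=(x_{0}\centerdot x_{1}\ldots x_{M}0^{\omega})_{q}=(x_{0}\centerdot x_{1}\ldots(x_{M}-1)(q-1)^{\omega})_{q}$ with $x_{M}>0$; the right one-sided limit retains the digits $x_{0},\ldots,x_{M},0,0,\ldots$, while the left one-sided limit uses $x_{0},\ldots,x_{M}-1,q-1,q-1,\ldots$. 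For $m<M$ both sides coincide. For $m\ge M$, with $A=(x_{0}\ldots x_{M})_{q}$, the two sides evaluate $f_{l}$ at $Aq^{m-M}$ and $Aq^{m-M}-1$ respectively. The $\bfB_{\emptyset}$-pieces in \eqref{eq:coeff-f} telescope inside $\sum_{m\ge M}q^{-dm}e^{-2\pi ilm/p}f_{l}(\cdot)$: shifting the summation index in the $\bfB_{\emptyset}(\lfloor rq^{-1}\rfloor)$-term and using the eigenvector identity $\bfw_{l}^{\top}M=q^{d}e^{2\pi il/p}\bfw_{l}^{\top}$ rewrites each tail sum as a boundary contribution at $m=M$ depending only on $A$. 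The $r^{d}$- and $(q\lfloor rq^{-1}\rfloor)^{d}$-pieces, as well as the $i\bfw_{l}'^{\top}\bfones$-piece, admit an analogous boundary reduction yielding identical values from both sides. Hence the two one-sided limits agree and $\Psi_{1}$ is continuous at $x^{*}$.

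For $\Psi_{2}$ under the hypothesis that all $a_{j}$ are equal, I would derive an analogous explicit series by differentiating the formula of Lemma~\ref{lem:char-func} twice in $t$ at $t=0$; the resulting summands have size $\bigOh(m^{2}q^{-m})$ (one extra factor of $m$ per derivative of $\mu_{j}(t)^{-m}$), still yielding uniform convergence. Continuity at non-critical points is again immediate, and the same telescoping argument handles the critical points. The main obstacle is the bookkeeping for the cancellation at critical points: the formula \eqref{eq:coeff-f} contains four inhomogeneous pieces, and the cleanest way to see their joint cancellation is to combine them into a single telescoping quantity using the recursion \eqref{eq:recursion-b} for $\bfb$ (lifted to $\bfB_{\emptyset}$ via \eqref{eq:bfB_C-is-derivative}), so that the tail $\sum_{m\ge M}$ reduces to one boundary term depending only on $(x_{0}\ldots x_{M})_{q}$.
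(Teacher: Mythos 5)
Your proposal takes a genuinely different route from the paper. The paper does \emph{not} manipulate the explicit series \eqref{eq:psi-1-explicit} at critical points. Instead, it notes that the two one-sided limits exist by definition, then constructs two integer sequences $N_k=yq^{pk}$ and $\tilde N_k=N_k-1$ whose $p$-fractional parts approach $\{x\}_p$ from the right and left, respectively, and inserts them into the asymptotic $\sum_{\bfn\in\Omega_N}\T(\bfn)=e_\T N^d\log_q N+N^d\Psi_1(\log_q N)+\bigOh(N^{d-\xi}\log N)$. Since the two sums differ only by a boundary layer of size $\bigOh(N_k^{d-1}\log N_k)$, subtracting gives $\Psi_1(\log_q N_k)-\Psi_1(\log_q\tilde N_k)\to 0$, and the two one-sided limits agree. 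This neatly sidesteps the digit bookkeeping altogether and handles $\Psi_2$ by the same device. Your proposal, by contrast, attacks the series head-on; it would, if carried out, give a more ``local'' and self-contained argument, but at the cost of precisely the computation the paper avoids.

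As written, your proposal has two real gaps.

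First, you never actually verify the telescoping. You say ``the $\bfB_{\emptyset}$-pieces telescope \ldots\ rewrites each tail sum as a boundary contribution at $m=M$'' and that the other pieces ``admit an analogous boundary reduction yielding identical values from both sides,'' and you yourself call this the ``main obstacle.'' That is the entire content of the lemma at a critical point; a sketch of the plan is not a proof. The identity $\bfw_l^\top M=q^d e^{2\pi il/p}\bfw_l^\top$ together with $\bfB_\emptyset(qr)=M\bfB_\emptyset(r)+r^d\bfdelta+\bigOh(r^{d-1}\log r)$-type relations does make the cancellation plausible, but the four summands of \eqref{eq:coeff-f} interact with the power $q^{-dm}$ and the floor functions $\floor{\log_q r}$ and $\floor{rq^{-1}}$ in ways that need to be written out before the claim is established.

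Second, and more seriously, your treatment of integer critical points $x^*\in\integers$ is wrong. There both $\{x\}$ and $\floor x$ jump, and the digit expansions of $q^{\{x\}}$ on the two sides are \emph{not} of the form $x_0\ldots x_M 0^\omega$ versus $x_0\ldots(x_M-1)(q-1)^\omega$ (with the same $x_0$): approaching $x^*$ from the right gives $q^{\{x\}}\to 1^+$ with digits $1,0,0,\ldots$, while approaching from the left gives $q^{\{x\}}\to q^-$ with digits $q-1,q-1,\ldots$, so the truncations are $q^m$ on the right and $q^{m+1}-1$ on the left, not $q^m$ versus $q^m-1$. The phrase ``the jumps at integers being compensated by $p$-periodicity'' does not repair this: periodicity only reduces the problem to $[0,p]$, it does not compensate the jump of $-e_\T\{x\}$ (or of $q^{-d\{x\}}$, or of $e^{2\pi il\floor x/p}$) at an integer. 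That compensation has to be extracted from the series, and is exactly what remains to be shown.
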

\begin{proof}
First note that continuity of $\Psi_{1}$ for
\begin{math}x\in\reals\end{math} with \begin{math}x=\log_q y\end{math} where \begin{math}y\end{math} has no finite \begin{math}q\end{math}-ary expansion
follows from the definitions~\eqref{eq:psi-jk} and~\eqref{eq:psi1-general}. To prove it for \begin{math}x=\log_qy\end{math}
with \begin{math}0\le x<p\end{math} where 
\begin{math}y\end{math} has a finite \begin{math}q\end{math}-ary expansion, observe that the two
one-sided limits exist due to the definition. Next, we prove that they are the
same. Consider the two integer sequences \begin{math}N_k=yq^{pk}\end{math}
and \begin{math}\tilde N_k=N_k-1\end{math}
for \begin{math}k\end{math} large enough such that \begin{math}N_k\end{math} is an integer. 
For a real number \begin{math}z\end{math}, we
write \begin{math}\{z\}_p=p\{z/p\}\end{math} for the unique real number in the interval \begin{math}[0,p)\end{math}
such that \begin{math}z-\{z\}_p\end{math} is an integer multiple of \begin{math}p\end{math}.

This yields
  \begin{align*}
    \lim_{k\to\infty}\{\log_q N_k\}_p&=\lim_{k\to\infty}\{\log_q y+pk\}_{p}=\{x\}_{p}=\lim_{z\to x^+}\{z\}_p,\\
    \lim_{k\to\infty}\{\log_q \tilde{N}_k\}_p&=\lim_{k\to\infty}\{\log_{q}N_{k}+\log_{q}(1-N_{k}^{-1})\}_{p}\\&=\lim_{k\to\infty}\{x+\log_{q}(1-N_{k}^{-1})\}_{p}=\lim_{z\to x^-}\{z\}_p.
  \end{align*}
If we insert the two sequences \begin{math}N_k\end{math} and \begin{math}\tilde N_k\end{math} in 
\begin{equation*}
  \sum_{\bfn\in\Omega_N} \T(\bfn)= e_\T N^d\log_q N+N^d \Psi_1(\log_q N)+
  \bigOh(N^{d-\xi}\log N)
\end{equation*}
(cf.~\eqref{eq:expected}) and take the difference, we get
\begin{equation*}
\mathcal O(N_{k}^{d-1}\log N_k)=N_k^{d}\Psi_1(\log_qN_k)-\tilde
N_k^{d}\Psi_1(\log_q\tilde N_k)+\mathcal O(N_k^{d-\xi}\log N_k).
\end{equation*}
Because \begin{math}\Psi_1(x)\end{math} is bounded by a geometric series by definition, we have
\begin{equation*}
  \Psi_1(\log_q N_k)-\Psi_1(\log_q \tilde N_k)=O(N_k^{-\xi}\log N_k)
\end{equation*}
and in particular
 \begin{equation*}
\lim_{k\rightarrow\infty}\Psi_1(\{\log_q
N_k\}_p)=\lim_{k\rightarrow\infty}\Psi_1(\{\log_q\tilde N_k\}_p).\end{equation*} Therefore,
\begin{math}\Psi_1\end{math} is continuous in \begin{math}x\end{math}.

The continuity of \begin{math}\Psi_2(x)\end{math} at $x=\log_q(y)$ for $y$ with
infinite $q$-ary expansion again follows from the definition of $\Psi_2$. If
all $a_j$ are equal, the
continuity of the fluctuation $-\Psi_1^2+\Psi_2$ of the variance \eqref{eq:var-good} follows as above, where $\log
N_k$ has to be replaced by $\log^2 N_k$ in the error terms. Thus $\Psi_2$ is
also continuous in this case.
\end{proof}

\begin{lemma}\label{lem:hoelder}
  The function $\Psi_{1}$ satisfies a H\"older condition of order $\alpha$ for
  all $\alpha\in(0,1)$.
\end{lemma}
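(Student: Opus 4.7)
The plan is to use the explicit series representation of $\Psi_{1}$ established in Lemma~\ref{lemma:psi_1-explicit}, combined with the bound $f_l(r)=\bigOh(r^{d-1}\log r)$, and to estimate $|\Psi_{1}(x)-\Psi_{1}(y)|$ directly by truncating the series at an index~$L$ determined by $\delta=|x-y|$. By the $p$-periodicity of $\Psi_{1}$ and the continuity at points with finite $q$-ary expansion established in Lemma~\ref{lem:cont-psi}, it suffices to treat pairs $x<y$ inside a single unit interval, so I assume $\floor{x}=\floor{y}$ throughout. Let $\delta=y-x$, fix $\alpha\in(0,1)$, and choose $L=\lceil\log_{q}(1/\delta)\rceil$, so that $q^{-L}\lesssim\delta<q^{-L+1}$. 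Split the $m$-sum into the head $m<L$ and the tail $m\ge L$.

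For the tail, the estimate on $f_l$ yields that each term has size $\bigOh(q^{-m}(m+1))$, so the tail difference contributes at most $\bigOh(q^{-L}L)=\bigOh(\delta\log(1/\delta))$, which is $\bigOh(\delta^{\alpha})$ for any $\alpha<1$. For the head, I plan to estimate each pairwise difference of the $m$-th summand using two bounds depending on whether $\floor{q^{\fpart{x}+m}}$ and $\floor{q^{\fpart{y}+m}}$ coincide: a \emph{smooth} bound $\bigOh(\delta\, q^{-m}(m+1))$ coming from the Lipschitz continuity of the prefactor $q^{-d\fpart{x}-dm}$ (with $f_l$ locally constant), valid when the two floors are equal, and a \emph{jump} bound $\bigOh(q^{-m}(m+1))$ via $|f_l(r+1)-f_l(r)|=\bigOh(r^{d-1}\log r)$ when they differ by one. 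The smooth contributions sum geometrically to $\bigOh(\delta\log(1/\delta))$.

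The main obstacle is controlling the cumulative contribution of the \emph{bad} indices $m<L$ for which the two floors differ. Since an adversarial choice (with $q^{\fpart{x}}$ extremely close to $q-q^{-L}$) can force every $m<L$ to be bad, the naive sum of jump bounds is only $\bigOh(1)$, too coarse for Hölder. I plan to resolve this by exploiting the cancellation implicit in Lemma~\ref{lem:cont-psi}: the large variation of the truncated sum $T(x)$ at such $x$ is precisely compensated by the variation of $q^{-d\fpart{x}}$, because their product (equal to $\Psi_{1}(x)+e_{\T}\fpart{x}$ up to a constant) is continuous. Quantifying this cancellation by reindexing the head over the integers $r=\floor{q^{\fpart{x}+m}}$ rather than over $m$ and matching each jump of $f_l(r)$ against the corresponding Lipschitz change of the prefactor $q^{-d\fpart{x}}$ should yield an aggregate head contribution of $\bigOh(\delta\log^{2}(1/\delta))$. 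Combined with the tail estimate this gives $|\Psi_{1}(x)-\Psi_{1}(y)|=\bigOh(\delta\log^{2}(1/\delta))$, which is $\bigOh(\delta^{\alpha})$ for every $\alpha\in(0,1)$, as required.
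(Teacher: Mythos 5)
Your decomposition (explicit series from Lemma~\ref{lemma:psi_1-explicit}, tail cutoff at $L\sim\log_q(1/\delta)$, growth bound on $f_l$) is the same one the paper uses, and your tail estimate $\sum_{m\ge L}q^{-m}(m+1)=\bigOh(\delta\log(1/\delta))=\bigOh(\delta^\alpha)$ matches the paper's $\bigOh(kq^{-k})=\bigOh(q^{-\alpha k})$ step. The smooth part of the head is also fine. But the proof has a genuine gap at exactly the point you flag as ``the main obstacle,'' and the mechanism you propose to close it does not actually work.

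Your idea is to absorb the $\bigOh(1)$ contribution of the bad indices (those $m<L$ where $\lfloor q^{\fpart{x}+m}\rfloor$ and $\lfloor q^{\fpart{y}+m}\rfloor$ differ) by ``matching each jump of $f_l(r)$ against the corresponding Lipschitz change of the prefactor $q^{-d\fpart{x}}$.'' These two quantities live at incompatible scales: the prefactor changes by $\bigOh(\delta)$ in total (it does not depend on $m$ at all), while the jump terms $q^{-dm}\lvert f_l(r+1)-f_l(r)\rvert=\bigOh(q^{-m}m)$ sum to $\bigOh(1)$ over $m<L$. There is nothing of size $\bigOh(1)$ in the prefactor available to cancel them, and Lemma~\ref{lem:cont-psi} is a purely qualitative continuity statement obtained via the probabilistic expansion, so it cannot be invoked to produce a quantitative bound here. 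As written, the head in the carry-crossing case remains uncontrolled.

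The paper avoids this entirely with a different trick. It fixes $k$ by $q^{-k-1}\le|q^y-q^x|<q^{-k}$, which forces the $(k+1)$-digit truncations of $q^{\fpart{a}}$ and $q^{\fpart{b}}$ to either coincide or differ by exactly $1$. In the first case the head terms $m\le k$ cancel \emph{identically} (no cancellation estimate needed), and the tail plus the H\"older variation of the prefactor $q^{-d\fpart{\cdot}}$ give the bound. In the carry-crossing case, the paper inserts the intermediate point $z$ with $q^{\fpart z}=(b_0\centerdot b_1\ldots b_k)_q$ and approximates it from below by $z_m$ with trailing $(q-1)$'s matching $a$'s digits; then $|\Psi_1(b)-\Psi_1(z)|$ and $|\Psi_1(z_m)-\Psi_1(a)|$ are handled by the coinciding-digit case, and $|\Psi_1(z)-\Psi_1(z_m)|$ is made arbitrarily small using \emph{only} the continuity of $\Psi_1$ at the single point $z$, with no need to quantify cancellation across the $m$-sum. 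If you want to salvage a direct estimate, you would need to find an actual telescoping identity for the bad-index jumps, which is substantially harder than the paper's intermediate-point argument; as it stands, your proposal does not establish the lemma.
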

\begin{proof}
  Let $0<\alpha<1$ be any constant.
  We want to prove that there exists a positive constant $C$ such that
  \begin{equation}
    \label{eq:hoelder-ineq}
    |\Psi_{1}(y)-\Psi_{1}(x)|\leq C |y-x|^{\alpha}
  \end{equation}
  holds for all $x$, $y\in\reals$.
  
  For $x=y$, the left-hand side of \eqref{eq:hoelder-ineq} is $0$ and the inequality  is obviously
  satisfied. From now on, assume that $x<y$. By the periodicity of $\Psi_1$,
  it is sufficient to prove \eqref{eq:hoelder-ineq} for $0\le x< p$.
  
First, we prove \eqref{eq:hoelder-ineq} for the case $0\leq x<y$ and
sufficiently small $y-x<1$.

  Fix such $x$ and $y$ and choose the integer $k$ such that
  \begin{equation*}
    q^{-k-1}\leq\lvert q^y-q^x\rvert<q^{-k}.
  \end{equation*}

  Note that the continuous differentiability of $z\mapsto q^z$ on the compact
  interval $[0, p+1]$ implies that $q^y-q^x= \bigOh(|y - x|)$ and therefore
  \begin{equation}\label{eq:Hoelder-q_-k-bound}
    q^{-k}=\bigOh(|y - x|).
  \end{equation}

  We prove \eqref{eq:hoelder-ineq} in three steps.

  \begin{statement}\label{statement:Hoelder-1}
    Let $a$, $b\in \reals$ with $x\le a< b\le y$ and $\floor{a}=\floor{b}$ such
    that the first $k+1$ digits of the expansions
    \begin{equation*}
      q^{\{a\}}=(a_{0}\centerdot a_{1}\ldots)_{q},\quad
      q^{\{b\}}=(b_{0}\centerdot b_{1}\ldots)_{q}
    \end{equation*}
    coincide, i.e., $a_i=b_i$ for $0\leq i\leq k$. Then
    \begin{equation*}
      \rvert\Psi_{1}(b)-\Psi_{1}(a)\lvert=\bigOh(\lvert y - x\rvert^{\alpha}).
    \end{equation*}
  \end{statement}
  \begin{proof}
    Lemma~\ref{lemma:psi_1-explicit} yields
    \begin{align*}
      \lvert \Psi_{1}(b)-\Psi_{1}(a)\rvert&\leq\lvert e_{\T}\rvert\lvert\fpart
      b-\fpart a\rvert+q^{-d\fpart b}\\
      &\hspace{2em}\cdot\sum_{l\in\calP}\sum_{m\geq 0}q^{-dm}\lvert
      f_{l}((b_{0}\ldots b_{m})_{q})-f_{l}((a_{0}\ldots
      a_{m})_{q})\rvert\\
      &\quad+\lvert q^{-d\fpart b}-q^{-d\fpart
        a}\rvert\sum_{l\in\calP}\sum_{m\geq 0}q^{-dm}\lvert f_{l}((a_{0}\ldots a_{m})_{q})\rvert\\
      &\leq \lvert
      e_{\T}\rvert\lvert\fpart{b}-\fpart{a}\rvert\\
      &\hspace{1em}+\sum_{l\in\calP}\sum_{m>k}q^{-dm}(\lvert
      f_{l}((b_{0}\ldots b_{m})_{q})\rvert+\lvert f_{l}((a_{0}\ldots a_{m})_{q})\rvert)\\
      &\hspace{1em}+\lvert q^{-d\fpart{b}}-q^{-d\fpart{a}}\rvert
      \sum_{l\in\calP}\sum_{m\geq 0} q^{-dm}\lvert f_{l}((a_{0}\ldots a_{m})_{q})\rvert
    \end{align*}
    because the summands for $m\leq k$ cancel in the first sum as the first $k+1$
    digits coincide. By using the estimates
    \begin{align*}
      \lvert\fpart{b}-\fpart{a}\rvert&\le
      \lvert\fpart{b}-\fpart{a}\rvert^{\alpha}=\lvert b - a\rvert^{\alpha},\\
      \lvert q^{-d\fpart{b}}-q^{-d\fpart{a}}\rvert&=\bigOh(\lvert
      b - a\rvert^{\alpha}),\\
      \lvert f_{l}((b_{0}\ldots b_{m})_{q})\rvert&= \bigOh(q^{(d-1)m}m)
    \end{align*}
    (see Lemma~\ref{lemma:psi_1-explicit} for the last estimate),
    we obtain
    \begin{align*}
      \lvert \Psi_{1}(b)-\Psi_{1}(a)\rvert&=\bigOh\biggl(\lvert b - a\rvert^\alpha + \sum_{m>k}mq^{-m} +
      \lvert b - a\rvert^\alpha\biggr) \\
      &= \bigOh(\lvert b - a\rvert^\alpha + kq^{-k}) =
      \bigOh(\lvert b - a\rvert^\alpha + q^{-\alpha k})\\
      &=\bigOh(\lvert b - a\rvert^\alpha +
      \lvert y - x\rvert^\alpha)  = \bigOh(\lvert y - x\rvert^\alpha).
    \end{align*}
    Here, \eqref{eq:Hoelder-q_-k-bound} has been used in the penultimate step.
  \end{proof}

  We now use the continuity of $\Psi_1$ and Statement~\ref{statement:Hoelder-1}
  to remove the condition on coinciding digits from
  Statement~\ref{statement:Hoelder-1}.

  \begin{statement}\label{statement:Hoelder-2}
    Let $a$, $b\in \reals$ with $x\le a< b\le y$ and $\floor{a}=\floor{b}$. Then
    \begin{equation*}
      \rvert\Psi_{1}(b)-\Psi_{1}(a)\lvert=\bigOh(\lvert y - x\rvert^{\alpha}).
    \end{equation*}
  \end{statement}
  \begin{proof}
    We write the expansions of $q^{\{a\}}$ and $q^{\{b\}}$ as 
    \begin{equation*}
      q^{\{a\}}=(a_{0}\centerdot a_{1}\ldots)_{q},\quad
      q^{\{b\}}=(b_{0}\centerdot b_{1}\ldots)_{q}.
    \end{equation*}
    This yields
    \begin{equation*}
      0< q^{\{b\}} - q^{\{a\}} = \frac1{q^{\lfloor a\rfloor}} (
      q^b-q^a)\le 
      q^b-q^a\le 
      q^y-q^x< q^{-k}.
    \end{equation*}
    Thus 
    \begin{equation*}
      0\le (b_{0}\ldots b_{k})_{q} - (a_{0}\ldots a_{k})_{q}\le 1.
    \end{equation*}
    If $(b_{0}\ldots b_{k})_{q} = (a_{0}\ldots a_{k})_{q}$, the result follows
    immediately from Statement~\ref{statement:Hoelder-1}. Otherwise, we have
    \begin{equation}\label{eq:Hoelder-difference-1}
      (b_{0}\ldots b_{k})_{q} = (a_{0}\ldots a_{k})_{q}+1.
    \end{equation}
    For $m\ge 0$, define $z$ and $z_m$ by
    $\floor{z}=\floor{z_{m}}=\floor{a}=\floor{b}$ and
    \begin{align*}
      q^{\{z\}}&=(b_{0}\centerdot b_{1}\ldots b_{k})_{q},\\
      q^{\{z_{m}\}}&=(a_{0}\centerdot a_{1}\ldots a_{k}(q-1)^{m})_{q}.
    \end{align*}
    Then $\lim_{m\rightarrow \infty}z_{m}=z$ because of
    \eqref{eq:Hoelder-difference-1}.

    By construction of $z$ and $z_{m}$, we have $a<z_{m}<z\le b$ for sufficiently
    large $m$.

    By continuity of $\Psi_1$,
    \begin{equation}\label{eq:psi-cont-estimate-2}
      \lvert\Psi_{1}(z)-\Psi_{1}(z_{m})\rvert\leq \lvert y - x\rvert^{\alpha}
    \end{equation}
    holds for sufficiently large $m$.

    This yields
    \begin{align*}
      \lvert\Psi_{1}(b)-\Psi_{1}(a)\rvert&\leq\lvert\Psi_{1}(b)-\Psi_{1}(z)\rvert+\lvert\Psi_{1}(z)-\Psi_{1}(z_m)\rvert\\
      &\quad+\lvert\Psi_{1}(z_m)-\Psi_{1}(a)\rvert.
    \end{align*}
    The third summand can be bounded by Statement~\ref{statement:Hoelder-1}
    (for $a$ and $z_m$) and the second by \eqref{eq:psi-cont-estimate-2}.
    The first summand is either $0$ or can be bounded by Statement~\ref{statement:Hoelder-1} (for $z$ and $b$).
  \end{proof}

  To finally prove \eqref{eq:hoelder-ineq} for sufficiently small $y-x<1$ , we only have to remove the
  assumption $\floor{a}=\floor{b}$ from Statement~\ref{statement:Hoelder-2}.
  We use the idea of the proof of Statement~\ref{statement:Hoelder-2} once
  more.

  Assume that $\floor{y}>\floor{x}$. By our assumption $y<x+1$, this amounts to
  $\floor{y}=\floor{x}+1$. For $m\ge 0$, define $z$ and $z_m$ by $z=\lfloor y\rfloor$,
  $\floor{z_{m}}=\floor{x}$ and $q^{\{z_{m}\}}=((q-1)\centerdot
  (q-1)^{m})_{q}$.
  Then $\lim_{m\rightarrow\infty}z_{m}=z$.  By
  continuity of $\Psi_1$, we have 
  \begin{equation}\label{eq:psi-cont-estimate}
    \lvert\Psi_{1}(z)-\Psi_{1}(z_{m})\rvert\leq \lvert y - x\rvert^{\alpha}
  \end{equation}
  and $x<z_m<z\le y$ for sufficiently large $m$.

  Then, this yields
    \begin{align*}
      \lvert
      \Psi_{1}(y)-\Psi_{1}(x)\rvert&\leq\lvert\Psi_{1}(y)-\Psi_{1}(z)\rvert+\lvert\Psi_{1}(z)-\Psi_{1}(z_{m})\rvert\\
      &\quad+\lvert\Psi_{1}(z_m)-\Psi_{1}(x)\rvert.
    \end{align*}
    The third summand can be bounded by Statement~\ref{statement:Hoelder-2} for
    $x$ and $z_m$ and
    the second by \eqref{eq:psi-cont-estimate}. The first vanishes or can be
    bounded by Statement~\ref{statement:Hoelder-2} for $z$ and $y$.

    This yields
    \begin{equation*}
      \lvert\Psi_{1}(y)-\Psi_{1}(x)\rvert =\bigOh(\lvert y-x\rvert^{\alpha}).
    \end{equation*}
    Therefore, \eqref{eq:hoelder-ineq}  is satisfied with a suitable positive
    constant $C$ for $y-x<\eps$ for some $\eps>0$.

 Assume $y-x\ge \eps$. As
  $\Psi_{1}$ is continuous and periodic,
  $\lvert\Psi_{1}(y)-\Psi_{1}(x)\rvert$ is bounded. Thus, \eqref{eq:hoelder-ineq}
  holds for a suitable positive constant $C$ for $|y-x|\ge \eps$.

 Therefore, the function $\Psi_{1}$ is H\"older continuous of order $\alpha<1$.
\end{proof}

\subsection{Limiting distribution}\label{sec:clt}

Finally, we can prove the parts of Theorem~\ref{thm:asydist} concerning the
approximation of the distribution function and the central limit theorem.

\begin{proof}
  To prove that the distribution function can be approximated by a Gaussian
mixture, we use the Berry-Esseen inequality (cf., for instance,
\cite[Theorems~IX.5]{Flajolet-Sedgewick:ta:analy}) to estimate the difference
between distribution functions. The proof follows the proof of Hwang's Quasi-Power Theorem~\cite{Hwang:1998}. First, we describe the two corresponding characteristic functions.

Let $\hat g_{N}(t)$ be the characteristic function of a mixture of Gaussian or
degenerate distributions with weights
$\lambda_{j}$, means $a_{j}\sqrt{\log_{q}N}$ and variances $b_{j}$ for
$j=1,\ldots,c$, that is
\begin{equation*}
  \hat g_{N}(t)=\sum_{j=1}^{c}\lambda_{j}\exp\Big(ia_{j}\sqrt{\log_{q}N}t-\frac{b_{j}}{2}t^{2}\Big)
\end{equation*}
with $a_{j}$, $b_{j}$ and $\lambda_{j}$ defined in~\eqref{eq:const-of-thm}.

By Lemma~\ref{lem:char-func}, the characteristic function $\hat f_{N}(t)$ of $\T(\bfn)/\sqrt{\log_{q}N}$ is
\begin{align*}
  \hat
  f_{N}(t)&=\sum_{j=1}^{c}\exp\Big(ia_{j}\sqrt{\log_{q}N}t-\frac{b_{j}}{2}t^{2}+\bigOh\Big(\frac{t^{3}}{\sqrt{\log
      N}}\Big)\Big)\\
  &\quad\cdot\sum_{l\in\mathcal
  P}e^{\frac{2\pi
    il}{p}\log_{q}N}\Psi_{lj}\Big(\log_{q}N,\frac{t}{\sqrt{\log_{q}N}}\Big)+R\Big(N,\frac{t}{\sqrt{\log
  N}}\Big)
\end{align*}
for $t\log_{q}^{-\frac 12}N$ in a fixed neighborhood of $0$.

Because of~\eqref{eq:psi-lambda} and $R(N,0)=0$ (see Lemma~\ref{lem:char-func}), we have
\begin{align*}
  \hat
  f_{N}(t)&=\sum_{j=1}^{c}\exp\Big(ia_{j}\sqrt{\log_{q}N}t-\frac{b_{j}}{2}t^{2}\Big)\exp\Big(\bigOh\Big(\frac{t^{3}}{\sqrt{\log
    N}}\Big)\Big)\\
&\quad\cdot\Big(\lambda_{j}+\bigOh\Big(\frac{t}{\sqrt{\log
    N}}\Big)\Big)+\bigOh\Big(N^{-\xi}t\sqrt{\log
  N}\Big).
\end{align*}

Now we use the
inequality $\lvert e^{w}-1\rvert\leq \lvert w\rvert e^{\lvert w\rvert}$, valid for
all complex numbers $w$, to obtain
\begin{multline}\label{eq:approximation-characteristic-function}
  \Big\lvert\frac{1}{t}(\hat f_{N}(t)-\hat
  g_{N}(t))\Big\rvert=\\\sum_{j=1}^{c}\bigOh\Big(\Big(\frac{t^{2}+1}{\sqrt{\log
    N}}\Big)\exp\Big(-\frac{b_{j}}{2}t^{2}+\bigOh\Big(\frac{t^{3}}{\sqrt{\log
    N}}\Big)\Big)\Big)\\
\quad+\bigOh(N^{-\xi}\log^{-\frac 12}N)
\end{multline}
for $t\log_{q}^{-\frac 12}N$ in a small neighborhood of $0$.

From now on, we assume that $b_j\neq 0$.
There is a small neighborhood of $0$ for $t\log_{q}^{-\frac 12}N$ such that
\begin{equation*}
\bigOh\Big(\exp\Big(-\frac{b_{j}}{2}t^{2}+\bigOh\Big(\frac{t^{3}}{\sqrt{\log
    N}}\Big)\Big)\Big)= \bigOh\Big(\exp\Big(-\frac {b_{j}}{4}t^{2}\Big)\Big)
\end{equation*}
holds.

This yields
\begin{align*}
\Big\lvert\frac{1}{t}(\hat f_{N}(t)-\hat
  g_{N}(t))\Big\rvert&=\sum_{j=1}^{c}\bigOh\Big(\exp\Big(-\frac {b_{j}}{4}t^{2}\Big)\frac{t^{2}+1}{\sqrt{\log N}}\Big)+\bigOh(N^{-\xi}\log^{\frac 12}N).
\end{align*}

Now, the Berry-Esseen inequality with $T=c\sqrt{\log_q N}$ for a small constant
$c>0$ (cf., for instance,
\cite[Theorem~IX.5]{Flajolet-Sedgewick:ta:analy}) implies that 
\begin{equation*}
  \sup_{x\in\mathbb R}\,\lvert F_{N}(x)-G_{N}(x)\rvert=
  \bigOh\Big(\frac{1}{\sqrt{\log N}}\Big)
\end{equation*}
where $F_{N}$ is the cumulative distribution function of $\T(\bfn)$ and $G_{N}$ is the
cumulative distribution function of the mixture of Gaussian distributions.

If all $a_{j}$ are equal and $b_j\ge 0$, $G_{N}$ is the
distribution function of a mixture of normal (or degenerate) distributions with mean $e_{\T}\sqrt{\log_{q}N}$
and variances $b_{j}\geq 0$. After subtracting the mean, 
\eqref{eq:approximation-characteristic-function} converges to $0$.
 Thus, 
\begin{equation*}
  \frac{\T(\bfn)-\Expect(\T(\bfn))}{\sqrt{\log_{q}N}}
\end{equation*}
converges in distribution. If all $b_j>0$, then the same estimates as above yield
the speed of convergence.
\end{proof}

This completes the
proof of Theorem~\ref{thm:asydist}.

\section{Fourier Coefficients --- Proof of Theorem~\ref{thm:fourier}}
\label{sec:four-coeff-gener}
This section contains the proof of the theorem about the Fourier coefficients. First, we investigate some Dirichlet series which we will use later. Then,
we prove the formulas given in Theorem~\ref{thm:fourier}. We use the H\"older
condition for $\Psi_{1}$ to prove that its Fourier
series converges.

\begin{lemma}\label{lem:dirichlet-L}
  The Dirichlet series 
  \begin{equation*}
     L(z)=\sum_{r\geq 1}\lfloor\log_{q}r\rfloor(r^{d}-(r-1)^{d}) r^{-z}
  \end{equation*}
is meromorphic in $\Re z>d-1$ with poles in $z=d+\frac{2\pi il}{\log q}$ for
$l\in\mathbb Z$. The main part at $z=d$ is
\begin{align*}
  &\frac{d}{(z-d)^{2}\log q} - \frac{d}{2(z-d)}\end{align*}
and, for $l\neq 0$, the residue at $z=d+\frac{2\pi il}{\log q}$ is
  $\frac{d}{2\pi il}$.

\end{lemma}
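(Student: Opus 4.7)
The plan is to start from the region of absolute convergence $\Re z > d$ and derive a closed-form expression that extends meromorphically to $\Re z > d-1$. First, I will write $\lfloor\log_q r\rfloor = \#\{k\ge 1 : q^k\le r\}$ and interchange the two sums (legitimate by absolute convergence), obtaining
\[
L(z) = \sum_{k\ge 1} R_k(z), \qquad R_k(z) = \sum_{r\ge q^k} (r^d - (r-1)^d)\, r^{-z}.
\]

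Second, I will apply Abel summation to $R_k(z)$, using the telescoping identity $\sum_{r=1}^n (r^d-(r-1)^d) = n^d$ and $r^{-z}-(r+1)^{-z} = z\int_r^{r+1} t^{-z-1}\,dt$. This rewrites $R_k(z)$ as $-(q^k-1)^d q^{-kz} + z\int_{q^k}^{\infty}\lfloor t\rfloor^d\, t^{-z-1}\,dt$. Splitting $\lfloor t\rfloor^d = t^d - (t^d-\lfloor t\rfloor^d)$ and evaluating $\int_{q^k}^{\infty} t^{d-z-1}\,dt = q^{k(d-z)}/(z-d)$ for $\Re z > d$ gives
\[
R_k(z) = -(q^k-1)^d q^{-kz} + \frac{z}{z-d}\,q^{k(d-z)} + E_k(z),
\]
where $E_k(z) = -z\int_{q^k}^{\infty}(t^d-\lfloor t\rfloor^d)\,t^{-z-1}\,dt$. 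From the bound $t^d-\lfloor t\rfloor^d = \bigOh(t^{d-1})$ I get $|E_k(z)| = \bigOh(|z|(q^k)^{d-1-\Re z})$ uniformly on compact subsets of $\{\Re z > d-1\}$, so $\sum_{k\ge 1} E_k(z)$ is holomorphic in this half-plane.

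Third, I will sum over $k$ and expand $(q^k-1)^d$ by the binomial theorem, obtaining
\[
L(z) = \frac{z}{z-d}\cdot\frac{q^{d-z}}{1-q^{d-z}} - \sum_{j=0}^{d}\binom{d}{j}(-1)^j\frac{q^{d-j-z}}{1-q^{d-j-z}} + \sum_{k\ge 1}E_k(z).
\]
In $\Re z > d-1$, only the first term and the $j=0$ summand have poles, all at $z = d + 2\pi il/\log q$ for $l\in\integers$, establishing meromorphy. Finally, the principal parts are read off from the local Laurent expansion of $q^{d-z}/(1-q^{d-z})$. Near $z=d$, combining $1/((z-d)\log q) - 1/2 + \bigOh(z-d)$ with $z/(z-d)=d/(z-d)+1$ and the compensating $j=0$ contribution yields the main part $d/((z-d)^2\log q) - d/(2(z-d))$, while near $z = d + 2\pi il/\log q$ with $l\ne 0$, the residue of the first term, namely $(d\log q + 2\pi il)/(2\pi il\cdot\log q)$, and the residue $-1/\log q$ of the $j=0$ summand combine to $d/(2\pi il)$. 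The main obstacle is the Abel summation together with the verification that $\sum_k E_k(z)$ sums to a holomorphic function on $\{\Re z > d-1\}$, which is what determines the region of meromorphy; everything else is routine manipulation of geometric and Laurent series.
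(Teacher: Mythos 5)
Your proof is correct, and it takes a genuinely different route from the paper's. The paper first uses the binomial theorem on $r^d-(r-1)^d$ to reduce $L(z)$ to $d\,L_1(z-d+1)-\sum_{j=0}^{d-2}\binom{d}{j}(-1)^{d-j}L_1(z-j)$ with $L_1(z)=\sum_{r\ge 1}\lfloor\log_q r\rfloor r^{-z}$, and then continues $L_1$ meromorphically by Euler--Maclaurin summation over the blocks $q^k\le r<q^{k+1}$; the pole structure is read off the closed form $\frac{1}{z-1}\frac{1}{q^{z-1}-1}$ plus holomorphic corrections. You instead unfold $\lfloor\log_q r\rfloor=\#\{k\ge 1:q^k\le r\}$, interchange summations, and treat each tail sum $R_k(z)$ by Abel summation, exploiting the telescope $\sum_{r\le n}(r^d-(r-1)^d)=n^d$ and the elementary bound $t^d-\lfloor t\rfloor^d=\bigOh(t^{d-1})$ to isolate a geometric series plus a holomorphic remainder $\sum_k E_k(z)$. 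Your computation of the Laurent data matches the paper: the product $\frac{z}{z-d}\cdot\frac{q^{d-z}}{1-q^{d-z}}$ contributes $\frac{d}{(z-d)^2\log q}-\frac{d}{2(z-d)}+\frac{1}{(z-d)\log q}$ at $z=d$, the $j=0$ term exactly cancels the stray $\frac{1}{(z-d)\log q}$, and for $l\neq 0$ the residues $\frac{d\log q+2\pi il}{2\pi il\log q}$ and $-\frac{1}{\log q}$ sum to $\frac{d}{2\pi il}$. The paper's route is convenient because the one-variable series $L_1$ appears in multiple guises via the binomial reduction; your route is more direct because the telescoping of $r^d-(r-1)^d$ combines naturally with Abel summation and leaves only a single geometric series to expand, at the price of carrying the full $d$-dimensional weight through the estimate of $E_k(z)$.
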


\begin{proof}
First, we use the binomial theorem to obtain
\begin{equation}\label{eq:L-after-binom-thm}
  L(z)=dL_{1}(z-d+1)-\sum_{j=0}^{d-2}\binom{d}{j}(-1)^{d-j}L_{1}(z-j)
\end{equation}
with $L_{1}=\sum_{r\geq 1}\floor{\log_{q}r}r^{-z}$.
The Dirichlet series $L_{1}(z)$ is holomorphic for $\Re z>1$. Thus, the second
summand in \eqref{eq:L-after-binom-thm} is holomorphic for $\Re z>d-1$. To
obtain the expansion of $L(z)$ at $z$ with $\Re z=d$, we investigate the
Dirichlet series $L_{1}(z)$ at $\Re z=1$.

Let $k\ge 0$ be an integer. We use Euler-Maclaurin
  summation with $f(x)=kx^{-z}$
  to obtain
  \begin{align*}
    \sum_{q^k\le r<q^{k+1}}\frac{\floor{\log_q
        r}}{r^z}&=\int_{q^k}^{q^{k+1}}kx^{-z}\,dx-\frac{k}2(q^{-(k+1)z}-q^{-kz})\\
    &\quad-kz\int_{q^k}^{q^{k+1}}B_1(\{x\})x^{-z-1}\,dx\\
    &=\frac{1}{1-z}(kq^{(k+1)(1-z)}-kq^{k(1-z)})\\
    &\quad-\frac{1}2(kq^{-(k+1)z}-kq^{-kz})\\
    &\quad-z\int_{q^k}^{q^{k+1}}B_1(\{x\})x^{-z-1}\floor{\log_q(x)}\,dx
  \end{align*}
where $B_{1}(x)$ is the first Bernoulli polynomial.
  For $\Re z>1$, summation over $k\ge 0$ yields
  \begin{align*}
    L_{1}(z)&=\frac{1}{1-z}\sum_{k\ge 1}q^{k(1-z)}((k-1)-k)-\frac12\sum_{k\ge
      1}q^{-zk}((k-1)-k)\\
    &\quad
    -z\int_1^{\infty}B_1(\{x\})x^{-z-1}\floor{\log_q(x)}\,dx\\
    &=\frac{1}{z-1}\frac{1}{q^{z-1}-1}+\frac12\frac{1}{q^z-1}
    -z\int_1^{\infty}B_1(\{x\})x^{-z-1}\floor{\log_q(x)}\,dx.
  \end{align*}
  The second summand and the integral are clearly holomorphic for
  $\Re z>0$. Thus, $L_{1}(z)$ can be continued meromorphically to $\Re z>0$ with
  poles coming from the first summand.

  The expansion around $z=1$ is
  \begin{align*}
    \frac{1}{z-1}\frac{1}{q^{z-1}-1}+O(1)
\ifdetails{
&=\frac{1}{z-1}\frac{1}{\log q (z-1)+\frac12 \log^2q (z-1)^2+O((z-1)^3)}+O(1)\\
    &=\frac{1}{\log q(z-1)^2}\frac{1}{1+\frac12 \log q (z-1) + O((z-1)^2)}+O(1)\\
    &=\frac{1}{\log q(z-1)^2}\left(1-\frac12 \log q(z-1)+ O((z-1)^2)\right)\\}\fi
    &=\frac{1}{ (z-1)^2\log q}-\frac{1}{2(z-1)}+O(1).
  \end{align*}

Thus,  by \eqref{eq:L-after-binom-thm}, we obtain the main part and the residues of $L(z)$ at $z=d+\frac{2\pi
  il}{\log q}$ for $l\in\mathbb Z$ as stated in the
lemma.
\end{proof}

\begin{lemma}\label{lem:dirichlet-Z}
  The Dirichlet series
  \begin{equation*}
    Z(z)=\sum_{r\geq1}(r^{d}-(r-1)^{d})r^{-z}
  \end{equation*}
  is meromorphic in $\mathbb C$ with simple poles in $z=j$,
  $j\in\{1,\ldots,d\}$ with residues $\binom{d}{j-1}(-1)^{d-j}$.
\end{lemma}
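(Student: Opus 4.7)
The plan is to express $Z(z)$ as a finite linear combination of shifted Riemann zeta functions and then read off the poles and residues from the known simple pole of $\zeta(s)$ at $s=1$.

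First I would apply the binomial theorem to expand $(r-1)^d$, obtaining
\begin{equation*}
r^{d}-(r-1)^{d}=-\sum_{k=1}^{d}\binom{d}{k}(-1)^{k}r^{d-k}=\sum_{k=1}^{d}\binom{d}{k}(-1)^{k+1}r^{d-k}.
\end{equation*}
Substituting this into the definition of $Z(z)$ and interchanging the two (finite and absolutely convergent for $\Re z > d$) summations gives
\begin{equation*}
Z(z)=\sum_{k=1}^{d}\binom{d}{k}(-1)^{k+1}\sum_{r\ge 1}r^{d-k-z}=\sum_{k=1}^{d}\binom{d}{k}(-1)^{k+1}\zeta(z-d+k),
\end{equation*}
valid initially for $\Re z > d$.

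Next I would reindex by setting $j=d-k+1$, which converts the sum into
\begin{equation*}
Z(z)=\sum_{j=1}^{d}\binom{d}{j-1}(-1)^{d-j}\zeta(z-j+1).
\end{equation*}
Since $\zeta(s)$ is meromorphic on $\mathbb{C}$ with a unique simple pole at $s=1$ of residue $1$, each term $\zeta(z-j+1)$ continues meromorphically to $\mathbb{C}$ with a unique simple pole at $z=j$ of residue $1$. For distinct values of $j$ in $\{1,\dots,d\}$, these poles are located at distinct points, so there is no cancellation or merging, and the residue of $Z(z)$ at $z=j$ is simply $\binom{d}{j-1}(-1)^{d-j}$, as claimed.

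There is essentially no obstacle here; the only point to double-check is the sign bookkeeping after the reindexing and that the initial domain $\Re z > d$ is sufficient to justify the interchange of the finite sum with the Dirichlet series before invoking analytic continuation.
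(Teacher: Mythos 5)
Your proof is correct and matches the paper's approach: both expand $r^{d}-(r-1)^{d}$ by the binomial theorem and rewrite $Z(z)$ as a finite linear combination of shifted Riemann $\zeta$-functions, then read off the poles and residues. The paper simply leaves the identity in the form $Z(z)=\sum_{j=0}^{d-1}\binom{d}{j}(-1)^{d-j+1}\zeta(z-j)$ without the explicit reindexing step, but the content is identical.
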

\begin{proof}
  The binomial theorem yields
  \begin{equation*}
    Z(z)=\sum_{j=0}^{d-1}\binom{d}{j}(-1)^{d-j+1}\zeta(z-j),
  \end{equation*}
where $\zeta$ is the Riemann $\zeta$-function.
 The result follows from the
unique pole of $\zeta(z)$ at $z=1$ with residue $1$.
\end{proof}

Denote by
  $\zeta(z,\alpha)$ the Hurwitz $\zeta$-function
  \begin{align*}
    \zeta(z,\alpha)=\sum_{r>-\alpha}(r+\alpha)^{-z}.
  \end{align*}
Furthermore $\psi$ is the digamma function.

\begin{lemma}\label{lem:dirichlet-J}
  For $0\leq\alpha< 1$ and and an integer $0\leq j\leq d-1$, the Dirichlet series
  \begin{equation*}
    J(z,\alpha, j)=\sum_{r\geq1}r^{j}(r+\alpha)^{-z}
  \end{equation*}
  is analytic for $\Re z>j+1$. For $j=d-1$, it is meromorphic for $\Re z>d-1$ with a simple pole at $z=d$ with expansion
  \begin{equation}\label{eq:J-expansion}
    \begin{aligned}
      J(z,\alpha, d-1)&=\frac{1}{z-d}-\psi(\alpha+[\alpha=0])-[\alpha>0\wedge
      d=1]\alpha^{-1}\\
      &\quad+\sum_{k=0}^{d-2}\binom{d-1}{k}(-\alpha)^{d-1-k}\zeta(d-k,\alpha)+\bigOh(z-d).
    \end{aligned}
  \end{equation}
\end{lemma}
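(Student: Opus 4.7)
The plan is to expand $r^{d-1}$ via the binomial theorem in powers of $r+\alpha$, thereby reducing $J(z,\alpha,d-1)$ to a linear combination of Hurwitz-zeta-type Dirichlet series, and then to read off the expansion at $z=d$ from the classical Laurent expansion of the Hurwitz zeta function at its pole $s=1$. Analyticity for all $0\le j\le d-1$ in the half-plane $\Re z>j+1$ is routine: since $r+\alpha\ge r$, the summand is dominated in absolute value by $r^{j-\Re z}$, so the series converges absolutely and locally uniformly on $\Re z>j+1$, settling the first assertion for $j<d-1$.

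For $j=d-1$ I would write $r^{d-1}=((r+\alpha)-\alpha)^{d-1}=\sum_{k=0}^{d-1}\binom{d-1}{k}(r+\alpha)^k(-\alpha)^{d-1-k}$ and interchange summations (valid for $\Re z>d$ by absolute convergence) to get
\begin{equation*}
J(z,\alpha,d-1)=\sum_{k=0}^{d-1}\binom{d-1}{k}(-\alpha)^{d-1-k}T(z-k,\alpha),\qquad T(s,\alpha):=\sum_{r\ge 1}(r+\alpha)^{-s}.
\end{equation*}
Under the paper's convention, $T(s,0)=\zeta(s,0)=\zeta(s)$ and, for $\alpha>0$, $T(s,\alpha)=\zeta(s,\alpha)-\alpha^{-s}$ (removing the $r=0$ term of the Hurwitz sum). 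In either case $T(\cdot,\alpha)$ is meromorphic on $\mathbb{C}$ with a unique simple pole at $s=1$ of residue $1$, and the classical Laurent expansions $\zeta(s,\alpha)=\frac{1}{s-1}-\psi(\alpha)+\bigOh(s-1)$ (for $\alpha>0$) and $\zeta(s)=\frac{1}{s-1}+\gamma+\bigOh(s-1)$, combined with $\gamma=-\psi(1)$, yield the uniform formula
\begin{equation*}
T(s,\alpha)=\frac{1}{s-1}-\psi(\alpha+[\alpha=0])-[\alpha>0]\alpha^{-s}+\bigOh(s-1).
\end{equation*}
This already meromorphically continues $J(z,\alpha,d-1)$ to $\Re z>d-1$: only the $k=d-1$ summand contributes a pole, which is simple at $z=d$ with residue $1$.

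To extract the full expansion at $z=d$, the $k=d-1$ summand contributes $\frac{1}{z-d}-\psi(\alpha+[\alpha=0])-[\alpha>0]\alpha^{-1}$, while each $k\le d-2$ summand equals $\binom{d-1}{k}(-\alpha)^{d-1-k}T(d-k,\alpha)$ and is analytic at $z=d$. For $\alpha=0$ the factor $(-\alpha)^{d-1-k}$ annihilates every such term, immediately matching the claim. For $\alpha>0$, substituting $T(d-k,\alpha)=\zeta(d-k,\alpha)-\alpha^{k-d}$ isolates exactly the $\zeta(d-k,\alpha)$ pieces stated in \eqref{eq:J-expansion} and leaves an $\alpha^{-1}$ remainder equal to
\begin{equation*}
-\alpha^{-1}\sum_{k=0}^{d-2}\binom{d-1}{k}(-1)^{d-1-k}=\alpha^{-1}[d>1],
\end{equation*}
by the standard identity $\sum_{k=0}^{d-1}\binom{d-1}{k}(-1)^{d-1-k}=0$ valid for $d>1$. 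This cancels the $-\alpha^{-1}$ contributed by the $k=d-1$ term precisely when $d>1$, thereby producing the Iverson bracket $[\alpha>0\wedge d=1]\alpha^{-1}$ in the statement. The only subtlety lies in this combinatorial cancellation combined with the bifurcation between $\alpha=0$ and $\alpha>0$ forced by the paper's Hurwitz-zeta convention; everything else is the binomial theorem and the classical expansion of $\zeta(s,\alpha)$.
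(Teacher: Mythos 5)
Your proof is correct and follows essentially the same route as the paper: binomial expansion of $r^{d-1}=((r+\alpha)-\alpha)^{d-1}$, reduction to shifted Hurwitz-zeta series, the Laurent expansion $\zeta(s,\alpha)=\frac{1}{s-1}-\psi(\alpha+[\alpha=0])+\bigOh(s-1)$, and the identity $\sum_{k=0}^{d-1}\binom{d-1}{k}(-\alpha)^{d-1-k}\alpha^{k}=0^{d-1}$ to produce the Iverson bracket $[\alpha>0\wedge d=1]$. The paper merely collapses the $[\alpha>0]\alpha^{-z+k}$ contributions over the full range $k=0,\ldots,d-1$ in one step before expanding at $z=d$, whereas you expand the $k=d-1$ term first and then observe the cancellation against the $k\le d-2$ tail; the underlying computation is identical.
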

\begin{proof}
  As $r^{j}(r+\alpha)^{-z}=\bigOh(r^{j-\Re z})$, $J$ is analytic for $\Re
  z>j+1$. Now, let $j=d-1$.

  The binomial theorem yields
  \begin{align*}
    J(z, \alpha, d-1) &= \sum_{r\ge 1}(r+\alpha - \alpha)^{d-1}(r+\alpha)^{-z}\\
    &= \sum_{k=0}^{d-1}\binom{d-1}k(-\alpha)^{d-1-k}\sum_{r\ge 1}(r+\alpha)^{-(z-k)}\\
    &= \sum_{k=0}^{d-1}\binom{d-1}k(-\alpha)^{d-1-k}\big(\zeta(z-k, \alpha)-[\alpha>0]\alpha^{-z+k}\big)\\
    &= \zeta(z-d+1,
    \alpha)+\sum_{k=0}^{d-2}\binom{d-1}k(-\alpha)^{d-1-k}\zeta(z-k,
    \alpha)\\
    &\quad -[\alpha>0\wedge d=1]\alpha^{-z}.
  \end{align*}
  Using the expansion (cf. \cite[p.\ 271]{Whittaker-Watson:1963})
  \begin{equation*}
    \zeta(z, \alpha)= \frac{1}{z-1} -\psi(\alpha+[\alpha=0])+\bigOh(z-1)
  \end{equation*} 
yields \eqref{eq:J-expansion}.
\end{proof}

\begin{lemma}\label{lem:dirichlet-B}
  Let $k\in\mathbb Z$. The Dirichlet series
  \begin{equation*}
B(z)=\bfw_{k}^{\top}\sum_{r=1}^{\infty}\left(\bfB_{\emptyset}(r+1)-2\bfB_{\emptyset}(r)+\bfB_{\emptyset}(r-1)\right)r^{-z}
  \end{equation*}
  is analytic for $\Re z>d-1$.
\end{lemma}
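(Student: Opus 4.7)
The plan is to apply Abel summation once to reduce $B(z)$—a Dirichlet series of \emph{second} differences of $\bfB_{\emptyset}$—to a series involving \emph{first} differences, which admit a smaller growth estimate.

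First, I would establish the key bound $\lVert \bfB_{\emptyset}(r+1) - \bfB_{\emptyset}(r)\rVert_{\infty}=\bigOh(r^{d-1}\log r)$. By \eqref{eq:bfB-def}, this difference is a sum of $\bfb(\bfn)$ over the ``shell''
$\{\bfn\colon 0\leq n_i\leq r,\; \exists i\colon n_i=r\}$,
which has cardinality $(r+1)^d-r^d=\bigOh(r^{d-1})$. Since $\bfb(\bfn)$ is the sum of $\bigOh(\log_q\lVert\bfn\rVert_{\infty})$ transition outputs from the finite set of output labels, together with a final output, we have $\lVert\bfb(\bfn)\rVert_{\infty}=\bigOh(\log r)$, which yields the claimed first-difference bound. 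Setting $G(r):=\bfw_k^{\top}\bigl(\bfB_{\emptyset}(r+1)-\bfB_{\emptyset}(r)\bigr)$, this gives $|G(r)|=\bigOh(r^{d-1}\log r)$.

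Writing the summand of $B(z)$ as $G(r)-G(r-1)$ and setting $A_r=\sum_{j=1}^{r}\bigl(G(j)-G(j-1)\bigr)=G(r)-G(0)$, standard partial summation gives
\begin{equation*}
\sum_{r=1}^{R}\bigl(G(r)-G(r-1)\bigr)r^{-z}
=A_R R^{-z}+\sum_{r=1}^{R-1}A_r\bigl(r^{-z}-(r+1)^{-z}\bigr).
\end{equation*}
Under the assumption $\Re z>d-1$, the boundary contribution $A_R R^{-z}=\bigOh(R^{d-1-\Re z}\log R)$ tends to $0$ as $R\to\infty$. Using $r^{-z}-(r+1)^{-z}=\bigOh(|z|\,r^{-\Re z-1})$, each term of the tail sum is of order $\bigOh(r^{d-2-\Re z}\log r)$, so the series converges absolutely and uniformly on every compact subset of $\{\Re z>d-1\}$. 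Thus the identity holds on the region $\{\Re z>d\}$ (where the original Dirichlet series of $B(z)$ converges absolutely by the crude bound $|G(r)-G(r-1)|=\bigOh(r^{d-1}\log r)$) and provides an analytic extension of $B(z)$ to $\{\Re z>d-1\}$.

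No serious obstacle arises; the argument is a routine partial-summation/uniform-convergence computation. The one step that requires care is the first-difference estimate for $\bfB_{\emptyset}$—getting the right exponent $r^{d-1}$ (rather than $r^d$) is what buys the extra unit of real part for the analyticity domain.
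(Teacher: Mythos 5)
Your proof is correct and takes essentially the same route as the paper: establish the first-difference bound $\lVert\bfB_{\emptyset}(r+1)-\bfB_{\emptyset}(r)\rVert=\bigOh(r^{d-1}\log r)$ (the paper does this by identifying $\bfB_{\emptyset}(r+1)-\bfB_{\emptyset}(r)=\sum_{\emptyset\neq C}\bfB_{C}(r)$, which is the same shell decomposition you use), then recognize the summand of $B(z)$ as the first difference of that quantity and apply partial summation---which the paper delegates to a cited theorem on Dirichlet series with polynomially bounded partial sums---to obtain analyticity for $\Re z>d-1$.
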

\begin{proof}
 By the definition \eqref{eq:bfB-def}, we have
  \begin{equation}\label{eq:B-sum-of-Bs}
\bfB_{\emptyset}(r+1)-\bfB_{\emptyset}(r)=\sum_{\emptyset\neq
    C\subseteq\{1,\ldots, d\}}\bfB_{C}(r),
\end{equation}
which can be bounded by $\|\bfB_{C}(r)\|=\bigOh(r^{d-1}\log r)$. Thus,
\begin{equation*}
  B(z)=\bfw_{k}^{\top}\sum_{\emptyset\neq C\subseteq\{1,\ldots, d\}}\sum_{r\geq 1}(\bfB_{C}(r)-\bfB_{C}(r-1))r^{-z}
\end{equation*}
which converges for $\Re z>d-1$ by \cite[Theorem~8.1]{Apostol:1976:modul-funct}.
\end{proof}

The
vector-valued functions \begin{math}\bfH_{C}(z)\end{math} are defined by the Dirichlet series
\begin{equation}\label{eq:dirichlet-H}
\bfH_{C}(z)=\sum_{r\geq 1}\bfB_{C}(r)r^{-z}.
\end{equation}
By \eqref{eq:dirichlet-H-definition} and \eqref{eq:B-sum-of-Bs}, this yields
\begin{equation}\label{eq:H-sum-H-C}
\bfH(z)=\sum_{\emptyset\neq C\subseteq\{1,\ldots,d\}}\bfH_{C}(z)=\sum_{r\geq1}(\bfB_{\emptyset}(r+1)-\bfB_{\emptyset}(r))r^{-z}.
\end{equation}

Next, we investigate the Dirichlet series \begin{math}\bfH_{C}\end{math}. In
particular, we determine its behavior at \begin{math}z=d+\chi_k\end{math} and provide an infinite
functional equation to compute its residues at these points. This will finally
give us the residues of $\bfH$ in~\eqref{eq:gen-fourier-coeff}. We use a
similar method as Grabner and Hwang in~\cite{Grabner-Hwang:2005:digit}. 

For this infinite recursion, define
 \begin{equation}\label{eq:bfdelta-C-D}
   \bfdelta_{C,D}^{\eps}=\sum_{\substack{\beta_{i}=0\\i\not\in C\cup
     D}}^{q-1}\sum_{\substack{\beta_{i}=0\\i\in
       D}}^{\eps-1}\sum_{\substack{\beta_{i}=\eps\\i\in C}}\bfdelta_{\bfbeta},
 \end{equation}
in analogy to the definition
 of $M_{C,D}^{\eps}$. As before, the $s$-th entry of $\bfdelta_{\bfeps}$ is the output
 label of the transition starting in $s$ with input label $\bfeps$.
Then, $\bfdelta=\bfdelta_{\emptyset,\emptyset}^{\eps}$ holds independently of
$\eps$. Furthermore,
$\bfdelta_{C,D}^{\eps}=\left.\frac{d}{dt}M_{C,D}^{\eps}\bfones\right|_{t=0}$
by \eqref{eq:delta-eps-is-derivative}.
\begin{lemma}\label{lem:inf-recursion}
Let $C\neq \emptyset$.
For \begin{math}\Re z>d\end{math} and $C\neq\emptyset$, the Dirichlet series \begin{math}\bfH_{C}(z)\end{math} satisfies the following infinite
recursion
\begin{equation}\label{eq:infrecursion-general}
  \begin{aligned}
    &\Big(1-q^{-z}\sum_{\eps=0}^{q-1}M_{C,\emptyset}^{\eps}\Big)\bfH_{C}(z)=\\
    &\qquad\qquad\sum_{\eps=1}^{q-1}\bfB_{C}(\eps)\eps^{-z}+q^{-z}\sum_{\emptyset\neq
      D\subseteq C^{c}}\sum_{\eps=0}^{q-1}M_{C,D}^{\eps}\bfH_{C\cup
      D}(z)\\
    &\qquad\qquad+q^{-z}\sum_{D\subseteq
      C^{c}}\sum_{\eps=0}^{q-1}\bfdelta_{C,D}^{\eps}J\Big(z,\frac{\eps}{q},d-|D|-|C|\Big)\\
    &\qquad\qquad+\sum_{D\subseteq C^{c}}\sum_{m\geq
      1}\binom{-z}{m}q^{-z-m}\sum_{\eps=0}^{q-1}M_{C,D}^{\eps}\eps^{m}\bfH_{C\cup
      D}(z+m).
  \end{aligned}
\end{equation}

It is analytic for $\Re z>d-|C|+1$. For
$|C|=1$ and $k\neq 0$, $\bfw_{k}^{\top}\bfH_{C}$ has a possible simple pole in
$z=d+\chi_{k}$ with residue the right-hand side
of~\eqref{eq:infrecursion-general} evaluated at $z=d+\chi_{k}$ and divided by $\log q$.
For $|C|=1$, $\bfw_{0}^{\top}\bfH_{C}$ has a possible double pole with main
part
\begin{equation*}
   \frac{e_{\T}}{\log q}\frac{1}{(z-d)^{2}}+\Big(\frac{e_{\T}}{2}+\frac{h_{C}}{\log
  q}\Big)\frac{1}{z-d}
\end{equation*}
where $h_{C}$ is given in~\eqref{eq:res-rhs}.
\end{lemma}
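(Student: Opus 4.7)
The plan is to first obtain a recursion for $\bfB_C(qN+\eps)$ by applying $\frac{d}{dt}[\,\cdot\,\bfu(t)]|_{t=0}$ to \eqref{eq:recursion-general} for $G_C$, then to split $\bfH_C(z)$ according to the base-$q$ digits of its summation variable to produce the claimed functional equation, and finally to analyse the resulting equation for $|C|=1$ by left-projecting onto $\bfw_k^\top$. Using \eqref{eq:bfB_C-is-derivative}, the product rule, the fact that at $t=0$ the row sums of $M_\bfeps$ equal one (so that $G_{C\cup D}(N)\bfones|_{t=0}=N^{d-|C\cup D|}\bfones$), and the identity $\frac{d}{dt}[M_{C,D}^\eps\bfones]|_{t=0}=i\bfdelta_{C,D}^\eps$ (coming from \eqref{eq:delta-eps-is-derivative} and \eqref{eq:bfdelta-C-D}), differentiating \eqref{eq:recursion-general} for $C\neq\emptyset$ yields
\begin{equation*}
\bfB_C(qN+\eps)=\sum_{D\subseteq C^{c}}\Bigl(\bfdelta_{C,D}^\eps\,N^{d-|C\cup D|}+M_{C,D}^\eps\,\bfB_{C\cup D}(N)\Bigr).
\end{equation*}

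I then split $\bfH_C(z)=\sum_{r\geq 1}\bfB_C(r)r^{-z}$ into the $N=0$ contribution $\sum_{\eps=1}^{q-1}\bfB_C(\eps)\eps^{-z}$ and the $N\geq 1$ contribution. Writing $(qN+\eps)^{-z}=q^{-z}(N+\eps/q)^{-z}$, the $\bfdelta$-term contributes $q^{-z}\sum_{D,\eps}\bfdelta_{C,D}^\eps\,J(z,\eps/q,d-|C\cup D|)$, while the binomial expansion $(N+\eps/q)^{-z}=\sum_{m\geq 0}\binom{-z}{m}(\eps/q)^m N^{-z-m}$ turns the $M$-term into
\begin{equation*}
q^{-z}\sum_{D\subseteq C^c}\sum_{\eps=0}^{q-1}M_{C,D}^\eps\sum_{m\geq 0}\binom{-z}{m}(\eps/q)^m\,\bfH_{C\cup D}(z+m).
\end{equation*}
The interchange of summations is justified in $\Re z>d-|C|+1$ by absolute convergence. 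Isolating the $m=0$, $D=\emptyset$ piece $q^{-z}\sum_\eps M_{C,\emptyset}^\eps\bfH_C(z)$ on the left produces precisely \eqref{eq:infrecursion-general}, and analytic continuation extends the identity to its stated meromorphic domain.

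The analyticity assertion follows from $\|\bfB_C(r)\|_\infty=\bigOh(r^{d-|C|}\log r)$ (a consequence of $\|\bfb(\bfn)\|=\bigOh(\log\|\bfn\|)$ and the range of summation in \eqref{eq:bfB-def}), which yields absolute convergence of $\bfH_C(z)$ in $\Re z>d-|C|+1$. For the pole analysis with $|C|=1$ I will left-multiply \eqref{eq:infrecursion-general} by $\bfw_k^\top$: since $\sum_{\eps=0}^{q-1}M_{C,\emptyset}^\eps=M$ and $\bfw_k^\top$ is either zero or a left eigenvector of $M$ with eigenvalue $q^d\exp(\frac{2\pi ik}{p})$, the left-hand side becomes $\bigl(1-q^{d-z}\exp(\frac{2\pi ik}{p})\bigr)\bfw_k^\top\bfH_C(z)$, whose prefactor has, within $\Re z>d-1$, a unique simple zero at $z=d+\chi_k$ with derivative $\log q$ there. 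By Lemma~\ref{lem:dirichlet-J} the $J$-contributions are analytic in $\Re z>d-1$ apart from the $D=\emptyset$ case, in which $J(z,\eps/q,d-1)$ has a simple pole at $z=d$; the remaining $\bfH_{C\cup D}(z)$- and $\bfH_{C\cup D}(z+m)$-terms are analytic in $\Re z>d-1$ since either $|C\cup D|\geq 2$ or $m\geq 1$.

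Hence for $k\neq 0$ the right-hand side is analytic at $z=d+\chi_k$, yielding the claimed simple pole of $\bfw_k^\top\bfH_C(z)$ with residue equal to the right-hand side there divided by $\log q$. For $k=0$ the prefactor expands as $(z-d)\log q\bigl(1-\tfrac12(z-d)\log q+O((z-d)^2)\bigr)$, so
\begin{equation*}
\frac{1}{1-q^{d-z}}=\frac{1}{(z-d)\log q}+\frac12+O(z-d),
\end{equation*}
while the pole part of the right-hand side at $z=d$ comes solely from the $D=\emptyset$ summands of the $J$-contribution and equals $q^{-d}\bfw_0^\top\sum_\eps\bfdelta_{C,\emptyset}^\eps(z-d)^{-1}=q^{-d}\bfw_0^\top\bfdelta(z-d)^{-1}=e_\T(z-d)^{-1}$, where I use $\sum_\eps\bfdelta_{C,\emptyset}^\eps=\bfdelta$ and \eqref{eq:bfw_bfdelta} at $l=0$. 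Writing the right-hand side as $e_\T(z-d)^{-1}+h_C+O(z-d)$ and multiplying through with the expansion above produces the stated double-pole expansion. The main technical point is aligning the simple zero of $1-q^{d-z}$ with the simple pole of the $J$-term so as to read off both the $(z-d)^{-2}$ and $(z-d)^{-1}$ coefficients; the interchange of summations and meromorphic continuation along the infinite recursion are routine given the exponential decay from the $q^{-m}$ factor and convergence of $\bfH_{C\cup D}(z+m)$ for $m\geq 1$.
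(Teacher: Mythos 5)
Your proposal is correct and follows essentially the same route as the paper's proof: derive the recursion for $\bfB_C(qr+\eps)$ by differentiating \eqref{eq:recursion-general} against $\bfu(t)$ at $t=0$, split $\bfH_C$ by base-$q$ digits, expand $(qr+\eps)^{-z}$ as a binomial series, isolate the $m=0$, $D=\emptyset$ term to obtain \eqref{eq:infrecursion-general}, and then left-project by $\bfw_k^\top$ to locate the poles at $z=d+\chi_k$; your explicit expansion of $(1-q^{d-z})^{-1}$ and the identification of the $e_\T(z-d)^{-1}$ pole from the $D=\emptyset$ contribution of $J$ via $\sum_\eps\bfdelta_{C,\emptyset}^\eps=\bfdelta$ (valid since $|C|=1$) and \eqref{eq:bfw_bfdelta} match the paper exactly.
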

\begin{remark}
  The infinite recursion~\eqref{eq:infrecursion-general} can be used to
  numerically compute the values of $\bfH_{C}$ and its residues at
  $z=d+\chi_{k}$ with arbitrary precision. It numerically converges fast
if the first terms of the Dirichlet series $\bfH_{C}$ are computed explicitly.
\end{remark}
\begin{proof}As $\bfB_{C}(r)=\bigOh(r^{d-\lvert C\rvert}\log r)$, the
  Dirichlet series $\bfH_{C}$ is analytic for $\Re z>d-\lvert C\rvert+1$.

By multiplying \eqref{eq:recursion-general} with $\bfu(t)$, differentiating
with respect to $t$ at $t=0$ and using~\eqref{eq:bfB_C-is-derivative},
\eqref{eq:matrices-M} and \eqref{eq:bfdelta-C-D}, we obtain the recursion
\begin{equation}
  \label{eq:recursion-B-C}
  \bfB_{C}(qr+\eps)=\sum_{D\subseteq
    C^{c}}M_{C,D}^{\eps}\bfB_{C\cup D}(r)+\bfdelta_{C,D}^{\eps}r^{d-|D|-|C|}
\end{equation}
for $C\neq\emptyset$, $\{1,\ldots, d\}$ and $qr+\eps\geq
0$. By \eqref{eq:rec-g}, this recursion is
also valid for $C=\{1,\ldots, d\}$ and $qr+\eps>0$.

By \eqref{eq:recursion-B-C}, we have
\begin{align*}
\bfH_{C}(z)&=\sum_{\eps=1}^{q-1}\bfB_{C}(\eps)\eps^{-z}+\sum_{\eps=0}^{q-1}\sum_{r\geq
  1}\bfB_{C}(qr+\eps)(qr+\eps)^{-z}\\
&=\sum_{\eps=1}^{q-1}\bfB_{C}(\eps)\eps^{-z}+\\
&\quad \sum_{D\subseteq C^{c}}\sum_{\eps=0}^{q-1}\sum_{r\geq
  1}(M_{C,D}^{\eps}\bfB_{C\cup D}(r)+\bfdelta_{C,D}^{\eps}r^{d-\lvert
  D\rvert-\lvert C\rvert})\\&\qquad\qquad\qquad\qquad\cdot q^{-z}r^{-z}\Big(1+\frac{\eps}{qr}\Big)^{-z}
\end{align*}
for  $C\neq \emptyset$. Expanding \begin{math}(1+\eps/(qr))^{-z}\end{math} as a binomial
series yields
\begin{align*}
\bfH_{C}(z)&=\sum_{\eps=1}^{q-1}\bfB_{C}(\eps)\eps^{-z}\\
&\quad+\sum_{D\subseteq
  C^{c}}\sum_{\eps=0}^{q-1}\sum_{r\geq 1}\sum_{m\geq
  0}\binom{-z}{m}M_{C,D}^{\eps}\eps^{m}q^{-z-m}\bfB_{C\cup D}(r)r^{-z-m}\\
&\quad+q^{-z}\sum_{D\subseteq
C^{c}}\sum_{\eps=0}^{q-1}\bfdelta_{C,D}^{\eps}
J\Big(z,\frac{\eps}{q},d-|D|-|C|\Big)\\
&=\sum_{\eps=1}^{q-1}\bfB_{C}(\eps)\eps^{-z}+q^{-z}\sum_{D\subseteq
  C^{c}}\sum_{\eps=0}^{q-1}M_{C,D}^{\eps}\bfH_{C\cup D}(z)\\
&\quad+q^{-z}\sum_{D\subseteq
C^{c}}\sum_{\eps=0}^{q-1}\bfdelta_{C,D}^{\eps}
J\Big(z,\frac{\eps}{q},d-|D|-|C|\Big)
\\
&\quad+\sum_{D\subseteq C^{c}}\sum_{m\geq
  1}\binom{-z}{m}q^{-z-m}\sum_{\eps=0}^{q-1}M_{C,D}^{\eps}\eps^{m}\bfH_{C\cup D}(z+m)
\end{align*}
for
\begin{math}\Re z> d\end{math} and $C\neq \emptyset$. Collecting \begin{math}\bfH_{C}(z)\end{math} on the left-hand side results in~\eqref{eq:infrecursion-general}.

To compute the residues of $\bfw_{k}^{\top}\bfH_{C}$ for $|C|=1$ at
$z=d+\chi_{k}$, note that $\sum_{\eps=0}^{q-1}M_{C,\emptyset}^{\eps}=M$
holds
independently of $C$.

We multiply~\eqref{eq:infrecursion-general}
with the left eigenvector \begin{math}\bfw_{k}^\top\end{math} which results in 
    \begin{equation}\label{eq:infrecursion-again-general}
      \begin{aligned}
        &\Big(1-q^{d-z}\exp\Big(\frac{2\pi
          ik}{p}\Big)\Big)\bfw_{k}^{\top}\bfH_{C}(z)=\\
        &\hspace{4em}\bfw_{k}^{\top}\sum_{\eps=1}^{q-1}\bfB_{C}(\eps)\eps^{-z}\\
        &\hspace{4em}+q^{-z}\bfw_{k}^{\top}\sum_{\emptyset\neq
          D\subseteq C^{c}}\sum_{\eps=0}^{q-1}M_{C,D}^{\eps}\bfH_{C\cup
          D}(z)\\
        &\hspace{4em}+q^{-z}\bfw_{k}^{\top}\sum_{D\subseteq
          C^{c}}\sum_{\eps=0}^{q-1}\bfdelta_{C,D}^{\eps}
        J\Big(z,\frac{\eps}{q},d-|D|-1\Big) \\
        &\hspace{4em}+\bfw_{k}^{\top}\sum_{D\subseteq
          C^{c}}\sum_{m\geq
          1}\binom{-z}{m}q^{-z-m}\sum_{\eps=0}^{q-1}M_{C,D}^{\eps}\eps^{m}\bfH_{C\cup
          D}(z+m).
      \end{aligned}
\end{equation}

As $|C\cup D|\geq 2$ or $\Re z+m>d$, all $\bfH_{C\cup D}$ used on right-hand side
of~\eqref{eq:infrecursion-again-general} are well defined for $\Re z>
d-1$. The Dirichlet series $J$ have simple poles at $z=d$ for $|C|=1$ and
$D=\emptyset$ (Lemma~\ref{lem:dirichlet-J}). Thus the right-hand
side of~\eqref{eq:infrecursion-again-general} is meromorphic for $\Re z> d-1$ with a simple
pole at $z=d$.

The factor $1-q^{d-z}\exp(\frac{2\pi ik}{p})$ has a zero exactly for
$z=d+\chi_{k}$, $k\in\mathbb Z$. Thus for $k\neq 0$, $\bfw_{k}^{\top}\bfH_{C}$
has a possible simple pole at $z=d+\chi_{k}$. Its residue is the right-hand
side of \eqref{eq:infrecursion-again-general} evaluated at $z=d+\chi_{k}$
divided by $\log q$.

If $k=0$, we have $z=d$. In this case the
expansion of the right-hand side of~\eqref{eq:infrecursion-again-general} is
\begin{equation*}
  \frac{e_{\T}}{z-d}+h_{C}+\bigOh(z-d)
\end{equation*}
with
\begin{equationaligned}
  h_{C}&=-e_{\T}\log
  q-q^{-d}\bfw_{0}^{\top}\sum_{\eps=0}^{q-1}\bfdelta_{C,\emptyset}^{\eps}\psi\Big(\frac{\eps}{q}+[\eps=0]\Big)\label{eq:res-rhs}\\
  &\quad-[d=1]\bfw_{0}^{\top}\sum_{\eps=1}^{q-1}\bfdelta_{C,\emptyset}^{\eps}\eps^{-1}\OBHnotag\\
&\quad+q^{-d}\bfw_{0}^{\top}\sum_{\eps=0}^{q-1}\bfdelta_{C,\emptyset}^{\eps}\sum_{k=0}^{d-2}\binom{d-1}{k}\Big(-\frac{\eps}{q}\Big)^{d-1-k}\zeta\Big(d-k,\frac{\eps}{q}\Big)\OBHnotag\\
&\quad+\bfw_{0}^{\top}\sum_{\eps=1}^{q-1}\bfB_{C}(\eps)\eps^{-d}+q^{-d}\bfw_{0}^{\top}\sum_{\emptyset\neq
D\subseteq C^{c}}\sum_{\eps=0}^{q-1}M_{C,D}^{\eps}H_{C\cup D}(d)\OBHnotag\\
&\quad+q^{-d}\bfw_{0}^{\top}\sum_{\emptyset\neq D\subseteq
  C^{c}}\sum_{\eps=0}^{q-1}\bfdelta_{C,D}^{\eps}J\Big(d,\frac{\eps}{q},d-|D|-1\Big)\OBHnotag\\
&\quad+\bfw_{0}^{\top}\sum_{D\subseteq
  C^{c}}\sum_{m\geq1}\binom{-d}{m}q^{-d-m}\sum_{\eps=0}^{q-1}M_{C,D}^{\eps}\eps^{m}\bfH_{C\cup
D}(d+m)\OBHnotag
\end{equationaligned}
where we used the expansion of $J$ in Lemma~\ref{lem:dirichlet-J},
$\bfdelta=\sum_{\eps=0}^{q-1}\bfdelta_{C,\emptyset}^{\eps}$ and \eqref{eq:bfw_bfdelta}.
\end{proof}
From the previous lemma and \eqref{eq:H-sum-H-C}, the residues of the
Dirichlet function $\bfH$ follow. Only $\bfH_{C}$ with $|C|=1$ contribute as
all other summands are holomorphic.
\begin{lemma}\label{lem:dirichlet-H}
  The Dirichlet function $\bfH$ is meromorphic in $\Re z>d-1$ with possible
  simple poles at $z=d+\chi_{k}$, $k\neq 0$ and a possible double pole at
  $z=d$. 

The residue at $z=d+\chi_{k}$, $k\neq 0$ is
  \begin{align*}
    &\frac1{\log
      q}\sum_{j=1}^{d}\Bigg(\sum_{\eps=1}^{q-1}\bfB_{\{j\}}(\eps)\eps^{-d-\chi_k}\\
&+q^{-d-\chi_k}\sum_{\emptyset\neq
      D\subseteq \{j\}^{c}}\sum_{\eps=0}^{q-1}M_{\{j\},D}^{\eps}\bfH_{\{j\}\cup
      D}(d+\chi_k)\\
    &+q^{-d-\chi_k}\sum_{D\subseteq
      \{j\}^{c}}\sum_{\eps=0}^{q-1}\bfdelta_{\{j\},D}^{\eps}J\Big(d+\chi_{k},\frac{\eps}{q},d-|D|-1\Big)\\
    &+\sum_{D\subseteq \{j\}^{c}}\sum_{m\geq
      1}\binom{-d-\chi_k}{m}q^{-d-m-\chi_k}\\&\qquad\qquad\qquad\qquad\cdot\sum_{\eps=0}^{q-1}M_{\{j\},D}^{\eps}\eps^{m}\bfH_{\{j\}\cup
      D}(d+m+\chi_k)\Bigg).
  \end{align*}

The main part at $z=d$ is
\begin{equation*}
   \frac{e_{\T}d}{\log q}\frac{1}{(z-d)^{2}}+\Big(\frac{e_{\T}d}{2}+\sum_{j=1}^{d}\frac{h_{\{j\}}}{\log
  q}\Big)\frac{1}{z-d}
\end{equation*}
where $h_{\{j\}}$ is defined in \eqref{eq:res-rhs}.
\end{lemma}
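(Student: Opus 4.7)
The plan is to derive the claim as a direct corollary of Lemma~\ref{lem:inf-recursion} together with the decomposition \eqref{eq:H-sum-H-C}, which expresses
\begin{equation*}
  \bfH(z)=\sum_{\emptyset\neq C\subseteq\{1,\ldots,d\}}\bfH_{C}(z).
\end{equation*}
So the proof amounts to identifying which summands can contribute singularities in the strip $\Re z>d-1$ and then adding up the local data from Lemma~\ref{lem:inf-recursion}.

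First, I would invoke the analyticity statement of Lemma~\ref{lem:inf-recursion}: $\bfH_{C}(z)$ is analytic for $\Re z>d-|C|+1$. For every $C$ with $|C|\ge 2$ this region already contains $\Re z>d-1$, so these summands are holomorphic throughout the relevant half-plane and contribute nothing to the poles of $\bfH(z)$ there. Consequently, all singularities in $\Re z>d-1$ come from the $d$ singleton terms $\bfH_{\{j\}}(z)$, $j=1,\ldots,d$.

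Next, for each singleton $C=\{j\}$, Lemma~\ref{lem:inf-recursion} asserts that (the appropriate eigen-projection of) $\bfH_{\{j\}}(z)$ can only have simple poles at $z=d+\chi_{k}$ for $k\neq 0$ and a double pole at $z=d$. This immediately yields the claimed location of the singularities of $\bfH(z)$. To obtain the residue at $z=d+\chi_{k}$ with $k\neq 0$, I would substitute $C=\{j\}$ into the formula provided by Lemma~\ref{lem:inf-recursion}, namely the right-hand side of~\eqref{eq:infrecursion-general} evaluated at $z=d+\chi_{k}$ divided by $\log q$, and sum over $j=1,\ldots,d$. The sum is precisely the vector displayed in the statement of the current lemma.

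Finally, for the double pole at $z=d$ I would add up the main parts from Lemma~\ref{lem:inf-recursion} for $C=\{j\}$: each contributes $\frac{e_{\T}}{\log q}\,(z-d)^{-2}+\bigl(\tfrac{e_{\T}}{2}+\tfrac{h_{\{j\}}}{\log q}\bigr)(z-d)^{-1}$, and summing the $d$ identical double-pole coefficients together with the $j$-dependent simple-pole coefficients yields
\begin{equation*}
  \frac{e_{\T}d}{\log q}\,\frac{1}{(z-d)^{2}}+\Bigl(\frac{e_{\T}d}{2}+\sum_{j=1}^{d}\frac{h_{\{j\}}}{\log q}\Bigr)\frac{1}{z-d},
\end{equation*}
which is exactly the asserted main part. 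The only real work is this bookkeeping of the summation over $j$; there is no genuine obstacle once Lemma~\ref{lem:inf-recursion} is in hand, since the non-singleton contributions are already known to be holomorphic in $\Re z>d-1$.
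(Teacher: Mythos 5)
Your proposal is correct and matches the paper's argument: the paper's own proof of this lemma is the single remark immediately preceding it (``From the previous lemma and \eqref{eq:H-sum-H-C}, the residues of the Dirichlet function $\bfH$ follow. Only $\bfH_{C}$ with $|C|=1$ contribute as all other summands are holomorphic.''), and your write-up is just a careful unpacking of that same bookkeeping via the decomposition $\bfH=\sum_{\emptyset\neq C}\bfH_C$, the analyticity bound $\Re z>d-|C|+1$ for $|C|\ge 2$, and the local data from Lemma~\ref{lem:inf-recursion} summed over $j=1,\ldots,d$.
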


Now we can prove the formulas for the Fourier coefficients.
\begin{proof}[Proof of Theorem~\ref{thm:fourier}]
  The periodic fluctuation \begin{math}\Psi_{1}\end{math} of the expected
  value is a \begin{math}p\end{math}-periodic function. We use the explicit
  expression of $\Psi_{1}$ given in Lemma~\ref{lemma:psi_1-explicit}.

  Due to absolute convergence, the \begin{math}k\end{math}-th Fourier
  coefficient of \begin{math}\Psi_{1}(x)\end{math} is
  \begin{align*}
    c_{k}&=\frac1p\int_{0}^{p}\Psi_{1}(x)e^{-\frac{2\pi i k}{p}x}\,dx\\
    &=-\frac{e_{\T}}{p}\int_{0}^{p}\{x\}e^{-\frac{2\pi i k}{p}
      x}\,dx-\sum_{l\in\calP}\sum_{m=0}^{\infty}q^{-dm}e^{-\frac{2\pi i
        lm}{p}}I_{l,m}\end{align*} with
  \begin{align*}
    I_{l,m}&=\frac 1p\int_{0}^{p}q^{-d\{x\}}\exp\Big(\frac{2\pi i l}{p}\lfloor
    x\rfloor-\frac{2\pi i k}{p} x\Big)f_{l}((x_{0}\ldots x_{m})_{q})\,dx
  \end{align*}
  and $q^{\{x\}}=(x_{0}\centerdot x_{1}\ldots)_{q}$.  The value of the first
  integral is given by \begin{math}-\frac{e_{\T}}{2}\end{math}
  for \begin{math}k=0\end{math}, and
  \begin{math}[k\equiv 0\bmod p]\frac{e_{\T}}{\chi_{k}\log q}\end{math}
  otherwise. Thus, we focus on the second integral $I_{l,m}$.

  First, we partition the interval $[0,p)$ into intervals $[r,r+1)$ for $r=0$,
  \dots, $p-1$. After simplifying the sum of $p$-th roots of unity, we
  obtain
  \begin{equation*}
    I_{l,m}= [k\equiv l\bmod p]\int_{0}^{1}q^{-dx}f_{l}((x_{0}\ldots
    x_{m})_{q})e^{-\frac{2\pi i k}{p} x}\,dx.
  \end{equation*}

  After partitioning the interval \begin{math}[0,1)\end{math} into the intervals
  \begin{math}[\log_{q}r-m,\allowbreak\log_{q}(r+1)-m)\end{math}
  for \begin{math}r=q^{m}\end{math}, \dots, \begin{math}q^{m+1}-1\end{math},
  the function \begin{math}f_{l}((x_{0}\ldots x_{m})_{q})\end{math} is
  constant on the interval of integration. Therefore, we obtain

\begin{multline*}
  \sum_{l\in\calP}\sum_{m=0}^{\infty}q^{-md}e^{-\frac{2\pi i l
      m}{p}}I_{l,m}=\\\frac{1}{(d+\chi_{k})\log
    q}\sum_{r=1}^{\infty}f_{k\bmod
    p}\left(r\right)\left(r^{-d-\chi_{k}}-(r+1)^{-d-\chi_{k}}\right).
\end{multline*}

Next, consider the function
\begin{equation*}
  A(z)=\sum_{r=1}^{\infty}f_{k\bmod p}\left(r\right)\left(r^{-z}-(r+1)^{-z}\right).
\end{equation*}
We know that $f_{l}(r)=\bigOh(r^{d-1}\log r)$. Thus, $A(z)$ is analytic for $\Re z>d-1$.

By summation by parts, we can rearrange the series for \begin{math}\Re
  z>d\end{math} and obtain a sum of Dirichlet series
\begin{equation}\label{eq:dirichlet-A-sum}
  A(z)=[p\mid k]e_{\T}S_{1}(z)+i\bfw_{k}'^{\top}\bfones S_{2}(z)-S_{3}(z)+q^{d}\exp\Big(\frac{2\pi ik}{p}\Big)S_{4}(z)
\end{equation} 
with
coefficients $s_{1}(r)$, $s_{2}(r)$, $s_{3}(r)$ and $s_{4}(r)$ respectively. These coefficients
are differences of the four summands in $f_{k\bmod p}(r)$ and $f_{k\bmod
  p}(r-1)$ in~\eqref{eq:coeff-f}, respectively, e.g., 
\begin{align*}
  s_{1}(r)&=\floor{\log_{q}(r)}(r^{d}-(q\floor{r/q})^{d})+(q\floor{r/q})^{d}\\
&\quad-[r>1]\big(\floor{\log_{q}(r-1)}((r-1)^{d}-(q\floor{(r-1)/q})^{d})\\
&\hspace{8em}-(q\floor{(r-1)/q})^{d}\big).
\end{align*}

After some simplifications using
$\floor{\frac{r-1}q}=\floor{\frac rq}-[q\mid r]$ and
 $\floor{\log_{q}(r-1)}=\floor{\log_{q}r}-[r\text{ is a power of }q]$ (for $r\geq2$),
 we obtain
\begin{equation}
  \label{eq:summand-dirichlet-A}
  \begin{aligned}
    s_{1}(r)&=\floor{\log_{q}r}(r^{d}-(r-1)^{d})\\
    &\quad -[q\mid r]q^{d}\floor{\log_{q}rq^{-1}}((rq^{-1})^{d}-(rq^{-1}-1)^{d})\\
    &\quad +[r\neq 1 \text{ is a power of }q]((r-1)^{d}-(r-q)^{d}),\\
    s_{2}(r)&=r^{d}-(r-1)^{d}-[q\mid r]q^{d}\exp\Big(\frac{2\pi ik}{p}\Big)((rq^{-1})^{d}-(rq^{-1}-1)^{d}),\\
    s_{3}(r)&=\bfw_{k}^{\top}(\bfB_{\emptyset}(r)-\bfB_{\emptyset}(r-1)),\\
    s_{4}(r)&=[q\mid
    r]\bfw_{k}^{\top}(\bfB_{\emptyset}(rq^{-1})-\bfB_{\emptyset}(rq^{-1}-1)).
  \end{aligned}
\end{equation}

For \begin{math}\Re z>d\end{math}, we can split up the summation into the
different cases in~\eqref{eq:summand-dirichlet-A}. This yields
\begin{align*}
  S_{1}(z)&=(1-q^{d-z})L(z)+\sum_{j=0}^{d-1}\binom{d}{j}(-1)^{d-j}\frac{1-q^{d-j}}{q^{z-j}-1},\\
  S_{2}(z)&=\Big(1-q^{d-z}\exp\Big(\frac{2\pi ik}{p}\Big)\Big)Z(z),\\
  S_{3}(z)&=\bfw_{k}^{\top}\bfH(z)-B(z),\\
  S_{4}(z)&=q^{-z}\bfw_{k}^{\top}\bfH(z)-q^{-z}B(z)
\end{align*}
where we used~\eqref{eq:B-sum-of-Bs}, \eqref{eq:H-sum-H-C}  and the Dirichlet series defined
in Lemmas~\ref{lem:dirichlet-L}, \ref{lem:dirichlet-Z} and \ref{lem:dirichlet-B}.

Thus, in~\eqref{eq:dirichlet-A-sum}, we obtain
\begin{equation}\label{eq:dirichletseries-general}
  \begin{aligned}
    A(z)&=[p\mid
    k]e_{\T}\sum_{j=0}^{d-1}\binom{d}{j}(-1)^{d-j}\frac{1-q^{d-j}}{q^{z-j}-1}\\
    &\quad+i\bfw_{k}'^{\top}\bfones
    \big(1-q^{d-z}e^{\frac{2\pi ik}{p}}\big)Z(z)\\
    &\quad-\big(1-q^{d-z}e^{\frac{2\pi i
        k}{p}}\big)\bfw_{k}^{\top}\bfH(z)\\
    &\quad+[p\mid
    k]e_{\T}(1-q^{d-z})L(z)\\
    &\quad+\big(1-q^{d-z}e^{\frac{2\pi ik}{p}}\big)B(z).
  \end{aligned}
\end{equation}

We want to evaluate $A$ at $z=d+\chi_{k}$. The factors $1-q^{d-z}e^{\frac{2\pi ik}{p}}$ are zero if and only if
$z=d+\chi_{k}$. Thus, the following  Dirichlet series contribute to~\eqref{eq:dirichletseries-general}:
\begin{itemize}
\item  The Dirichlet series $Z$ only contributes if $k=0$ (Lemma~\ref{lem:dirichlet-Z}).

\item  The Dirichlet series $\bfw_{k}^{\top}\bfH$ has poles at
  $z=d+\chi_{k}$ for $k\in\mathbb Z$  (Lemma~\ref{lem:dirichlet-H}). The possible
  double pole at $z=d$ cancels with the one of $L$. 

\item The residue of the Dirichlet series $L$ contributes to the Fourier
  coefficients (Lemma~\ref{lem:dirichlet-L}). The possible double pole at $z=d$ cancels with that of $\bfw_{0}^{\top}\bfH$.

\item As the Dirichlet series \begin{math}B\end{math} converges
  for \begin{math}\Re z>d-1\end{math} (Lemma~\ref{lem:dirichlet-B}), it does not contribute to the Fourier
  coefficients.
\end{itemize}

As the second order poles of $\bfw_{0}^{\top}\bfH$ and $L$ cancel, the right-hand side of~\eqref{eq:dirichletseries-general} is well
defined for the limit \begin{math}z\rightarrow d+\chi_{k}\end{math}. After computing the limit
and simplifying the
summation, we obtain~\eqref{eq:gen-fourier-coeff}.

Then Lemma~\ref{lem:hoelder} and Bernstein's theorem (cf.~\cite[p.~240]{Zygmund:2002:trigon}) imply the absolute and uniform
convergence of the Fourier series.
\end{proof}

Now we use Theorem~\ref{thm:fourier} to prove Corollary~\ref{cor:delange}.
\begin{proof}[Proof of Corollary~\ref{cor:delange}]
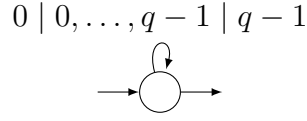
\begin{figure}
    \centering
    \begin{tikzpicture}[auto, initial text=, >=latex, accepting text=,
      accepting/.style=accepting by arrow, every state/.style={minimum
    size=1.3em}]
      \node[state, initial, accepting] (v0) at (3.000000, 0.000000) {};
      \path[->] (v0) edge[loop above] node {$0\mid 0, \ldots, q-1\mid q-1$} ();
    \end{tikzpicture}
    \caption{Transducer for the $q$-ary sum-of-digits function.}
    \label{fig:q-ary-trans}
  \end{figure}
The transducer in Figure~\ref{fig:q-ary-trans} computes the $q$-ary sum-of-digits function
$s_{q}(n)$ and we can use Theorem~\ref{thm:fourier}.

  We transform the Dirichlet series
  \begin{equation*}
    D(z)=\sum_{m\geq 1}(s_{q}(m)-s_{q}(m-1))m^{-z}
  \end{equation*}
in two different ways. This series is absolutely convergent for $\Re z>1$.

First,  we can rearrange the summation of the Dirichlet series $D(z)$ such
that the Dirichlet series $H(z)=\sum_{m\geq1}s_{q}(m)m^{-z}$, defined
in~\eqref{eq:dirichlet-H}, appears. We have
\begin{equation}
  \begin{aligned}
    \lvert H(z)-1\rvert &= \bigOh\Big(2^{-\Re z}+\sum_{m\geq 3} m^{-\Re z}\log
    m\Big)\\
    &= \bigOh\Big(2^{-\Re z}+\int_{2}^{\infty}x^{-\Re z}\log x \,
    dx\Big)\\
    &=\bigOh(2^{-\Re z})
  \end{aligned}
\label{eq:estimate-H-delange}
\end{equation}
for $\Re z> 1$.
By partial summation, we obtain
\begin{align*}
  D(z)&=1-2^{-z}+\sum_{m\geq2}s_{q}(m)(m^{-z}-(m+1)^{-z})\\
  &=1-2^{-z}+\sum_{m\geq 2}s_{q}(m)m^{-z}\big(1-\big(1+m^{-1}\big)^{-z}\big).\\
\end{align*}
Expanding the binomial series yields
\begin{equation}
\begin{aligned}
D(z)&=1-2^{-z}-\sum_{m\geq 2}s_{q}(m)m^{-z}\sum_{l\geq 1}\binom{-z}{l}m^{-l}\\
&=1-2^{-z}-\sum_{l\geq 1}\binom{-z}{l}(H(z+l)-1).
\end{aligned}\label{eq:D-infinite-sum}
\end{equation}

By~\eqref{eq:D-infinite-sum}, we have
\begin{equation*}
  D(z)=1-2^{-z}+zH(z+1)-z-\sum_{l\geq2}\binom{-z}{l}(H(z+l)-1)
\end{equation*}
which is equivalent to
\begin{equation*}
  H(z+1)=\frac 1z D(z)+\frac 1z(2^{-z}-1)+1-\sum_{l\geq2}\frac 1l\binom{-z-1}{l-1}(H(z+l)-1)
\end{equation*}
for $\Re z>1$.
The sum on the right-hand side is holomorphic at $\Re z=0$ because
of~\eqref{eq:estimate-H-delange}. By meromorphic continuation, this equation
also holds for $\Re z= 0$. This yields
\begin{equation}\label{eq:res-H-delange}
  \Res_{z=1+\chi_{k}}H(z)=\Res_{z=\chi_{k}}H(z+1)=\Res_{z=\chi_{k}}\frac 1zD(z).
\end{equation}

On the other hand, we split up the summation in the definition of $D(z)$ into the $q$ equivalence classes modulo $q$
and we use the recursions\footnote{Actually, these recursions are \eqref{eq:recursion-b}.}
\begin{align*}
  s_{q}(qm+\eps)&=s_{q}(m)+\eps
\end{align*}
for $0\leq\eps<q$. This results in
\begin{equation*}
  s_q(m)-s_q(m-1)=1 + [q\mid m]\left(s_q\left(q^{-1}m\right)-s_q\left(q^{-1}m - 1\right) - q\right)
\end{equation*}
for $m \ge 1$. Thus we obtain
\begin{align*}
  D(z)&=\sum_{m\ge 1}\left(1 + [q\mid m]\left(s_q\left(q^{-1}m\right)-s_q\left(q^{-1}m - 1\right) - q\right)\right)m^{-z}\\
  &=\zeta(z)+q^{-z}D(z)-q^{1-z}\zeta(z).
\end{align*}
Thus, we obtain\footnote{Note that this well-known identity can also be derived from $s_q(m)-s_{q}(m-1)=1-(q-1)v_q(m)$, where $v_{q}(m)$
is the $q$-adic valuation of $m$.} 
\begin{equation}\label{eq:dirichlet-D}
  D(z)=\frac{1-q^{1-z}}{1-q^{-z}}\zeta(z).
\end{equation}

This formula yields
\begin{equation}\label{eq:res-D-delange}
  \Res_{z=\chi_{k}}D(z)=-\frac{q-1}{\log q}\zeta(\chi_{k}).
\end{equation}
For $k=0$, we further use the expansion
\begin{equation*}
  \zeta(z)=-\frac 12-\frac 12\log (2\pi)z+\bigOh(z^{2})
\end{equation*}
(cf.\ \cite[\href{http://dlmf.nist.gov/25.6.E1}{25.6.1} and \href{http://dlmf.nist.gov/25.6.E11}{25.6.11}]{NIST:DLMF})
and~\eqref{eq:dirichlet-D} to obtain
\begin{equation}\label{eq:D-exp-delange}
  D(z)=\frac{q-1}{2 z \log{q}}+\frac{(q-1) \log{(2 \pi)}}{2 \log{q}}-\frac{q+1}{4}+\bigOh(z).
\end{equation}

Thus, by \eqref{eq:estimate-H-delange} and \eqref{eq:res-D-delange}, we obtain
\begin{equation*}
 \Res_{z=1+\chi_{k}}H(z)=\frac{1}{\chi_{k}}\Res_{z=\chi_{k}}D(z)=-\frac{q-1}{\chi_{k}\log q}\zeta(\chi_{k})
\end{equation*}
for $k\neq 0$.
For $k=0$, \eqref{eq:D-exp-delange} and \eqref{eq:res-H-delange} yield
\begin{equation*}
  \Res_{z=1}H(z)=\frac{(q-1) \log{(2 \pi)}}{2 \log{q}}-\frac{q+1}{4}.
\end{equation*}

Now,~\eqref{eq:gen-fourier-coeff} with $e_{\T}=\frac{q-1}{2}$ and
$\bfw_{0}'^{\top}=0$ yields~\eqref{eq:delange-fou-coeff}.
\end{proof}

\section{Non-Differentiability --- Proof of
  Theorem~\ref{thm:nondiff}}\label{appendix:non-differentiability}
In this section, we give the proof of the non-differentiability of
$\Psi_{1}(x)$. We follow the method presented by
Tenenbaum~\cite{Tenenbaum:1997:non-derivabilite}, see also Grabner and
Thuswaldner~\cite{Grabner-Thuswaldner:2000:sum-of-digits-negative}.

\begin{proof}[Proof of Theorem \ref{thm:nondiff}]
  Let \begin{math}r=(r_{m-1}\ldots r_{0})_{q}\end{math} be the value of the
  reset sequence \begin{math}(r_{m-1}\ldots r_{0})\end{math} leading to
  state \begin{math}\nu\end{math}.

  Assume that \begin{math}\Psi_{1}\end{math} is differentiable at
  \begin{math}x\in[0,1)\end{math}. Let
  \begin{math}q^{x}=(\eps_{0}\centerdot\eps_{1}\ldots)_{q}\end{math} be the
  standard
  \begin{math}q\end{math}-ary digit expansion choosing the representation
  ending on \begin{math}0^{\omega}\end{math} in the case of
  ambiguity. Further, let \begin{math}x_{k}\end{math} be such that
  \begin{math}q^{x_{k}}=(\eps_{0}\centerdot\eps_{1}\ldots\eps_{k})_{q}\end{math}. Thus,
  we have \begin{math}\lim_{k\rightarrow\infty}x_{k}=x\end{math}. For
  \begin{math}f\in\{0,1\}\end{math}, the
  function \begin{math}L_{f}\colon\integers\rightarrow\integers\end{math} is
  defined as \begin{math}L_{f}(k)=ck+f\end{math} with
  \begin{math}c\end{math} a positive integer such
  that \begin{math}c>\frac1{\xi}-1\end{math}. Define \begin{math}N_{k}=q^{x_{k}+k+L_{f}(k)}\end{math} and
  \begin{math}h(k)=\lfloor q^{ck+\frac
      c{c+1}x_{k}-m-2}\rfloor\end{math}. Let \begin{math}y_{k}\end{math}
  and \begin{math}z_{k}\end{math} be such
  that \begin{math}N_{k}+q^{ck-m-1}r=q^{y_{k}+k+L_{f}(k)}\end{math} and
  \begin{math}N_{k}+q^{ck-m-1}r+h(k)=q^{z_{k}+k+L_{f}(k)}\end{math}.

  From these definitions, we know that
  \begin{align*}
    \frac{h(k)}{N_{k}}&=\Theta(q^{-k}),\\
    N_{k}^{1-\xi}\log N_{k}&=o(h(k))
  \end{align*}
  for \begin{math}k\rightarrow\infty\end{math}. Apart from
  \begin{math}x_k\end{math}, also, \begin{math}y_{k}\end{math}
  and \begin{math}z_{k}\end{math} converge to \begin{math}x\end{math} and
  satisfy the following bounds:
  \begin{align*}
    z_{k}-y_{k}&=\frac1{\log
      q}\frac{h(k)}{N_{k}}+\bigOh\left(\frac{h(k)^{2}}{N_{k}^{2}}\right),\\
    |y_{k}-x_{k}|&=\bigOh(q^{-k}),\\
    x-x_{k}&=\bigOh(q^{-k}).
  \end{align*}

  Now, we compute
  \begin{equation}\label{eq:3}\frac1{h(k)}\sum_{n\in\mathcal
      N_{k}}\T(n)\end{equation}
  in two different ways where \begin{math}\mathcal N_{k}=\{n\in\integers\mid N_{k}+q^{ck-m-1}r\leq n<N_{k}+q^{ck-m-1}r+h(k)\}\end{math}.

  First, observe that \begin{math}q^{ck-1}\mid N_{k}\end{math}
  and \begin{math}h(k)<q^{ck-m-1}\end{math}. Thus, the digit representations
  of the three summands in \begin{math}N_{k}+q^{ck-m-1}r+n\end{math} are not
  overlapping at non-zero digits for \begin{math}n<h(k)\end{math}. Since the
  digit expansion of \begin{math}r\end{math} is a reset sequence, we have
  \begin{equation*}
    \T(N_{k}+q^{ck-m-1}r+n)=\bfe_{\nu}^{\top}\bfb(N_{k}q^{-ck+1})+\T(q^{ck-m-1}r+n)-b(\nu)\end{equation*}
  where \begin{math}\bfe_{\nu}^{\top}\bfb(N)\end{math} is the output of the transducer when starting in state
  \begin{math}\nu\end{math} with input \begin{math}N\end{math}
  and \begin{math}b(\nu)\end{math} is the final output at
  state \begin{math}\nu\end{math}.

  Thus, we have
  \begin{align*}\frac1{h(k)}\sum_{n\in\mathcal
      N_{k}}\T(n)&=\frac1{h(k)}\sum_{0\leq
      n<h(k)}\T(N_{k}+q^{ck-m-1}r+n)\\
    &=\bfe_{\nu}^{\top}\bfb(N_{k}q^{-ck+1})-b(\nu)+\frac1{h(k)}\sum_{n<h(k)}\T(q^{ck-m-1}r+n)
  \end{align*}
  where only the first summand depends on \begin{math}L_{f}(k)\end{math} and
  hence on \begin{math}f\end{math}.

  Taking the difference in \eqref{eq:expected}, there is a second way of computing the sum
  in~(\ref{eq:3}).  Using the periodicity and continuity of
  \begin{math}\Psi_{1}(x)\end{math} yields
  \begin{equation}\label{eq:4}
    \begin{aligned}
      \sum_{n\in\mathcal N_{k}}\T(n)
      &=(N_{k}+q^{ck-m-1}r)e_{\T}(z_{k}-y_{k})+h(k)e_{\T}(x+k+L_{f}(k))\\
      &\quad+(N_{k}+q^{ck-m-1}r)(\Psi_{1}(z_{k})-\Psi_{1}(y_{k}))\\
      &\quad+h(k)\Psi(x)+o(h(k)).
    \end{aligned}
  \end{equation}

  Next, we use our assumption that \begin{math}\Psi_{1}\end{math} is
  differentiable at \begin{math}x\end{math} to replace the difference by the
  derivative
  \begin{equation*}
    \Psi_{1}(z_{k})-\Psi_{1}(y_{k})=\Psi_{1}'(x)(z_{k}-y_{k})+o(|z_{k}-x|)+o(|x-y_{k}|).\end{equation*}

  Now, we insert this into~\eqref{eq:4}, divide by \begin{math}h(k)\end{math}
  and obtain
  \begin{equation*}
    \frac1{h(k)}\sum_{n\in\mathcal N_{k}}\T(n)=\frac
    {e_{\T}}{\log q}+e_{\T}(x+k+L_{f}(k))+\frac 1{\log q}\Psi_{1}'(x)+\Psi_{1}(x)+o(1).
  \end{equation*}

  Thus, we have the following equality
  \begin{multline*}
    \bfe_{\nu}^{\top}\bfb(N_{k}q^{-ck+1})-b(\nu)+\frac1{h(k)}\sum_{n<h(k)}\T(q^{ck-m-1}r+n)=\\\frac
    {e_{\T}}{\log q}+e_{\T}(x+k+L_{f}(k))+\frac 1{\log
      q}\Psi_{1}'(x)+\Psi_{1}(x)+o(1)
  \end{multline*}
  twice, for \begin{math}f\in\{0,1\}\end{math}. Subtracting these two from
  each other yields
  \begin{equation*}
    \bfe_{\nu}^{\top}\bfb(q^{x_{k}+k+2})-\bfe_{\nu}^{\top}\bfb(q^{x_{k}+k+1})=e_{\T}+o(1).\end{equation*}
  Since the left-hand side is an integer, but the right-hand side is not for \begin{math}k\end{math}
  large enough, this contradicts our assumption that \begin{math}\Psi_{1}\end{math} is
  differentiable at \begin{math}x\end{math}.
\end{proof}
\section{Recursions --- Proof of Theorem~\ref{thm:recursion}}
\label{sec:recursion-proof}
In this section, we construct a transducer associated to the sequence defined by
the recursion in~\eqref{eq:recursions-d}. All inequalities, maxima and minima
in this section are considered coordinate-wise.

Define the function  $A\colon\mathbb N_0^{d}\to\mathbb N_{0}^{d}\cup\{\infty\}$ by
\begin{equation*}
A(q^{\kappa}\bfn+\bflambda)=
\begin{cases}
  q^{\kappa_{\bflambda}}\bfn+\bfr_{\bflambda}&\text{if
  }q^{\kappa_{\bflambda}}\bfn+\bfr_{\bflambda}\geq 0,\\
\infty&\text{else}
\end{cases}
\end{equation*}
for $0\leq \bflambda<q^{\kappa}\bfones$ and $\bfn\ge 0$.
So, if $A(\bfn)<\infty$, then the recursion
\eqref{eq:recursions-d} can be used for this argument because the argument on the right-hand
side is non-negative, i.e., $a(\bfn)=a(A(\bfn))+ t_{\bfn \bmod q^{\kappa}}$.

First, we construct a non-deterministic transducer $\Ttilde$. A priori, it has an
infinite number of states; later, we will prove that only finitely many of them
are accessible. We then simplify it to obtain a finite, deterministic,
subsequential, complete transducer $\T$.

The set of states of $\Ttilde$ is
\begin{equation*}
  \{ (\bfl, j)_F \mid \bfl\in\integers^d, j\in \naturals_0\} \cup
  \{ (\bfl, j)_N \mid \bfl\in\integers^d, j\in \naturals_0\}.
\end{equation*}
The initial state is $(0, 0)_F$; all states $(\bfl, j)_F$ are final states with
final output $a(\bfl)$ if $\bfl\ge 0$ and final output $0$
otherwise\footnote{In fact, we will prove that a path with valid input will
  never end in a state $(\bfl, j)_F$ with $\bfl\not\ge 0$, but the framework of
  subsequential transducers requires us to specify a final output even in that
  case. The non-final states $(\bfl, j)_N$ will disappear in the reduction to
  $\T$ anyway.}.
As an abbreviation, we will frequently speak about ``a state $(\bfl, j)$'' if
we do not want to distinguish between $(\bfl, j)_F$ and $(\bfl, j)_N$. We
call $\bfl$ the \emph{carry} and $j$ the \emph{level} of the state $(\bfl, j)$.
A state $(\bfl, j)_F$ is called \emph{simple}, if it is final, $\bfl\ge 0$ and
$j\le \kappa$.

There are two types of transitions in $\Ttilde$, \emph{recursion transitions}
and \emph{storing transitions}. Each state is either the origin of one recursion
transition or of $q^d$ storing transitions.

There is a recursion transition leaving $(\bfl,
j)$ if 
\begin{itemize}
\item $j\ge \kappa$ and
\item $A(q^j\bfn + \bfl)<\infty$ for all $\bfn\ge 0$ with $\bfn\neq 0$.
\end{itemize}
In that case, we write  $\bfl=q^{\kappa}\bfs+\bflambda$ for a $0\leq
\bflambda<q^{\kappa}\bfones$ and the transition leads to the state $(\bfl', j')_N$
with $j'=\kappa_{\bflambda}+j-\kappa$ and
$\bfl'=q^{\kappa_{\bflambda}}\bfs+\bfr_{\bflambda}$. The input label is empty,
the output label is $t_{\bflambda}$. Thus
\begin{equation}\label{eq:recursion-transition-simple-invariant}
  A(q^j\bfn + \bfl) = q^{j'}\bfn + \bfl'
\end{equation}
for $\bfn\ge 0$ with $\bfn\neq 0$. Note that 
\eqref{eq:recursion-transition-simple-invariant} holds for $\bfn=0$ if and only
if $\bfl\ge 0$ and $\bfl'\ge 0$.

Otherwise, there are storing transitions from $(\bfl, j)$ to $(q^j\bfeps+\bfl,
j+1)_F$ with input $\bfeps$ and output $0$ for all $0\le \bfeps<q \bfones$.

We now define the classes $F_1$, \ldots, $F_K$
announced in Section~\ref{sec:rec}. For each accessible cycle in $\Ttilde$ with simple
states and input $0$, the carries of its states form one of these classes. The
other classes are the singletons of those carries $\bfl\ge 0$ in the accessible part
of $\Ttilde$ with $A(\bfl)=\infty$. These sets will turn out to be disjoint by Lemma~\ref{lemma:transducer-bijection}
and the finiteness of $K$ will follow from the
finiteness of the accessible part of $\Ttilde$ (Lemma~\ref{lemma:transducer-is-finite}).

\begin{remark}\label{remark:combinatorial-characterization-outdegree-zero-recursion-digraph}We
  also give a combinatorial description of those classes $F_{1}$, \dots,
  $F_{K}$ which do not come from cycles in $\Ttilde$:
  Let $\bfl\ge 0$ be a carry of an accessible state of $\Ttilde$. Then
  $A(\bfl)=\infty$ if and only if there is a recursion transition from some
  $(\bfl, j)$ to some $(\bfl', j')$ with $\bfl'\not\ge 0$.
\end{remark}
\begin{proof}
  Let $(\bfl, j_0)$ be any accessible state with carry $\bfl$. We use the
  longest path with input $0$ using storing transitions only to arrive in some
  state $(\bfl, j)$---again, finiteness of this process will follow from the
  finiteness of the accessible part and the fact that the levels increase
  along storing transitions. As there is no storing transition leaving $(\bfl, j)$ by
  construction, there is a recursion transition from $(\bfl, j)$ to some
  $(\bfl', j')$. By the remark following
  \eqref{eq:recursion-transition-simple-invariant}, $\bfl'=A(\bfl)$ or
  $\bfl'\not\ge 0$.
\end{proof}

As usual, if reaching a state which is the origin of a transition with empty
input, the process may stay in that state or may continue to the destination
state writing the output of the transition without reading an input. This is
the reason why the transducer is non-deterministic.

Note that in our case, transitions with empty input (i.e., recursion
transitions) lead to non-final states and transitions with non-empty input
(i.e., storing transitions) lead to final states. Combined with the fact that
each state is either the origin of one recursion transition or of $q^d$ storing
transitions, processing an input is in fact deterministic: For every admissible
input---we do not allow leading zeros---, there exists exactly one path leading
from the initial state to a final state with the given input. This will enable
us to simplify the transducer $\Ttilde$ to a deterministic transducer $\T$
later on.

We need the property that the carries of accessible states are not ``too negative'':
\refstepcounter{lemma}\label{lemma:transducer-non-negativity}\addtocounter{lemma}{-1}

\begin{lemma}
  \begin{enumerate}
  \item If $(\bfl, j)$ is an accessible state, then
    \begin{equation}\label{eq:transducer-non-negativity-condition}
      q^j \bfn + \bfl\ge 0
    \end{equation}
    holds for all $\bfn\ge 0$ with $\bfn\neq 0$.
  \item If $d\ge 2$ and $(\bfl, j)$ is an accessible state, then
    \begin{equation*}
      \bfl\ge 0.
    \end{equation*}
  \item Any accessible transition with input $\bfeps\neq 0$ leads
    to a state $(\bfl,j)$ with $\bfl\geq 0$.
  \item If $d=1$ and $(l, j)$ is an accessible state, then
    \begin{equation*}
      l\ge \lmin=\min_\lambda\Biggl\{0, \frac{-1+\frac{r_\lambda}{q^{\kappa_\lambda}}}{\frac{1}{q^{\kappa_\lambda}}-\frac1{q^\kappa}}\Biggr\}.
    \end{equation*}
  \end{enumerate}
\end{lemma}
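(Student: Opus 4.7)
The plan is to establish (i) and (iv) by induction on the length of an accessible path from the initial state, and to derive (ii) and (iii) as immediate specializations of (i).

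For (i), the base case is the initial state $(0,0)_F$, where $q^0\bfn+0=\bfn\ge 0$. For the inductive step, suppose an accessible state $(\bfl,j)$ satisfies~\eqref{eq:transducer-non-negativity-condition}. A storing transition with input $\bfeps$ produces the state $(q^j\bfeps+\bfl,\,j+1)$; for any $\bfm\ge 0$ with $\bfm\neq 0$ we have $q^{j+1}\bfm+q^j\bfeps+\bfl = q^j(q\bfm+\bfeps)+\bfl\ge 0$ by the induction hypothesis applied at $(\bfl,j)$ with $\bfn=q\bfm+\bfeps$, which is non-zero since $\bfm\neq 0$. For a recursion transition to $(\bfl',j')$, the very condition for its existence gives $q^{j'}\bfn+\bfl' = A(q^j\bfn+\bfl)<\infty$, hence $\ge 0$, for every $\bfn\ge 0$ with $\bfn\neq 0$.

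Statement (ii) follows directly from (i): for any coordinate $k$ choose some $i\neq k$ (possible since $d\ge 2$) and set $\bfn=\bfe_i$ in~\eqref{eq:transducer-non-negativity-condition}; the $k$-th coordinate of $q^j\bfe_i+\bfl$ is just $l_k$, which must therefore be non-negative. Statement (iii) concerns the target of a storing transition with $\bfeps\neq 0$ (recursion transitions carry empty input), so it is immediate by applying (i) to the source state $(\bfl,j)$ with $\bfn=\bfeps$: the carry $q^j\bfeps+\bfl$ of the target is non-negative.

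The main obstacle is (iv), where the arithmetic built into $\lmin$ must be matched carefully against the recursion step. I induct on path length, with base case $l=0\ge\lmin$ (guaranteed because $0$ is one of the terms in the min defining $\lmin$). A storing transition only adds the non-negative quantity $q^j\eps$, hence preserves $l\ge\lmin$. The key case is a recursion transition from $(l,j)$ to $(l',j')$ with $l=q^\kappa s+\lambda$ and
\begin{equation*}
l' = q^{\kappa_\lambda-\kappa}(l-\lambda)+r_\lambda.
\end{equation*}
A direct expansion yields
\begin{equation*}
l'-\lmin \;=\; q^{\kappa_\lambda-\kappa}(l-\lmin) \;+\; \bigl[(q^{\kappa_\lambda-\kappa}-1)\lmin + r_\lambda - q^{\kappa_\lambda-\kappa}\lambda\bigr].
\end{equation*}
The first summand is non-negative by the induction hypothesis. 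For the bracketed term, the defining inequality $\lmin \le (r_\lambda-q^{\kappa_\lambda})/(1-q^{\kappa_\lambda-\kappa})$ rearranges, using $1-q^{\kappa_\lambda-\kappa}>0$, to $(q^{\kappa_\lambda-\kappa}-1)\lmin + r_\lambda \ge q^{\kappa_\lambda}$; combined with $\lambda<q^\kappa$ this gives
\begin{equation*}
(q^{\kappa_\lambda-\kappa}-1)\lmin + r_\lambda - q^{\kappa_\lambda-\kappa}\lambda \;\ge\; q^{\kappa_\lambda}-q^{\kappa_\lambda-\kappa}\lambda \;=\; q^{\kappa_\lambda-\kappa}(q^\kappa-\lambda) \;>\; 0,
\end{equation*}
so $l'>\lmin$, closing the induction.
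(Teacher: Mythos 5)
Your proof is correct, and for parts (i), (iii), and (iv) it tracks the paper's argument (induction on path length; for (iv) the algebra is rearranged into an exact decomposition rather than the paper's chain using $\lfloor l/q^\kappa\rfloor\ge l/q^\kappa-1$, but the two are equivalent). The one genuine difference is your treatment of (ii): the paper runs a separate induction that invokes the standing hypothesis $\bfr_{\bflambda}\ge 0$ at the recursion-transition step, whereas you observe that for $d\ge2$ one can set $\bfn=\bfe_i$ with $i\neq k$ in (i) to read off $l_k\ge0$ directly. This is a cleaner route and shows that (ii) does not actually require $\bfr_{\bflambda}\ge 0$ — that hypothesis is still needed in the paper for finiteness of the transducer (Lemma~\ref{lemma:transducer-is-finite}), but not for this assertion. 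Both approaches are valid; yours makes the logical dependencies slightly more transparent.
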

\begin{proof}The first assertion is easily shown by induction and \eqref{eq:recursion-transition-simple-invariant}. The second
  assertion follows by induction and from the assumption that $r_{\bflambda}\ge
  0$ holds for all $\bflambda$. To prove the third assertion, we use \eqref{eq:transducer-non-negativity-condition}
  on the originating state of the transition.

  The last assertion is shown by induction. It is clearly valid in the initial state.
  For storing transitions, the value of $l$ is non-decreasing. If there is a recursion transition from some $(l, j)$ to some $(l', j')_N$, we have
  \begin{align*}
    l'&=q^{\kappa_\lambda}\left\lfloor\frac{l}{q^\kappa}\right\rfloor+r_\lambda \ge
    q^{\kappa_\lambda}\left(\frac{l}{q^{\kappa}}-1+\frac{r_\lambda}{q^{\kappa_\lambda}}\right)\\
    &\ge q^{\kappa_\lambda}\left(\frac{\lmin}{q^{\kappa}} + \lmin\left(\frac1{q^{\kappa_\lambda}}-\frac1{q^\kappa}\right)\right)=\lmin.
  \end{align*}
\end{proof}

As leading zeros are not allowed, the last transition in the computation path
of any valid input has input $\bfeps\neq0$ and thus leads to a state with a
non-negative carry.

For our further investigations and finally the correctness proof, we need a suitable
invariant:

\begin{lemma}
  Consider a path from $(\bfl,j)$ to $(\bfl',j')$ with input label
  $\bfeps_{m-1}\ldots\bfeps_{0}$, output label $\delta_{m'-1}\ldots\delta_0$ using
  $L$ recursion transitions and $\bfn\ge 0$. Thus $m'$ is the number of
  transitions and $m=m'-L$ is the number of storing transitions.

  If $\bfn\neq 0$ or if the last
  transition is a storing transition with non-zero input $\bfeps_{m-1}$, then
  \begin{align}
    A^L(q^{j}(q^m\bfn + (\bfeps_{m-1}\ldots\bfeps_{0})_q )+\bfl)&=
    q^{j'}\bfn + \bfl',\label{eq:transducer-invariant-A}\\
\intertext{and, if the recursion \eqref{eq:recursions-d} is well-posed,}
    a(q^{j}(q^m\bfn + (\bfeps_{m-1}\ldots\bfeps_{0})_q )+\bfl)&=
    a(q^{j'}\bfn + \bfl') + \sum_{k=0}^{m'-1}\delta_k\label{eq:transducer-invariant-a}.
  \end{align}
\end{lemma}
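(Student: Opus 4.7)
The plan is to prove both identities simultaneously by induction on the total number $m'$ of transitions in the path. For the base case $m'=0$, the path is empty, so $(\bfl,j)=(\bfl',j')$, $m=L=0$, the input and output words are empty, and both equations reduce to trivial identities. The hypothesis of the lemma forces $\bfn\neq 0$ (there is no last transition), so Lemma~\ref{lemma:transducer-non-negativity} guarantees that $q^j\bfn+\bfl$ is a legitimate argument for $a$.

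For the inductive step, let the last transition go from some state $(\bfl'',j'')$ to $(\bfl',j')$. If it is a storing transition, then by its definition $\bfl'=q^{j''}\bfeps_{m-1}+\bfl''$, $j'=j''+1$, and $\delta_{m'-1}=0$. I apply the inductive hypothesis to the shorter path from $(\bfl,j)$ to $(\bfl'',j'')$, which uses the same number $L$ of recursion transitions, choosing $\bfn':=q\bfn+\bfeps_{m-1}$ in place of $\bfn$. This $\bfn'$ is nonzero because either $\bfeps_{m-1}\neq 0$ (the new last transition is storing with nonzero input) or $\bfn\neq 0$, so the hypothesis of the shorter instance of the lemma is satisfied. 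The arithmetic identities $q^{m-1}\bfn'+(\bfeps_{m-2}\ldots\bfeps_0)_q = q^m\bfn+(\bfeps_{m-1}\ldots\bfeps_0)_q$ and $q^{j''}\bfn'+\bfl''=q^{j'}\bfn+\bfl'$ then turn the inductive statements directly into the desired ones, and the output sum is unchanged because $\delta_{m'-1}=0$.

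If the last transition is a recursion transition, it has empty input and output $\delta_{m'-1}=t_{\bflambda}$ for the appropriate $\bflambda$. In this case the hypothesis of the lemma forces $\bfn\neq 0$. I apply the inductive hypothesis to the path from $(\bfl,j)$ to $(\bfl'',j'')$ (with $L-1$ recursion transitions, the same input word, and the same $\bfn$) to obtain $A^{L-1}(q^j(q^m\bfn+(\bfeps_{m-1}\ldots\bfeps_0)_q)+\bfl)=q^{j''}\bfn+\bfl''$. The defining condition of a recursion transition together with \eqref{eq:recursion-transition-simple-invariant} yields $A(q^{j''}\bfn+\bfl'')=q^{j'}\bfn+\bfl'$ (using $\bfn\neq 0$), which closes the induction for \eqref{eq:transducer-invariant-A}. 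For \eqref{eq:transducer-invariant-a}, well-posedness of \eqref{eq:recursions-d} means the recursion may be legitimately applied to $q^{j''}\bfn+\bfl''$, giving $a(q^{j''}\bfn+\bfl'')=a(q^{j'}\bfn+\bfl')+\delta_{m'-1}$, and combining with the inductive hypothesis completes the step.

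The only delicate point is bookkeeping: tracking at every stage that the application of $A$, and in the second identity of the recursion \eqref{eq:recursions-d}, is legitimate, i.e., that all arguments are non-negative integer vectors. The design of the transducer—in particular the rule that a recursion transition is emitted from $(\bfl,j)$ only when $A(q^j\bfn+\bfl)<\infty$ for every $\bfn\neq 0$—combined with Lemma~\ref{lemma:transducer-non-negativity} and the non-vanishing hypothesis on $\bfn$ in the statement, is exactly what provides this guarantee. Beyond this bookkeeping I do not expect any genuine obstacle: the argument is a routine case-split induction once the roles of the two kinds of transitions are sorted out.
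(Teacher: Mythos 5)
Your proof is correct and takes essentially the same approach as the paper: induction on the number of transitions in the path, with a case split on the type of the last transition, using \eqref{eq:recursion-transition-simple-invariant} for recursion transitions and the definition of storing transitions for the other case. The paper states only the single-transition case in detail and then appeals to induction without spelling out the inductive step; your write-up simply makes that step explicit (including the substitution $\bfn'=q\bfn+\bfeps_{m-1}$ and the observation that the lemma's hypothesis guarantees $\bfn'\neq 0$).
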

\begin{proof}First consider the case that the path consists of a single
  transition. If it is a storing transition, then $L=0$, $m=1$, and all
  assertions follow from the definition and Lemma~\ref{lemma:transducer-non-negativity}.
  On the other hand, if
  the transition is a recursion transition, we have $L=1$, $m=0$, and all
  assertions again follow from the definition, Lemma~\ref{lemma:transducer-non-negativity} and \eqref{eq:recursion-transition-simple-invariant}.

  By induction on the length of the path, we obtain
  \eqref{eq:transducer-invariant-A} and \eqref{eq:transducer-invariant-a}.
\end{proof}

We are now able to prove the finiteness of the accessible part.
 
\begin{lemma}\label{lemma:transducer-is-finite}
  The transducer has a finite number of accessible states.
\end{lemma}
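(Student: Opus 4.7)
The plan is to prove that both the level $j$ and the carry $\bfl$ of every accessible state of $\Ttilde$ lie in bounded sets, so the accessible part is contained in a finite Cartesian product.

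\emph{Bounding the level.} I would produce a threshold $J\ge\kappa$, depending only on $q$, $\kappa$, the $\kappa_\bflambda$, and the $\bfr_\bflambda$, such that whenever $j\ge J$ the recursion-transition condition $A(q^j\bfn+\bfl)<\infty$ for all $\bfn\ge 0$, $\bfn\neq 0$ automatically holds. For $d\ge 2$ the hypothesis $\bfr_\bflambda\ge 0$ makes $A$ always finite, so $J=\kappa$ suffices. For $d=1$, once $j\ge\kappa$ the residue $\lambda=l\bmod q^\kappa$ is determined and $A(q^jn+l)=q^{\kappa_\lambda}s+r_\lambda$ with $s\ge q^{j-\kappa}+\lfloor\lmin/q^\kappa\rfloor$ by Lemma~\ref{lemma:transducer-non-negativity}; requiring this to be non-negative uniformly in $\lambda$ yields a constant $J$. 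Because storing transitions cannot originate at a state of level $\ge J$, and every recursion transition strictly decreases the level by $\kappa-\kappa_\bflambda\ge 1$, no accessible state has level exceeding $J$.

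\emph{Bounding the carry.} Since a recursion transition always lands at a level $\kappa_\bflambda<\kappa\le J$, every maximal run of consecutive storing transitions has length at most $J$. Within one such storing block starting at carry $\bfl_0$ and level $j_0$, each step at level $j$ adds $q^j\bfeps$ with $\bfeps\in\{0,\dots,q-1\}^d$, so the carry is bounded by $\bfl_0+\sum_{j=j_0}^{J-1}q^j(q-1)\bfones\le \bfl_0+(q^J-1)\bfones$. A subsequent recursion transition sends the carry $\bfl$ to $q^{\kappa_\bflambda}\lfloor\bfl/q^\kappa\rfloor+\bfr_\bflambda$, and $\kappa_\bflambda\le\kappa-1$ together with Lemma~\ref{lemma:transducer-non-negativity} yields the componentwise estimate $\bfl/q+R\bfones$, where $R=\max_\bflambda\lVert\bfr_\bflambda\rVert_\infty$. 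Hence one storing-plus-recursion cycle maps an upper bound $L\bfones$ on the carry entering the block to $(L/q+q^{J-1}+R)\bfones$, and any choice $L\ge q(q^{J-1}+R)/(q-1)$ is invariant along cycles; intermediate states within a block then satisfy $\lVert\bfl\rVert_\infty\le L+q^J$. Combined with the lower bound $\bfl\ge 0$ (for $d\ge 2$) or $l\ge\lmin$ (for $d=1$) from Lemma~\ref{lemma:transducer-non-negativity}, the set of admissible carries is finite.

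The two bounds together show that the accessible part of $\Ttilde$ is contained in a finite set. The main obstacle is the $d=1$ case: without the hypothesis $\bfr_\bflambda\ge 0$, storing transitions can persist at levels $j\ge\kappa$, so the storing-block length must be controlled by the larger quantity $J$ rather than by $\kappa$; the level bound from the first step is exactly what keeps the contractive fixed-point argument on the carry intact.
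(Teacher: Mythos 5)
Your proposal is correct and runs along the same lines as the paper's argument: bound the level, then bound the carry by a geometric contraction across blocks. The difference is in the bookkeeping. The paper introduces the ``height'' $\bfh=\bfl q^{-j}$ and obtains the clean inequalities $\tfrac1q\bfh\le\bfh'\le\tfrac1q\bfh+\bfones$ for storing transitions and $\bfh+\bfs^--\bfones\le\bfh'\le\bfh+\bfs^+$ for recursion transitions; composing these over blocks of at most $J$ recursion transitions followed by one storing transition gives a single affine contraction on heights, and boundedness of $j$ then transfers to boundedness of $\bfl=q^j\bfh$. You instead track the carry $\bfl$ directly: the carry grows by at most $q^J-1$ along a maximal storing run and shrinks to $\bfl/q+R$ after a recursion transition, which yields the same fixed-point bound. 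The height change of variable is what lets the paper absorb the level-dependent growth $q^j\bfeps$ into a uniform ``$+\bfones$''; your version exposes that growth explicitly and compensates with the level bound $J$. Also, the paper decomposes paths as (recursions, then one storing) blocks whereas you use (storing block, then one recursion); both decompositions exhaust a general path (possibly with empty sub-blocks), so this is a matter of taste.

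Two small remarks. First, your estimate $q^{\kappa_\bflambda}\lfloor\bfl/q^\kappa\rfloor+\bfr_\bflambda\le \bfl/q+R\bfones$ uses $\bfl\ge 0$; for $d=1$ the carry can be negative (bounded below by $\lmin$), and there you should explicitly note that a negative carry is already bounded, so a slightly enlarged $L$ absorbs that case. Second, your argument implicitly allows empty storing blocks to handle consecutive recursion transitions, which is fine, but it is worth saying so since the carry then undergoes two contractions in a row; the bound you derive remains valid.
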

\begin{proof}
  For a recursion transition from $(\bfl,j)$ to $(\bfl',j')_{N}$, we have
  $j>j'$. Thus, there are no infinite paths consisting only of recursion
  transitions. In particular, there exist no cycles of recursion transitions.

 For $d=1$, let $J\ge\kappa$ be
minimal such that $q^{J-\kappa}\geq-\bigl\lfloor\frac{\lmin}{q^\kappa}\bigr\rfloor-\min_{\lambda}q^{-\kappa_{\lambda}}r_{\lambda}$. Then $A(q^j+l)<\infty$ holds for all accessible states $(l, j)$ with $j\ge J$.
This implies $j\leq J$ for all accessible states $(l,j)$. For $d\geq 2$, we have $j\leq \kappa=:J$ for all accessible states
$(\bfl,j)$. Thus there are at most $J$ consecutive recursion transitions.

To prove that only finitely many states are accessible, we introduce the
  notion of heights of states: The height of a
  state \begin{math}(\bfl,j)\end{math} is defined to be \begin{math}\bfh=\bfl
    q^{-j}\end{math}.  If there exists a storing transition
  from \begin{math}(\bfl,j)\end{math} of height \begin{math}\bfh\end{math} to
  \begin{math}(\bfl',j')_F\end{math} of height \begin{math}\bfh'\end{math}, we
  have $\frac 1q\bfh\leq \bfh'\leq\frac 1q\bfh+\bfones$.  If there exists a recursion transition
  from \begin{math}(\bfl,j)\end{math} of height \begin{math}\bfh\end{math} to
  \begin{math}(\bfl',j')_{N}\end{math} of height \begin{math}\bfh'\end{math}, we
  have $\bfh+\bfs^{-}-\bfones\leq \bfh'\leq\bfh+\bfs^{+}$ where \begin{math}\bfs^{+}=\max_{\bflambda}
    \{\bfr_{\bflambda} q^{-\kappa_{\bflambda}},0\}\end{math}
  and \begin{math}\bfs^{-}=\min_{\bflambda} \{\bfr_{\bflambda}
    q^{-\kappa_{\bflambda}},0\}\end{math}.

Assume that there is a path from $(\bfl,j)$ of height $\bfh$ to
$(\bfl',j')$ of height $\bfh'$ with $L\leq J$ recursion transitions and one
storing transition (in this order). Then we have
\begin{equation*}
\frac
1q\bfh+\frac{J}{q}(\bfs^{-}-\bfones)\leq \bfh'<\frac 1q\bfh+\frac{J}{q}\bfs^{+}+\bfones.
\end{equation*}

We can subdivide every path in the transducer starting with the initial state
into a sequence of such paths and a final path consisting of only recursion
transitions. Let $\bfh_{m}$ be the sequence of heights of the states where the
subpaths starts. Then, we have
\begin{equation*}
\frac
1q\bfh_{m}+\frac{J}{q}(\bfs^{-}-\bfones)\leq \bfh_{m+1}<\frac 1q\bfh_{m}+\frac{J}{q}\bfs^{+}+\bfones.
\end{equation*}
Iteration leads to
\begin{equation*}
  \frac{J(\bfs^{-}-\bfones)}{q-1}\le \bfh_{m}\le \frac{J\bfs^{+}+q\bfones}{q-1}
\end{equation*}
for all $m$.
Therefore, the height $\bfh$ of an
accessible state is bounded. Since \begin{math}0\leq
    j\leq J\end{math} is also bounded, the integer
  carry \begin{math}\bfl=q^{j}\bfh\end{math} of an accessible
  state \begin{math}(\bfl,j)\end{math} can only take finitely many different
  values. The accessible part of the transducer is thus finite.
\end{proof}

\begin{lemma}\label{lemma:simple-state-path}
  Let $\calP$ be an infinite path with input zero starting at some state of level $j$
  such that all of its states have non-negative carries. Then, after at most
  $j$ transitions, it reaches a state $(\bfl_0, \kappa)$. From that point on,
  it only passes through simple states, namely
  \begin{align*}
    (\bfl_0, \kappa), {}&(\bfl_1, j_1)_N, (\bfl_1, j_1+1)_F, \ldots, (\bfl_1, \kappa)_F,\\
    &(\bfl_2, j_2)_N,  (\bfl_2, j_2+1)_F,
    \ldots, (\bfl_2, \kappa)_F,\\ &(\bfl_3, j_3)_N,  (\bfl_3, j_3+1)_F, \ldots,
    (\bfl_3, \kappa)_F,\\
    &\ldots
  \end{align*}
  where  $\bfl_i=A(\bfl_{i-1})$ and $j_{i}=\kappa_{\bfl_{i-1}\bmod q^{\kappa}}$
  for $i\ge 1$.
\end{lemma}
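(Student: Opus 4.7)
The plan is to exploit the dichotomy in $\Ttilde$: each state is the source of either exactly one recursion transition or exactly $q^d$ storing transitions, not both. Along a path with input zero, a storing transition sends $(\bfl, j) \mapsto (\bfl, j+1)_F$ (carry preserved, level grows by one), while a recursion transition from $(\bfl, j) = (q^\kappa \bfs + \bflambda, j)$ requires $j \ge \kappa$ and sends the state to $(q^{\kappa_{\bflambda}}\bfs + \bfr_{\bflambda}, \kappa_{\bflambda} + j - \kappa)_N$, strictly decreasing the level. I would track how $\calP$ moves under these two rules, using the non-negativity assumption on its carries together with the finiteness of the accessible subgraph (Lemma~\ref{lemma:transducer-is-finite}).

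For the first claim, I would argue as follows. If the starting level $j$ is less than $\kappa$, no recursion is available and the path simply climbs by zero-input storing transitions to $(\bfl, \kappa)_F$, preserving the carry; set $\bfl_0 = \bfl$, which is non-negative by hypothesis. If $j \ge \kappa$ the path may begin with one or more recursion transitions, each strictly lowering the level, possibly interleaved with zero-input storing transitions that push the level back up. Since $\calP$ is infinite, storing transitions cannot run unboundedly (by Lemma~\ref{lemma:transducer-is-finite} the levels of accessible states are bounded, so a recursion transition must eventually fire), and recursion transitions cannot run unboundedly either. Accounting for the levels carefully shows that after at most $j$ transitions the path has landed on some $(\bfl_0, \kappa)_F$ with $\bfl_0 \ge 0$.

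From there I would proceed by induction on $i \ge 0$ to show that the path has exactly the stated structure. At the simple state $(\bfl_i, \kappa)_F$, write $\bfl_i = q^\kappa \bfs_i + \bflambda_i$ with $0 \le \bflambda_i < q^\kappa\bfones$. Since $\calP$ is infinite and all its carries are non-negative, it must eventually leave $(\bfl_i, \kappa)_F$ via a recursion transition: zero-input storing alone cannot sustain an infinite path by the level bound, which forces the recursion transition to be present at this state. Its destination is $(\bfl_{i+1}, j_{i+1})_N$ with $\bfl_{i+1} = q^{\kappa_{\bflambda_i}}\bfs_i + \bfr_{\bflambda_i} = A(\bfl_i)$ and $j_{i+1} = \kappa_{\bflambda_i} < \kappa$; the non-negative-carry hypothesis on $\calP$ gives $\bfl_{i+1} \ge 0$. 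No further recursion is available at level $j_{i+1} < \kappa$, so the path climbs via $\kappa - j_{i+1}$ zero-input storing transitions through the states $(\bfl_{i+1}, j_{i+1}+1)_F, \ldots, (\bfl_{i+1}, \kappa)_F$, each simple since the carry stays $\bfl_{i+1} \ge 0$ and the level never exceeds $\kappa$. The induction then resumes at $(\bfl_{i+1}, \kappa)_F$.

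The hardest point will be ruling out pathological behavior of the non-deterministic path --- in particular, justifying that the recursion transition really is available and is taken at every $(\bfl_i, \kappa)_F$, which is equivalent to $A(q^\kappa\bfn + \bfl_i) < \infty$ for every $\bfn \ge 0$ with $\bfn \neq 0$. This follows from combining the infinitude of $\calP$, the non-negative-carry hypothesis, and the level bound from Lemma~\ref{lemma:transducer-is-finite}, but the combination has to be made precise.
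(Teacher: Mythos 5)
Your proposal mirrors the paper's overall strategy (case split on $j$ vs.\ $\kappa$, then induction through the simple states), but it contains a genuine gap in precisely the place you flag at the end, and the flag itself reveals a misconception about what is going on.

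The crucial claim is that whenever $\calP$ sits at $(\bfl_i,\kappa)_F$ with $\bfl_i\ge 0$, a recursion transition \emph{must be available there and is taken immediately}. You attempt to deduce this from ``infinitude of $\calP$, non-negative carries, level bound,'' but those three facts alone do not close the argument. The availability of a recursion transition at $(\bfl,j)$ is the condition $A(q^j\bfn+\bfl)<\infty$ for all $\bfn\ge 0$, $\bfn\neq 0$, and this condition becomes \emph{easier} to satisfy as $j$ grows (with $\bfl$ fixed): writing $\bfl=q^\kappa\bfs+\bflambda$, one has $A(q^j\bfn+\bfl)=q^{j-\kappa+\kappa_\bflambda}\bfn+A(\bfl)$, and increasing $j$ only makes the left-hand side more positive. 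So it is entirely consistent with your three ingredients that $(\bfl_i,\kappa)$ has \emph{no} recursion transition and the path instead climbs to some $(\bfl_i,\kappa+k)$ before the first recursion transition fires; your ``zero-input storing alone cannot sustain an infinite path, which forces the recursion transition to be present at this state'' is a non sequitur. The same issue infects the first claim: your ``possibly interleaved with storing transitions that push the level back up'' is not a pathology that survives to be ruled out by counting levels---it simply never happens, and the bound of $j$ transitions in fact \emph{relies} on its never happening (a few storing steps followed by a recursion and a long climb back to $\kappa$ can easily exceed $j$ transitions).

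What is missing is the use of the invariant \eqref{eq:recursion-transition-simple-invariant} at $\bfn=0$. Concretely: let the path from $(\bfl,j)$ (or $(\bfl_i,\kappa)$) take $k\ge 0$ storing transitions followed by the \emph{first} recursion transition, from $(\bfl,j+k)$ to $(\bfl',j')$. Both $\bfl\ge 0$ and $\bfl'\ge 0$ by hypothesis, so the note after \eqref{eq:recursion-transition-simple-invariant} gives $A(\bfl)=\bfl'<\infty$. But $A(\bfl)<\infty$ together with $j\ge\kappa$ already implies $A(q^j\bfn+\bfl)<\infty$ for every $\bfn\ge 0$ (by the identity above), so a recursion transition is available at $(\bfl,j)$ itself, forcing $k=0$. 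This is the step you need to supply; the three ingredients you list do not imply it without the invariant, which your sketch never invokes.
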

\begin{proof}
  Denote the first state of $\calP$ by $(\bfl, j)$. 

  First, assume that $j\ge \kappa$. As storing transitions always
  increase the level and the levels are bounded by
  Lemma~\ref{lemma:transducer-is-finite}, the path has to contain at least one
  recursion transition. Thus the path starts with $k\ge 0$ storing transitions
  leading from $(\bfl, j)$ to $(\bfl, j+k)$, followed by a recursion transition
  from $(\bfl, j+k)$ to $(\bfl', j')$. By assumption, we have
  $\bfl\ge 0$ and $\bfl'\ge 0$. Thus 
  $A(\bfl)=\bfl'\neq\infty$ by
  \eqref{eq:recursion-transition-simple-invariant}. Therefore, there is a
  recursion transition leaving $(\bfl, j)$, i.e., there were no leading storing
  transitions. Recall that $j'<j$ holds for any recursion transition.
  We repeat the argument at most $j-\kappa$ times until we reach a
  simple state.

  If we are in a simple state $(\bfl', j')$ with $j'<\kappa$, the next $\kappa-j'$
  steps will be storing transitions, leading to $(\bfl', \kappa)$. This means
  that after at most $j$ steps, we reach a state $(\bfl_0, \kappa)$.

  We now apply the argument of the second paragraph again. Thus a recursion
  transition leads to $(\bfl_1, j_1)$ with $\bfl_1=A(\bfl_0)$ and
  $j_1=\kappa_{\bfl_0 \bmod q^\kappa}$.

  The remainder of the lemma follows by induction.
\end{proof}

As an auxiliary structure for deciding the well-posedness of the recursion, we
introduce the \emph{recursion digraph} $\calR$. It has set of vertices $\naturals_0^d$
and arcs $(\bfn, A(\bfn))$ with label $t_{\bfn\bmod q^\kappa}$ for all $\bfn\in \naturals_0^d$ with
$A(\bfn)<\infty$. Thus $a(\bfn)$ can be computed from the successor of $\bfn$
in $\calR$ using the recursion \eqref{eq:recursions-d}. By definition, each
vertex of $\calR$ has out-degree $1$ or $0$. Each component of $\calR$ is a
functional digraph or a rooted tree (oriented towards the root).

If
\begin{equation*}
  \|\bfn\|_{\infty}> \max_{\bflambda}\frac{\|\bflambda\|_{\infty}+\|\bfr_{\bflambda}\|_{\infty}}{q^\kappa-
   q^{\kappa_{\bflambda}}},
\end{equation*}
we have
\begin{equation*}
  q^{\kappa}\|\bfn\|_\infty - \|\bflambda\|_\infty > q^{\kappa_{\bflambda}}
  \|\bfn\|_\infty + \|\bfr_{\bflambda}\|_\infty 
\end{equation*}
and therefore
\begin{equation*}
  \|q^{\kappa}\bfn+\bflambda\|_{\infty} 
  > \|q^{\kappa_{\bflambda}} + \bfr_{\bflambda}\|_\infty 
\end{equation*}
for all $0\leq \bflambda<q^{\kappa}\bfones$. Thus we have
$\|\bfn'\|_\infty<\|\bfn\|_\infty$ for all but finitely many arcs $(\bfn,
\bfn')$ of $\calR$.

Thus for every vertex of $\calR$, there is a unique path starting in this
vertex and leading to a vertex with out-degree $0$ or a finite cycle.

From this description, it is clear that the recursion is well-posed if and only
if
\begin{itemize}
\item the sum of the labels of each cycle in $\calR$ is $0$ and
\item the set $\calI$ consists of one element for every cycle in $\calR$ as well as of the
  vertices with out-degree $0$ in $\calR$.
\end{itemize}

We now prove the essential connection between the recursive digraph and the
transducer $\Ttilde$. This also implies that the classes $F_{1}$, \dots,
$F_{K}$ are disjoint.

\begin{lemma}\label{lemma:transducer-bijection}
  There exists a bijection between cycles in the recursive digraph $\calR$ and
  accessible cycles in the transducer $\Ttilde$ with input $0$ and simple
  states. Corresponding cycles under this bijection have the same output sum
  and sum of labels.
\end{lemma}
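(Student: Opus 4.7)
The plan is to define the bijection explicitly and verify both directions. To a cycle $C\colon\bfn_0\to\bfn_1\to\cdots\to\bfn_{L-1}\to\bfn_0$ in the recursion digraph $\calR$, I would associate the path in $\Ttilde$ obtained by starting at the simple state $(\bfn_0,\kappa)_F$ and following zero-input (recursion and storing) transitions. Lemma~\ref{lemma:simple-state-path} describes this orbit precisely: it visits, in order, the simple $F$-states with carries $\bfn_0,\bfn_1,\dots,\bfn_{L-1}$ at every level from $j_i=\kappa_{\bfn_{i-1}\bmod q^\kappa}$ up to $\kappa$, interspersed with the non-final states $(\bfn_i,j_i)_N$, and it closes at $(\bfn_0,\kappa)_F$ precisely because $A^L(\bfn_0)=\bfn_0$.

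For the inverse direction, given any accessible zero-input cycle in $\Ttilde$ whose $F$-states are simple, I would pick any of its $F$-states of level $\kappa$, call its carry $\bfn_0$, and apply Lemma~\ref{lemma:simple-state-path} again to identify the sequence of level-$\kappa$ $F$-carries around the cycle as $\bfn_0,A(\bfn_0),A^2(\bfn_0),\dots$; its closure yields a cycle in $\calR$. These two constructions are manifestly mutual inverses.

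The main obstacle is proving accessibility of $(\bfn_0,\kappa)_F$ in $\Ttilde$ for every cycle vertex $\bfn_0\in\calR$. In the easy case $\bfn_0<q^{\kappa}\bfones$, reading the $\kappa$-digit $q$-ary expansion of $\bfn_0$ (padded with leading zeros, which is legitimate since accessibility is a digraph property) yields a chain of $\kappa$ successive storing transitions from $(0,0)_F$ to $(\bfn_0,\kappa)_F$. For larger cycle vertices I would construct an explicit input $\bfm\in\naturals_0^d$ whose processing in $\Ttilde$ passes through $(\bfn_0,\kappa)_F$, taking $\bfm$ to be a preimage of $\bfn_0$ under some iterate of $A$ and using invariant~\eqref{eq:transducer-invariant-A} to certify that the state of carry $\bfn_0$ at level $\kappa$ does appear at the right point of the computation; the relevant preimages exist because each cycle vertex has in-degree at least $1$ in $\calR$ (from its cycle predecessor) together with further preimages obtained by varying $\bflambda$ in the decomposition $\bfm=q^{\kappa}\bfs+\bflambda$ with $\bfs=(\bfn_0-\bfr_{\bflambda})/q^{\kappa_{\bflambda}}$. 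Once one state of the associated cycle $\widetilde C$ is known to be accessible, the whole of $\widetilde C$ is accessible via its zero-input orbit.

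Finally, the matching of output sums with arc-label sums is immediate: every storing transition contributes output $0$, while the recursion transition leaving the simple state $(\bfn_{i-1},\kappa)_F=(q^{\kappa}\bfs_{i-1}+\bflambda_{i-1},\kappa)_F$ emits $t_{\bflambda_{i-1}}$, which is by definition the label of the arc $\bfn_{i-1}\to\bfn_i$ in $\calR$. Summing around the cycle yields the asserted equality.
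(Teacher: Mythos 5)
Your overall strategy matches the paper's: use Lemma~\ref{lemma:simple-state-path} to describe the zero-input orbit from a simple state, map a cycle in $\calR$ to the zero-input orbit of $(\bfn_0,\kappa)_F$, and verify the outputs match. That much is sound, and your argument for the output sums is correct.

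The genuine gap is in the accessibility argument. You dispatch the case $\bfn_0<q^{\kappa}\bfones$ cleanly, but the remaining case is hand-waved. There is no guarantee that some vertex on the cycle is bounded by $q^{\kappa}\bfones$: the paper only establishes that some cycle vertex satisfies $\|\bfn\|_\infty\le \max_{\bflambda}(\|\bflambda\|_\infty+\|\bfr_{\bflambda}\|_\infty)/(q^\kappa-q^{\kappa_{\bflambda}})$, a bound which can exceed $q^\kappa$ whenever the $\bfr_\bflambda$ are large. So the ``easy case'' may be vacuous for a given cycle. Your fallback---take a preimage $\bfm$ of $\bfn_0$ under some iterate of $A$ and invoke invariant~\eqref{eq:transducer-invariant-A}---does not close the gap either, for two reasons. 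First, the preimages you describe (cycle predecessors or solutions of $q^{\kappa_\lambda}\bfs+\bfr_\lambda=\bfn_0$) can themselves be large, so the reduction does not terminate. Second, and more seriously, invariant~\eqref{eq:transducer-invariant-A} holds for $\bfn\neq 0$ (or when the last transition reads a non-zero digit), so it cannot be applied naively to conclude that the state reached after processing $\bfm$ followed by zeros has carry $\bfn_0$ or any specific cycle element; what it gives is a statement about $q^{j'}\bfn+\bfl'$ for $\bfn\neq 0$, and extracting the actual carry $\bfl'$ requires a separate argument.

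This is precisely the content of the paper's Statement~\ref{statement:transducer-path-destination}: one reads the $q$-ary expansion of $\bfn_0$ from the initial state (no need to pass through $(\bfn_0,\kappa)_F$ specifically---any cycle state suffices, because the whole cycle is then accessible via the zero-input orbit), applies the invariant with $\bfn=q^M\bfones$, observes that $A$ preserves congruences modulo $q^{M-\kappa}$, and then lets $M\to\infty$ to force equality of the carry with the appropriate $\bfn_R$, using boundedness of accessible carries (Lemma~\ref{lemma:transducer-is-finite}). Your proposal would need this congruence-plus-boundedness argument (or a substitute) to be complete; as written, the accessibility claim for large cycle vertices is not established.
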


\begin{proof}
 Let $\bfn_{0}$, \dots, $\bfn_{L}=\bfn_{0}$ be a cycle in the recursive
 digraph with $\bfn_{R}\geq 0$ for all $0\leq R<L$.

 Let $k_0$ be the length of the path $\calP_0$ in $\Ttilde$ starting in the
 initial state and reading the $q$-ary
 expansion of $\bfn_0$.

 We determine the destinations of certain paths in the transducer associated with
 the cycle in the recursive digraph.

 \begin{statement}\label{statement:transducer-path-destination}
   Let $k\ge k_0$ and $\calP$ be the path from the initial state $(0, 0)$ to
   $(\bfl, j)$ of length $k$ whose input label is the $q$-ary expansion of
   $\bfn_0$, padded with leading zeros. Assume that the number of recursion
   transitions in this path is $LQ+R$ for some $Q\ge 0$ and $0\le R<L$. Then
   $\bfl=\bfn_R\ge 0$.
 \end{statement}
 \begin{proof}[Proof of Statement~\ref{statement:transducer-path-destination}]
   Let $k'=k-(LQ+R)$ be the number of storing transitions of $\calP$.
   By \eqref{eq:transducer-invariant-A}, we have
   \begin{equation}\label{eq:1}
     A^{LQ+R}(q^{k'}\bfn+\bfn_{0})=q^{j}\bfn+\bfl
   \end{equation}
   for $\bfn\geq 0$, $\bfn\neq 0$.

   Note that for $M\ge \kappa$ and $\bfn\equiv\bfn'\pmod{q^M}$ with
   $A(\bfn)<\infty$ and $A(\bfn')<\infty$, the definition of $A$ implies
   $A(\bfn)\equiv A(\bfn') \pmod{q^{M-\kappa}}$. 
   
   Together with the definitions of $\bfn_R$ and the recursive digraph $\calR$
   as well as \eqref{eq:1}, this implies
   \begin{align*}
     \bfn_R&=A^{LQ+R}(\bfn_0)\equiv
     A^{LQ+R}(q^{k'+M}\bfones+\bfn_0)\\
     &=q^{j+M}\bfones+\bfl \pmod{q^{k'+M-(LQ+R)\kappa}}
   \end{align*}
   for sufficiently large $M$. Coarsening yields
   \begin{equation*}
     \bfn_R\equiv \bfl \pmod{q^{M-(LQ+R)\kappa}},
   \end{equation*}
   still valid for sufficiently large $M$. As $\bfl$ is bounded by
   Lemma~\ref{lemma:transducer-is-finite}, this implies $\bfn_R=\bfl$.
 \end{proof}
 Now, we conclude the proof of Lemma~\ref{lemma:transducer-bijection}.

 Let $\calP$ be the infinite path in $\Ttilde$ starting at the destination of $\calP_0$ and
 reading zeros. By Lemma~\ref{lemma:simple-state-path} applied to $\calP$
 together with Statement~\ref{statement:transducer-path-destination} applied to
 $\calP_0$ concatenated with prefixes of $\calP$, $\calP$ leads to a cycle in
 $\Ttilde$.  Its states are simple and have carries $\bfn_0$, \ldots,
 $\bfn_{L-1}$ and levels determined by $\bfn_0$, \ldots, $\bfn_{L-1}$ as in
 Lemma~\ref{lemma:simple-state-path}.

 This construction defines a map from the cycles of the recursive digraph $\calR$ to
 the accessible cycles with input $0$ in the transducer with simple states.
 This map is injective by construction. Under this map, the sum of the labels
 of the cycle in $\calR$ equals the sum of output labels of the cycle in
 $\Ttilde$ by construction.

 On the other hand, let
  \begin{align*}
    &(\bfn_0, j_0),  (\bfn_0, j_0+1), \ldots, (\bfn_0, \kappa),\\
    &(\bfn_1, j_1), (\bfn_1, j_1+1), \ldots, (\bfn_1, \kappa),\ldots\\
    &(\bfn_{L-1}, j_{L-1}),  (\bfn_{L-1}, j_{L-1}+1), \ldots, (\bfn_{L-1},
    \kappa),\\
    &(\bfn_0, j_0)
  \end{align*}
 be an accessible cycle of simple states in the transducer with input $0$. 
 Lemma~\ref{lemma:simple-state-path} yields
 $A(\bfn_{R})=\bfn_{R+1\bmod L}\geq 0$ for $0\leq R<L$. Thus, this cycle in the transducer is
 the image of the cycle $\bfn_{0}$, \dots,
 $\bfn_{L}=\bfn_{0}$ in the recursive digraph. Thus the map is surjective.
\end{proof}

To use Theorem~\ref{thm:asydist}, we simplify $\Ttilde$ to obtain the deterministic transducer $\T$, that is
one without transitions with empty input.
As a first step, we
remove all non-accessible states. By
Lemma~\ref{lemma:transducer-is-finite},
this leaves us with finitely many states.

By Lemma~\ref{lemma:transducer-is-finite} and the fact that recursion
transitions decrease the level, the length of paths consisting of recursion
transitions only is bounded.
As a recursion transition always leads to a non-final state, processing an input
never ends with a recursion transition. 

Consider a recursion transition from $(\bfl,
j)$ to $(\bfl', j')_N$ with output $t$ such that no recursion transition
originates in $(\bfl', j')_N$. For each transition originating in
$(\bfl', j')_N$, say to some $(\bfl'',j'')_F$ with input $\bfeps$ and output $t'$, we
insert a storing transition from $(\bfl, j)$ to $(\bfl'', j'')_F$ with input
$\bfeps$ and output $t+t'$. Then, the recursion transition from $(\bfl, j)$ to $(\bfl',
j')_{N}$ is removed. The number of recursion transitions decreased by one and the new
transducer generates the same output as the old transducer.
We repeat this process until there are no more recursion transitions. Then, all
non-final states are inaccessible and are removed.

\begin{proof}[Proof of Theorem~\ref{thm:recursion}]
  By Lemma~\ref{lemma:transducer-bijection} and the characterization of
  well-posedness via the recursive digraph, the recursion
  \eqref{eq:recursions-d} is well-posed if and only if $\calI$ consists of
  exactly one representative of each of the sets $F_j$, 
  $1\le j\le K$, and if $\Ttilde$ has no cycle with simple states, input $0$
  and non-vanishing output sum.

  We now show that the cycles of simple states with input $0$ in $\T$ are exactly the
  reductions of the cycles of simple states with input $0$ in $\Ttilde$.
  As a cycle with simple states and input $0$ in $\Ttilde$ does not have
  consecutive recursion transitions (cf.\ Lemma~\ref{lemma:simple-state-path}),
  it is reduced to a cycle with simple states in $\T$. On the other hand,
  consider a cycle
  of $\Ttilde$ with input $0$ containing a non-simple state. If there is a
  state of level $>\kappa$, the state with largest level is final and is not
  removed. If all states have level $\le \kappa$, then there are no two
  consecutive recursion transitions, so no negative carry is completely removed
  from the cycle in the reduction to $\T$. Therefore, such a cycle is not
  reduced to a cycle with simple states and input $0$ in $\T$.

  Therefore, the assertion on well-posedness is proved.

  To prove correctness of the transducer, we
  use~\eqref{eq:transducer-invariant-A} with $(\bfl, j)=(0, 0)$, the joint
  $q$-ary expansion of $\bfn$ as input leading to some state $(\bfl', j')_F$ with
  output $\delta_{m'-1}\ldots \delta_0$. By
  Lemma~\ref{lemma:transducer-non-negativity}, we have $\bfl'\ge 0$ because the
  last transition is a storing transition with non-zero input. Thus by \eqref{eq:transducer-invariant-a},
  $a(\bfn)=a(\bfl')+\sum_{k=0}^{m'-1} \delta_k$. As the final output of
  $(\bfl', j')_F$
  is defined to be $a(\bfl')$, we obtain
  $\T(\bfn)=a(\bfl')+\sum_{k=0}^{m'-1} \delta_k=a(\bfn)$, as requested.
\end{proof}

\bibliographystyle{amsplain}
\bibliography{bib}

\end{document}
